\providecommand{\R}{\mathbb{R}}
\providecommand{\N}{\mathbb{N}}
\providecommand{\Z}{\mathbb{Z}}
\providecommand{\rats}{\overset{t.s.}{\longrightarrow}}
\providecommand{\eps}{\epsilon}
\newcommand{\spaceHn}{\mathcal{H}_{n}}
\newcommand{\spaceHndiv}{\mathcal{H}_{n,\mathrm{div}}}
\newcommand{\spaceV}{\mathcal{V}}
\newcommand{\x}{\bar{x}}
\newcommand{\ueps}{u^{\epsilon}}
\newcommand{\peps}{p^{\epsilon}}
\newcommand{\oem}{\Omega_{\epsilon}^M}
\newcommand{\weps}{w_{\epsilon}}
\newcommand{\veps}{v^{\epsilon}} 
\newcommand{\bxfxe}{\left(\bar{x},\dfrac{x}{\epsilon}\right)}
\newcommand{\foe}{\dfrac{1}{\epsilon}}
\newcommand{\tweps}{\tilde{w}_{\epsilon}}
\DeclareMathOperator{\diver}{div}
\DeclareMathOperator{\Span}{Span}
\let\div\diver
\newtheorem{thm}{Theorem}[section]
\newtheorem{lem}[thm]{Lemma}
\newtheorem{prop}[thm]{Proposition}
\newtheorem{rem}[thm]{Remark}
\newtheorem{dfn}[thm]{Definition}
\newtheorem{example}[thm]{Example}
\title{Homogenization and dimension reduction of the Stokes-problem with Navier-Slip condition in thin perforated layers}
\date{}
\author{J. Fabricius$^{*}$ and M. Gahn$^{**}$}
\begin{document}

\maketitle

\begin{abstract}
 We study a Stokes system posed in a thin perforated layer with a Navier-slip condition on the internal oscillating boundary from two viewpoints: 1) dimensional reduction of the layer and 2) homogenization of the perforated structure. Assuming the perforations are periodic, both aspects can be described through a small parameter $\eps>0,$ which is related to the thickness of the layer as well as the size of the periodic structure. By letting $\eps$ tend to zero, we prove that the sequence of solutions converges to a limit which satisfies a well-defined macroscopic problem. More precisely, the limit velocity and limit pressure satisfy a two pressure Stokes model, from which a Darcy law for thin layers can be derived. Due to non-standard boundary conditions, some additional terms appear in Darcy's law.
\end{abstract}
\vspace{3em}

\noindent\textbf{Keywords: }Thin domain, porous medium, Korn inequality, slip boundary condition, restriction operator, Darcy's law.
\\

\noindent\textbf{MSC classification: }35B27, 76M50, 76A20.

\let\thefootnote\relax\footnotetext{$^{*}$Department of Engineering Sciences and Mathematics, Lule{\aa} University of Technology, 97187 Lule{\aa}, Sweden, john.fabricius@ltu.se.}

	\vspace{.5mm}

\let\thefootnote\relax\footnotetext{$^{**}$Interdisciplinary Center for Scientific Computing, University of Heidelberg, Im Neuenheimer Feld
	205, 69120 Heidelberg, Germany, markus.gahn@iwr.uni-heidelberg.de.}

\section{Introduction}\label{sec:intro}

The present work is concerned with the mathematical derivation of Darcy's law for incompressible newtonian flow in thin periodically perforated layers. On the inner boundary of the layer we assume a Navier-slip condition, and on the external boundary a normal stress boundary condition. Let $\Omega^\eps$ denote a domain in $\mathbb{R}^n$ ($n\geq 2$), to be specified below. The fluid velocity $\ueps : \Omega^{\epsilon} \rightarrow \R^n$ and the fluid pressure $\peps:\Omega^{\eps} \rightarrow \R$ are assumed to be governed by the Stokes system:
\begin{equation}\label{Stokes:eq}
\left\{
\begin{aligned}
-\nabla\cdot\bigl(-p^\eps I+2\mu D(u^\eps)\bigr)& =f^\eps && \text{in }\Omega^\eps\\
\nabla\cdot u^\eps& =0 && \text{in }\Omega^\eps,\\
\end{aligned}
\right.
\end{equation}
where the constant $\mu>0$ is the fluid viscosity,
\begin{align*}
    D(u^\eps):=\frac{1}{2}\left(\nabla u^\eps+(\nabla u^\eps)^T\right)
\end{align*}
is the symmetric part of the velocity gradient (or strain-rate tensor) and $f^\eps$ is a given function representing force per unit volume.

The fluid domain $\Omega^\eps$ is a periodically perforated layer with thickness of order $\epsilon$. It can be described by  scaling and periodic repetition of a reference cell $Y$ which is partitioned into a solid phase $Y_s$ and a fluid phase $Y_f$ (a precise definition is given in Section~\ref{Micro:sec}). Further, we denote the intersection of the fluid part $\overline{Y_f}$ with the upper resp. lower boundary of $Y$ by $S_N^+$ resp. $S_N^-$.
We divide the boundary $\partial \Omega^{\epsilon}$ into two parts, an external part $\Gamma_N^{\epsilon}$ and an internal boundary $\Gamma_D^{\epsilon}$ (the interface with the solid perforations in the thin layer). 
For the Stokes system \eqref{Stokes:eq} we impose on the interior boundary of the perforated layer a Navier-slip condition,
\begin{equation}\label{slip:bc}
\left\{
\begin{aligned}
u^\eps \cdot\hat{n}& =0&& \text{on }\Gamma_D^\eps\\
2\mu [D(u^\eps) \hat{n} ]_{\tau} + \alpha \epsilon^{\gamma} u^\eps &= g^\eps && \text{on } \Gamma_D^\eps
\end{aligned},
\right.
\end{equation}
where $g_\eps$ is a given function that represents a shear stress (force per unit area) and $\alpha,$ $\gamma$ are given constants. As usual, $\hat{n}$ denotes the outward unit normal and $z_\tau $ denotes the tangential component of a vector $z \in \R^n$ relative to $\hat{n}$, i.e., $z_\tau = z - (z\cdot \hat{n})\hat{n}$. Further, we have $\alpha \geq 0$ and $\gamma \in \R$.

From a physical point of view the reciprocal of $\alpha \epsilon^{\gamma}$ is the slip length, and the parameter $\gamma$ is a measure for the slip of the the fluid at the inner boundary. For example, the Navier-slip condition describes an effective interface condition between a free fluid and a porous medium, known as the Beavers-Joseph condition, which was derived experimentally in \cite{BeaversJoseph1967}, and rigorously from first principles for a flat interface in \cite{mikelic2000interface}. The parameters $\alpha$ and $\gamma$ are related to the porosity of the porous medium, and especially fit to the cases $\gamma \geq -1$, see \cite[Proposition 8]{mikelic2000interface}. The case $\gamma<-1$ corresponds to the slip length being negligible. More precisely, when $\gamma<-1,$ we shall see that condition \eqref{slip:bc} becomes a ``true'' no-slip in the limit as $\eps$ tends to zero, i.e. all components of the velocity vanishes. For an in-depth discussion of slip vs. no-slip see e.g. \cite{Bazant2008} and \cite[Ch~6.4]{Panton2013}.

On the exterior boundary we prescribe the stress vector (traction) as a normal stress, i.e.
\begin{equation}\label{traction:bc}
\bigl(-p^\eps I+2\mu D(u^\eps)\bigr)\hat{n} =-p_{b}^\eps\,\hat{n}\quad\text{on }\Gamma_N^\eps,
\end{equation}
where the function $p_b^\eps$ represents an external pressure, which is caused for example from the mechanical contact of the thin layer with a surrounding fluid. By letting $\epsilon$ tend to zero we study the convergence of the solutions $(u^\eps,p^\eps)$ and show that the limit can be characterized by a triple $(u^0,p^0,p^1)$ which is the unique solution of a two-scale homogenized problem, also called the two pressure Stokes model.  Both the dimensional reduction and the homogenization is achieved by the well-known two-scale convergence method. The limit pressure $p^0$ satisfies the $(n-1)$-dimensional Darcy law including some additional terms whose origins can be traced to the boundary conditions. More precisely, the models differ with regard to the following cases:
\begin{itemize}
    \item $\alpha >0$ and $\alpha = 0$,
    \item $\gamma <-1$, $\gamma = -1$, and $\gamma >-1$,
    \item $S_N^{\pm} = \emptyset $ and $S_N^{\pm} \neq \emptyset$.
\end{itemize}

The first rigorous derivation of Darcy's law for periodically perforated domains was given by Tartar in \cite{SanchezPalencia1980} who considered the system \eqref{Stokes:eq}, where $\Omega^\eps$ is defined by periodic repetition in all coordinate axis directions and scaling of the cell $Y=(0,1)^n,$ together with the boundary condition $u^\eps=0$ on $\partial\Omega^\eps.$ To prove a $L^2$-bound for the pressure Tartar constructed a so called \emph{restriction operator} for velocity fields. This locally defined operator combined with a duality argument, gives a local extension of the pressure. The uniform bound is inferred from the extended pressure. An explicit formula for the pressure extension was later obtained by Lipton and Avellaneda \cite{lipton1990darcy}.
For another proof for the derivation of Darcy's law not using the restriction operator, and also some more discussions about related literature, we refer to \cite{chechkin2007homogenization}, where a uniform bound for the pressure is given directly using a Bogovski\u{\i} operator in perforated domains.
To construct the restriction operator, Tartar assumed that $Y_s$ is compactly contained in $Y$, i.e. it must be possible to choose the reference cell so that $\mathrm{dist}\,(\partial Y,\partial Y_s)>0$. In particular, the construction fails when the solid part is connected. However, by modifying Tartar's proof Allaire \cite{allaire1989homogenization} was able to define a restriction operator for a periodic porous medium with connected solid phase. To retain all the properties of Tartar's operator, Allaire involved a global projection operator in the construction. In this work, we simplify Allaire's construction by showing that some properties of the restriction operator can be relaxed without compromising its objective. More precisely, we construct a local restriction operator for thin layers that provides a bound for the pressure $p^\eps$ in $L^2(\Omega_\epsilon)$ which is independent of the parameter $\epsilon.$

Another novelty of the present study consists in the choice of boundary conditions.
Models with similar boundary conditions in a periodically perforated domain with strict inclusions of size $a_{\epsilon}$ much smaller than $\epsilon$ were considered by Allaire in \cite{allaire1991homogenization} in the case $\alpha > 0$. A similar model with holes of size $\epsilon$ was studied by Cioranescu et~al.  in \cite{cioranescu1996homogenization}. However, as mentioned in \cite{capatina2011homogenisation}, there was a mistake in the choice of oscillating test-functions, which fulfill a no-slip condition. In \cite{capatina2011homogenisation} they addressed this problem again, but it seems that there is also an error in the case $\gamma < -1$, where they state that the Darcy-velocity becomes zero. More precisely, it is claimed that the unfolded sequence of the traces of the microscopic fluid velocities converge to the Darcy velocity, which is in general not true. In fact, we show that for $\gamma < -1$ the two-scale limit of the microscopic velocity fulfills a no-slip boundary condition instead of the zero normal flux boundary condition.
 In contrast to our paper and \cite{allaire1991homogenization}, where a stress of the form $\peps I - 2\mu D(\ueps)$ including the symmetric gradient is used, in \cite{cioranescu1996homogenization,capatina2011homogenisation} they use the stress $\peps I - \nabla \ueps$. Hence, in our weak formulation we obtain integrals including the symmetric gradient, which makes the use of a Korn inequality necessary to establish coercivity of the microscopic problem. The essential difference to the previously mentioned works is that we consider a thin perforated layer, leading to simultaneous dimension reduction and homogenization, as well as the critical case $\alpha = 0$.  Additionally, we consider the normal traction condition \eqref{traction:bc}. The corresponding homogenization result for the Stokes system in a periodically perforated domain with a no slip condition on the internal oscillating boundary can be found in \cite{fabricius2017homogenization}. 
We emphasize, that in our paper we especially consider connected solid phases, leading to additional technical difficulties.

For the derivation of the macroscopic model we make use of the two-scale convergence in thin domains, which was first introduced by \cite{MarusicMarusicPalokaTwoScaleConvergenceThinDomains} for homogeneous layers and later extended to perforated layers in \cite{NeussJaeger_EffectiveTransmission}. Two-scale compactness results are based on $\epsilon$-uniform \textit{a priori} estimates for the microscopic solution, which also guarantee existence and uniqueness of a weak solution for the micro-model. Since the associated variational equation includes symmetric gradients, a Korn inequality with explicit dependence on the scaling parameter $\epsilon$ is necessary. In \cite[Lemma 2.4]{allaire1991homogenization} a Korn inequality with an additional boundary term was shown, without using the normal zero trace. However, this result is only enough to deal with the case $\alpha > 0$ and $\gamma$ small enough. To overcome this problem we prove an $\epsilon$-uniform Korn inequality for functions with normal trace equal to zero, where we need an additional assumption on the geometry of the solid phase $Y_s$ to  avoid cylindrical inclusions touching the upper and lower boundary of $Y$ or its lateral boundary. An additional challenge is to obtain a uniform $L^2$-estimate for the pressure. For this we construct a restriction operator for perforated thin layers including a connected solid phase, where we were able to simplify some technical parts from the proof of \cite{allaire1989homogenization}.
With the \textit{a priori} estimates for the fluid velocity and pressure, we are able to pass to the limit $\epsilon \to 0$ in the microscopic problem by applying general two-scale compactness result. It is well known in the homogenization theory for the Stokes problem, that in the two-scale limit a so called two pressure homogenized model with weak solution $(u^0,p^0,p^1)$ is obtained. To identify the two pressures $p_0$ and $p_1$, an orthogonality argument can be used, see \cite[Lemma 1.5]{allaire1997one} or \cite[Lemma 14.3]{chechkin2007homogenization} for the case of a no slip boundary condition. We generalize this result to the case of thin layers and normal zero boundary conditions. More precisely,  denoting by $L^2(\Sigma,\spaceHn)$ the space of functions in  $L^2(\Sigma, H^1(Y_f)^n)$ with vanishing normal trace and $(0,1)^{n-1}$-periodic,  we show that every functional on $L^2(\Sigma,\spaceHn)$ which vanishes on the subspace of solenoidal vector fields with respect to the micro- and macro-variable (the latter after averaging over the reference element $Y_f$), can be decomposed into the sum of the gradients of the two pressures $\nabla_{\x} p^0$ and $\nabla_y p^1$. The two pressure Stokes model includes all necessary information for the macroscopic and microscopic scale to obtain the Darcy-law for the pressure $p^0$. In summary, we have the following novel contribution in our paper:
\begin{itemize}
    \item Korn inequalities for vector fields with vanishing normal zero trace and explicit dependence on scaling parameter $\epsilon$.
    \item Restriction operator for thin perforated layers with a connected solid phase.
    \item Two pressure decomposition for functionals on the space $L^2(\Sigma,\spaceHn)$.
    \item Derivation of effective models and associated Darcy-laws for the cases $\alpha = 0$ and $\alpha > 0$, and the different choices of $\gamma \in \R$.
\end{itemize}

The paper is organized as follows. In Section~\ref{Micro:sec} we give a detailed description of the microscopic model including assumptions on the given data and model parameters. The macroscopic model and the main convergence result is stated in Section~\ref{Main:sec}. Existence and uniqueness of solutions to the micro problem as well as \textit{a priori} estimates are proven in Section~\ref{SectionExistenceApriori}. To obtain a uniform bound for the pressure, we construct a restriction operator similar to the operator defined in \cite{allaire1989homogenization}, but with weaker properties and a different proof. The construction relies heavily on the so called Bogovski\u{\i} operator recalled in Appendix~\ref{SectionAppendixBogovskii}. The splitting of the pressure in two parts $p^0$ and $p^1$ in the macro model can be seen as a decomposition of functionals, which is proven in Section~\ref{SectionTwoPressureDecomposition}. The macro model is derived in Section~\ref{Derivation:sec} based on two-scale convergence for thin layers. 
This notion of convergence is defined in Appendix~\ref{SectionTSConvergence} and some basic compactness results are given.

\section{The microscopic model}\label{Micro:sec}

In this section we elaborate further on the microscopic model and its underlying geometry. As mentioned above, we consider a thin perforated layer consisting of a connected fluid part and a fixed solid part, which is in general also connected. The fluid flow is governed by the Stokes system, where on the fluid-solid interface we consider a Navier-slip boundary condition.

\subsection{The microscopic geometry and notations}\label{sec:not}

We give now the precise definition of the domain $\Omega^\eps.$ Let $E_s$ denote an unbounded non-empty open subset of the layer $\R^{n-1}\times (-1,1).$ The set $E_s,$ called the solid phase, is assumed to be invariant under integer translations parallel to the first $(n-1)$ coordinate axes, i.e.
\[
x\in E_s\implies x+k\in E_s
\]
for all $k\in \Z^{n-1}\times \{0\}.$ The fluid phase of the layer is denoted as
\[
E_f:=\R^{n-1}\times (-1,1)-\overline{E_s},
\]
which is assumed to be a (unbounded) Lipschitz domain in $\R^n.$ Consequently, the reference layer cell
\begin{equation*}
Y:=(0,1)^{n-1} \times (-1,1),
\end{equation*}
can be partitioned into a solid phase $Y_s:=Y\cap E_s$ and a fluid phase $Y_f:=Y\cap E_f.$ We assume that both $Y_f$ and $Y_s$ are open. The boundary with zero normal flux condition of $Y_f$ is defined as
\begin{equation*}
\Gamma_D=Y\cap \partial Y_s
\end{equation*}
so that
\[
Y=Y_f\cup Y_s\cup \Gamma_D,\quad Y_f\cap Y_s=\emptyset,
\]
see Figure~\ref{cell:fig} for an example. 
In addition we assume that $Y_f$ is a Lipschitz domain.
Further, we define the upper and lower boundary of the cell $Y$ by 
\begin{align*}
    S^{\pm} := (0,1)^{n-1} \times \{\pm 1\}.
\end{align*}
Now, we denote the upper and lower part of the boundary of $Y_f$ by
\begin{align*}
    S^{\pm}_N:= \mathrm{int} \left( S^{\pm} \cap \partial Y_f \right).
\end{align*}
\begin{figure}[ht]\label{cell:fig}
\centering
\includegraphics[height=5cm]{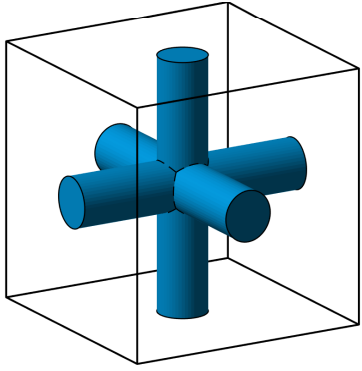}
\caption{Reference layer cell $Y$ in $\R^3$ with solid phase $Y_s$ (colored area)}
\end{figure}

\begin{figure}[ht]\label{layer:fig}
\centering
\includegraphics[height=5cm]{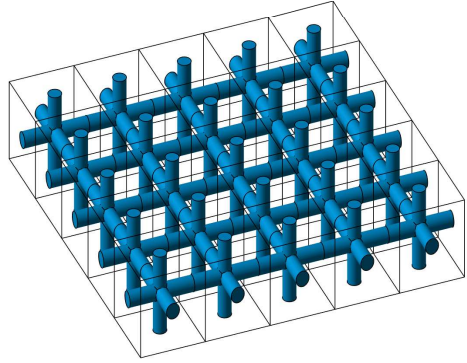}
\caption{Thin layer $\Omega^\eps_M$ in $\R^3$ with solid phase (colored area) connected}
\end{figure}
We will see that we have to distinguish the two cases $S^{\pm}_N$ empty or not, and obtain different homogenized problems or more precisely different kind of effective coefficients. We emphasize that both cases are important for applications. Considering only a single layer would lead to a fluid which is enclosed by the solid ($S^{\pm}_N$ is empty). In many applications, for example transport problems through membranes, the layer is coupled to regions above or below, leading to the case $S^{\pm}_N \neq \emptyset$. However, in this case the situation gets more complicated, since one has to take into account additional coupling conditions to other regions.

For given constants $L_1,L_2,\dotsc,L_{n-1} \in \N,$ let $\Sigma$ denote the fixed domain in $\mathbb{R}^{n-1}$ defined by
\begin{equation*}
\Sigma:=(0,L_1) \times \dotsb \times (0,L_{n-1}).
\end{equation*}
Choosing $0<\eps\ll 1$ so that $\epsilon^{-1} \in \N,$ we obtain a thin layer 
\begin{equation*}
\Omega^\eps_M:=\Sigma\times (-\eps,\eps).
\end{equation*}
The domain $\Omega^\eps$ is then defined as the fluid phase of $\Omega_M^\eps,$ i.e. 
\begin{equation*}
\Omega^\eps:=\Omega_M^\eps\cap \eps E_f.
\end{equation*}
Alternatively, $\Omega^\eps$ could be defined by periodic repetition and uniform scaling of the set $Y_f,$ i.e.
\[
\Omega^\eps:=\mathrm{int}\,\left(\bigcup_{k} \overline{\eps(Y_f+k)}\right),
\]
where the union is taken over all $k\in \mathbb{Z}^{n-1}\times \{0\}$ such that $\eps(Y+k)\subset \Sigma_M^\eps.$ The boundary of $\Omega^\eps$ is divided into two disjoint sets $\Gamma_D^\eps$ and $\Gamma_N^\eps,$ where 
\begin{align*}
\Gamma_N^\eps& :=\mathrm{int}(\partial\Omega^\eps\cap \partial\Omega_M^\eps) && \text{(exterior boundary)}\\
\Gamma_D^\eps& :=\partial\Omega^\eps \cap \Omega_M^\eps && \text{(interior boundary)}.
\end{align*}
Note that the exterior boundary $\Gamma_N^\eps$ may have holes of solid phase.
Obviously, we have
\[
\lvert \Omega^\eps\rvert=\mathcal{O}(\eps),\quad \lvert \Gamma_D^\eps\rvert= \mathcal{O}(1),\quad \lvert \Gamma_N^\eps\rvert= \mathcal{O}(1)\quad \text{as $\eps\to 0.$}
\]
Obviously, we assume that $\Omega_\epsilon$ is connected. It is important to note that the solid phase $E_s$ may also be connected, which allows physically realistic structures to be considered, as shown in Figure~\ref{layer:fig}.

Finally, let us introduce some basic function spaces and notations. For an open set $U\subset \R^m$ with $m \in \N$ we denote by $L^2(U)$ the usual Lebesgue space, and by $H^1(U)$ the Sobolev space of functions in $L^2(U)$ with weak derivatives in $L^2(U)$.
We also introduce the space of periodic functions with respect to the first $(n-1)$ components (the lateral boundary of the reference element $Y$):
\begin{align}\label{SpaceZeroTracesPartBoundary}
    H_{\#}^1(Y_f) := \left\{ u \in H^1(Y_f) \, : \, u \mbox{ is } (0,1)^{n-1}\mbox{-periodic}\right\}.
\end{align}
If $U$ is a bounded Lipschitz domain and $\Gamma \subset \partial U$, then we define the space of Sobolev functions with zero traces on $\Gamma$ by
\begin{align*}
    H^1(U,\Gamma):= \left\{ u \in H^1(U) \, : \, u = 0 \mbox{ on } \Gamma \right\}.
\end{align*}
For $x \in \Omega_M^{\epsilon}$ we denote the first $(n-1)$ components with  $\x:=(x_1,\ldots,x_{n-1})^T$.
For a function $p \in H^1(\Sigma)$ we denote its gradient by $\nabla_{\x}  p \in L^2(\Sigma )^{n-1}$ and also consider this function as an element of $L^2(\Sigma)^n$ via the natural embedding $ (\partial_1 p ,\ldots, \partial_{n-1} p,0)^T$. We use the same notation $\nabla_{\x} p$ for the element in $L^2(\Sigma)^{n-1}$ and $L^2(\Sigma)^n$ and it should be clear from the context in which sense it has to be understood.  Further, for a vector field $v \in L^2(\Sigma)^n$ we define the weak divergence with respect to $\x$ as the function $\nabla_{\x} \cdot v \in L^2(\Sigma)$ (if it exists) such that for all $\phi \in C_0^{\infty}(\Sigma)$ it holds that
\begin{align*}
    \int_{\Sigma} \nabla_{\x} \cdot v \phi d\x = - \int_{\Sigma} v \cdot \nabla_{\x} \phi d\x.
\end{align*}
Obviously, the $n$-th component of $v$ has no influence on the divergence with respect to $\x$.

\subsection{The microscopic model}

We pose the following boundary value problem for the velocity field $u^\eps$ and the pressure field $p^\eps$ in $\Omega^\eps.$
\begin{align}
\label{MicroModelNavierSlip}
\left\{
\begin{aligned}
-\nabla\cdot \left(-p^\eps I+2\mu D(u^\eps)\right)& =f^\eps && \text{in }\Omega^\eps\\
\nabla\cdot u^\eps& =0 && \text{in }\Omega^\eps\\
u^\eps \cdot\hat{n}& =0&& \text{on }\Gamma_D^\eps\\
2\mu [D(u^\eps) \hat{n} ]_{\tau} + \alpha \epsilon^{\gamma} u^\eps &= g^\eps && \text{on } \Gamma_D^\eps \\
\left(p^\eps I-2\mu D(u^\eps)\right)\hat{n}& =p_{b}^\eps\,\hat{n} && \text{on }\Gamma_N^\eps,
\end{aligned}
\right.
\end{align}
where $\alpha\geq 0$ and $\gamma \in \R$. Appropriate assumptions on the given data $f^\eps,$ $g^\eps$ and $p_b^\eps$ are stated below.

\begin{rem}
In the critical case $\alpha = 0 $ the structure of the problem $\eqref{MicroModelNavierSlip}$ changes significantly. For positive $\alpha$ the coercivity of the associated bilinear form can be  obtained from the boundary term including $\alpha \epsilon^{\gamma} \ueps$, see \cite{allaire1991homogenization} for more details. For the case $\alpha = 0$ we need a Korn inequality for fields with zero normal trace, see Lemma \ref{LemmaKornInequality} below.
\end{rem}

We have the following weak formulation for the micro model $\eqref{MicroModelNavierSlip}$:
Find $(u^\eps,p^\eps) \in H^1(\Omega^\eps)^n \times L^2(\Omega^\eps)$ with $\nabla \cdot u^\eps = 0$ and $u^\eps \cdot \hat{n} = 0$ on $\Gamma_D^\eps$, such that for all $\phi^\eps \in H^1(\Omega^\eps)^n$ with $\phi^\eps \cdot \hat{n} = 0$ on $\Gamma_D^\eps$ it holds that
\begin{align*}
    2\mu \int_{\Omega^\eps} & D(u^\eps) : D(\phi^\eps) dx + \alpha \epsilon^\gamma \int_{\Gamma_D^\eps} u^\eps \cdot \phi^\eps d\sigma - \int_{\Omega^\eps} p^\eps \nabla \cdot \phi^\eps dx 
    \\
    &= \int_{\Omega^\eps} f^\eps \cdot \phi^\eps dx + \int_{\Gamma_D^\eps} g^\eps \cdot \phi^\eps d\sigma - \int_{\Gamma_N^\eps} p_b^\eps \hat{n} \cdot \phi^\eps d\sigma.
\end{align*}
Under the assumption that $p_b^\eps$ is the trace of a function defined in $\Omega^\eps$, we can write
\begin{align}
    \begin{aligned}
    \label{MicroModelNavierSlipVarEqu}
    2\mu \int_{\Omega^\eps} & D(u^\eps) : D(\phi^\eps) dx + \alpha \epsilon^\gamma \int_{\Gamma_D^\eps} u^\eps \cdot \phi^\eps d\sigma - \int_{\Omega^\eps} (p^\eps - p_b^\eps) \nabla \cdot \phi^\eps dx 
    \\
    &= \int_{\Omega^\eps} (f^\eps - \nabla p_b^\eps) \cdot \phi^\eps dx + \int_{\Gamma_D^\eps} g^\eps \cdot \phi^\eps d\sigma .
    \end{aligned}
\end{align}

\noindent\textbf{Assumptions on the data:}

\begin{enumerate}[label = (A\arabic*)]
\item $f^\eps \in L^2(\Omega^\eps)^n$ with $f^\eps \rats f^0$ for $f^0 \in L^2(\Sigma)^n$, see Section \ref{SectionTSConvergence} in the appendix for the definition of the two-scale convergence. Especially, it holds that
\begin{align*}
    \frac{1}{\sqrt{\epsilon}} \Vert f^\eps\Vert_{L^2(\Omega^\eps)} \le C.
\end{align*}

\item\label{AssumptionPressure} It holds that $p_b^\eps \in H^1(\Omega^\eps)^n$ with 
\begin{align*}
    \frac{1}{\sqrt{\eps}} \Vert p_b^\eps \Vert_{H^1(\Omega^\eps)} \le C.
\end{align*}
Further, there exists $p_b \in H^1(\Sigma)$ and $p_b^1 \in L^2(\Sigma,H_{\#}^1(Y_f))$, such that
\begin{align*}
    p_b^{\eps} &\rats p_b,
    \\
    \nabla p_b^\eps &\rats \nabla_{\x} p_b + \nabla_y p_b^1.
\end{align*}
Additional we assume that the pressure $p_b^1$ admits a separation of the form $p_b^1(\x,y) = p_{b,\Sigma}^1(\x) p_{b,N}^1(y)$ with $p_{b,\Sigma}^1 \in L^2(\Sigma)$ and $p_{b,N}^1 \in H^1(Y_f)$.

\item We have $g^\eps \in L^2(\Gamma_D^\eps)^n$ with $\eps^{-1} g^\eps \rats g^0$ for $g^0 \in L^2(\Sigma \times \Gamma_D)^n$ with $g^0(\x,y) = g_{\Sigma}(\x) g_{\Gamma_D}(y)$ and $g_{\Sigma} \in L^2(\Sigma)$, $g_{\Gamma_D} \in L^2(\Gamma_D)^n$. Especially, it holds that
\begin{align*}
    \Vert g^\eps \Vert_{L^2(\Gamma_D^\eps)} \le C\eps.
\end{align*}

\item \label{AssumptionGeometry} We assume that $\Gamma_D$ fulfills the following property: Let $\xi \in \R^n$ and $\xi \cdot \hat{n} = 0$ on $\Gamma_D$. Then we have $\xi = 0$.

\end{enumerate}

\begin{rem}\ 
\begin{enumerate}[label = (\roman*)]
    \item The assumption  \ref{AssumptionGeometry} is fulfilled iff $\Span\{\hat{n}(y) : y \in \Gamma_D\} = \R^n$, and is for example not true for a cylindrical inclusion $Y_s$. In this case we loose uniqueness for the microscopic model $\eqref{MicroModelNavierSlip}$. In fact, a weak solution is only unique up to a constant, see the proof of the Korn inequality in Lemma \ref{LemmaKornInequality}. However, uniqueness is guaranteed for other boundary conditions on a part of the lateral boundary, for example a no-slip condition or a pressure boundary condition of the form
\begin{align*}
    \hat{n}\cdot \left(p^\eps I-2\mu D(u^\eps)\right)\hat{n}& =p_{b}^\eps &\mbox{ on }& \Gamma_N^\eps \\
[\ueps]_{\tau}& =0 &\mbox{ on }& 
\Gamma_N^\eps.
\end{align*}
    
    \item The separation of the functions $p_b^1$ and $g^0$ is not necessary for the homogenization, but for a simpler structure of the Darcy-law (decoupling the macro- and the micro-variable). However, this assumption is  quite realistic, since $p_b^1$ is for example obtained from a homogenization process in a domain coupled to the top/bottom of the thin layer.
\end{enumerate}

\end{rem}

\begin{example}
As an example of admissible functions fulfilling the properties in the assumptions above we can take $f^{\epsilon} = f^0$ and $p_b^{\epsilon} = p_b$ (and therefore $p_b^1 = 0$) independent of the vertical variable. Further, for $g^{\epsilon}$ a possible choice is $g^{\epsilon}(x) = g_{\Gamma_D}\left(\frac{x}{\epsilon}\right)$, and therefore $g_{\Sigma} = 1$.
\end{example}

\section{Main results}\label{Main:sec}

We summarize the main results of our paper, where we have to distinguish between the cases $S_N^{\pm}$ empty or not, and also $\alpha = 0$ or positive. Since the structure of macroscopic models is quite similar and mainly differs by the boundary conditions on $S_N^{\pm}$ and $\Gamma_D$, we focus on the case $S_N^{\pm} = \emptyset$ and $\alpha = 0$, and formulate this macro model in detail. 
First of all, we introduce the function space
\begin{align}\label{DefinitionSpaceHn}
    \spaceHn &:=  \left\{ u \in H_{\#}^1(Y_f)^n \, : \, u\cdot \hat{n} = 0 \text{ on } \Gamma_D\right\},
\end{align}
the space of $(0,1)^{n-1}$-periodic functions with vanishing normal velocity on the solid surface $\Gamma_D$. We equip the space $\spaceHn$ with the norm 
\begin{align*}
    \Vert \phi \Vert_{\spaceHn} := \Vert \nabla_y \phi \Vert_{\spaceHn},
\end{align*}
which is in fact a norm, due to the Poincar\'e inequality in Lemma \ref{PoincareInequalityHn}.
\\

Now, let us introduce the two pressure homogenized Stokes model for $S_N^{\pm} = \emptyset$ and $\alpha = 0$, which is given by 
\begin{subequations}\label{TwoScaleMacroModelAlpha0}
\begin{align}
    -2\mu \nabla_y \cdot (D_y(u_0)) + \nabla_{\x} p_0 + \nabla_y p_1 &= f^0 &\mbox{ in }& \Sigma \times Y_f,
    \\
    \nabla_y \cdot u_0 &= 0 &\mbox{ in }& \Sigma \times Y_f,
    \\
    \nabla_{\x} \cdot \int_{Y_f} u_0 dy &= 0 &\mbox{ in }& \Sigma,
    \\
    \label{TwoScaleMacroModelNormalFluxZeroGammaD}
    u_0 \cdot \hat{n} &= 0 &\mbox{ on }& \Sigma \times \Gamma_D,
    \\
    \label{TwoScaleMacroModelStressBC}
    2\mu(D_y(u_0)\hat{n})_{\tau} &= g_0 &\mbox{ on }& \Sigma \times \Gamma_D,
    \\
    p_0 &= p_b &\mbox{ on }& \partial \Sigma,
    \\
    u_0 &\mbox{ is } (0,1)^{n-1}\mbox{-periodic}.
\end{align}
\end{subequations}
We say that the triple $(u_0,p_0,p_1)$ is a weak solution of problem $\eqref{TwoScaleMacroModelAlpha0}$ if
\begin{align*}
    (u_0,p_0,p_1) \in L^2(\Sigma,H_{\#}^1(Y_f)^n) \times  H^1(\Sigma) \times L^2(\Sigma \times Y_f)/\R
\end{align*}
and for all $\phi \in L^2(\Sigma ,\spaceHn)$ it holds that
\begin{align}
\begin{aligned}
\label{VariationalEquationTwoPressureProblem}
        2\mu \int_{\Sigma}\int_{Y_f}& D_y(u_0) : D_y(\phi) dy d\x + \int_{\Sigma} \int_{Y_f } \nabla_{\x} p_0\cdot \phi dy d\x - \int_{\Sigma } \int_{Y_f } p_1 \nabla_y \cdot \phi dy d\x
    \\
    &= \int_{\Sigma } \int_{Y_f } f^0 \cdot \phi dy d\x  + \int_{\Sigma} \int_{\Gamma_D } g^0 \cdot \phi d\sigma_y d\x.
\end{aligned}
\end{align}
\begin{thm}[$\alpha = 0$ and $S_N^{\pm} = \emptyset$]\label{MainResultAlphaZeroSNempty}
Let $(\ueps,\peps)$ be the microscopic solution of $\eqref{MicroModelNavierSlip}$ for $\alpha = 0$ and $S_N^{\pm} = \emptyset$. Then the sequence $\left(\epsilon^{-2} \ueps , \peps\right)$ converges in the two-scale sense to a limit function $(u_0,p_0) \in L^2(\Sigma, H_{\#}^1(Y_f)^n) \times H^1(\Sigma)$, and there exists $p_1 \in L^2(\Sigma \times Y_f)/\R$, such that the triple $(u_0,p_0,p_1)$ is the unique weak solution of the  
two pressure homogenized Stokes problem $\eqref{TwoScaleMacroModelAlpha0}$.

Further, $p_0$ is the unique weak solution of the following Darcy-law:
\begin{align}
\begin{aligned} \label{DarcyLawAlphaZero}
-\frac{1}{\mu}\nabla_{\x} \cdot \left(K(f^0 - \nabla_{\x} p_0\right) &= - \frac{1}{\mu} \nabla_{\x} \cdot \left(\beta g_{\Sigma} \right) &\mbox{ in }& \Sigma,
\\
p_0 &= p_b &\mbox{ on }& \partial \Sigma,
\end{aligned}
\end{align}
where the effective tensors $K$ and $\beta$ are defined in $\eqref{DefinitionEffectiveTensors}$ via cell problems.
\end{thm}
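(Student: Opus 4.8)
The plan is to run the standard two-pressure Stokes homogenization scheme, adapted to the simultaneous thin-layer limit: establish $\eps$-uniform bounds for $(\eps^{-2}u^\eps,p^\eps)$, extract two-scale limits, identify the limit system by means of the two-pressure decomposition, and finally reduce it to Darcy's law through cell problems. \emph{A priori bounds and compactness.} Testing \eqref{MicroModelNavierSlipVarEqu} with $\phi^\eps=u^\eps$ (admissible since $u^\eps\cdot\hat n=0$ on $\Gamma_D^\eps$) kills the $\Gamma_D^\eps$-boundary term ($\alpha=0$) and the pressure term ($\nabla\cdot u^\eps=0$), leaving $2\mu\|D(u^\eps)\|_{L^2(\Omega^\eps)}^2$ on the left. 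Estimating the right-hand side with Cauchy--Schwarz, the scaled trace inequality on $\Gamma_D^\eps$ and the data bounds (A1)--(A3), then invoking the $\eps$-uniform Korn inequality for fields with vanishing normal trace, Lemma~\ref{LemmaKornInequality} (which uses \ref{AssumptionGeometry}), and the thin-layer Poincar\'e inequality, yields $\eps^{-2}\|u^\eps\|_{L^2(\Omega^\eps)}+\eps^{-1}\|\nabla u^\eps\|_{L^2(\Omega^\eps)}\le C\sqrt\eps$, so that $\eps^{-2}u^\eps$ and $\eps^{-1}\nabla u^\eps$ are bounded in the two-scale sense. For the pressure one uses the restriction operator for thin perforated layers with connected solid phase (Section~\ref{SectionExistenceApriori}): given $q\in L^2(\Omega^\eps)$ it produces $\phi^\eps$ with $\nabla\cdot\phi^\eps=q$ up to its mean, $\phi^\eps\cdot\hat n=0$ on $\Gamma_D^\eps$ and $\|\nabla\phi^\eps\|_{L^2(\Omega^\eps)}\le C\eps^{-1}\|q\|_{L^2(\Omega^\eps)}$; inserting it into \eqref{MicroModelNavierSlipVarEqu} and fixing the additive constant through the traction datum on $\Gamma_N^\eps$ gives $\eps^{-1/2}\|p^\eps\|_{L^2(\Omega^\eps)}\le C$. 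By the compactness results of Appendix~\ref{SectionTSConvergence} there is a subsequence with $\eps^{-2}u^\eps\rats u_0$, $\eps^{-1}\nabla u^\eps\rats\nabla_y u_0$ (hence $\eps^{-1}D(u^\eps)\rats D_y(u_0)$) for some $u_0\in L^2(\Sigma,H_{\#}^1(Y_f)^n)$, and $p^\eps\rats p_0$ for some $p_0\in L^2(\Sigma\times Y_f)$. Passing to the two-scale limit in $\nabla\cdot u^\eps=0$ against $\eps\psi(\x,x/\eps)$ and against $\psi(\x)$ gives $\nabla_y\cdot u_0=0$ and $\nabla_{\x}\cdot\int_{Y_f}u_0\,dy=0$; the trace condition on $\Gamma_D^\eps$ passes to $u_0\cdot\hat n=0$ on $\Sigma\times\Gamma_D$, so $u_0(\x,\cdot)\in\spaceHn$ a.e., and testing \eqref{MicroModelNavierSlipVarEqu} with $\eps\phi(\x,x/\eps)$, $\phi\cdot\hat n=0$ on $\Gamma_D$, forces $p_0$ to be independent of $y$, i.e.\ $p_0=p_0(\x)$.

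\emph{The limit system and the two pressures.} Take $\phi^\eps(x)=\phi(\x,x/\eps)$ with $\phi\in L^2(\Sigma,\spaceHn)$ \emph{two-scale solenoidal}, i.e.\ $\nabla_y\cdot\phi=0$ and $\nabla_{\x}\cdot\int_{Y_f}\phi\,dy=0$; then $\nabla\cdot\phi^\eps=\nabla_{\x}\cdot\phi$, and dividing \eqref{MicroModelNavierSlipVarEqu} by $\eps$ and passing to the two-scale limit, the bulk pressure term converges to $\int_\Sigma(p_0-p_b)\,\nabla_{\x}\!\cdot\!\int_{Y_f}\phi\,dy\,d\x=0$, while (after integrating $\int\!\int\nabla_y p_b^1\cdot\phi$ by parts in $y$, the boundary contributions vanishing because $\phi\cdot\hat n=0$ on $\Gamma_D$ and $S_N^\pm=\emptyset$) the $\nabla_{\x}p_b$ and $p_b^1$ terms cancel, leaving, for all such $\phi$,
\[
2\mu\int_\Sigma\int_{Y_f}D_y(u_0):D_y(\phi)\,dy\,d\x=\int_\Sigma\int_{Y_f}f^0\cdot\phi\,dy\,d\x+\int_\Sigma\int_{\Gamma_D}g^0\cdot\phi\,d\sigma_y\,d\x.
\]
Hence the functional $\phi\mapsto 2\mu\int\!\int D_y(u_0):D_y(\phi)-\int\!\int f^0\cdot\phi-\int\!\int g^0\cdot\phi$ on $L^2(\Sigma,\spaceHn)$ vanishes on the two-scale solenoidal subspace; by the decomposition of Section~\ref{SectionTwoPressureDecomposition} it equals $\phi\mapsto-\int\!\int\nabla_{\x}p_0\cdot\phi+\int\!\int p_1\,\nabla_y\cdot\phi$ on all of $L^2(\Sigma,\spaceHn)$ for some $p_0\in H^1(\Sigma)$ and $p_1\in L^2(\Sigma\times Y_f)/\R$, with $p_0=p_b$ on $\partial\Sigma$ read off from the traction condition on $\Gamma_N^\eps$ (which, since $S_N^\pm=\emptyset$, prescribes the pressure on $\partial\Sigma\times(-\eps,\eps)$). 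This is precisely \eqref{VariationalEquationTwoPressureProblem}, so $(u_0,p_0,p_1)$ is a weak solution of \eqref{TwoScaleMacroModelAlpha0}.

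\emph{Uniqueness and reduction to Darcy's law.} Testing \eqref{VariationalEquationTwoPressureProblem} with $\phi=u_0$: $\int\!\int p_1\nabla_y\cdot u_0=0$ by $\nabla_y\cdot u_0=0$, and $\int\!\int\nabla_{\x}p_0\cdot u_0=\int_\Sigma\nabla_{\x}p_0\cdot\int_{Y_f}u_0\,dy=0$ after integrating by parts in $\x$ and using $\nabla_{\x}\cdot\int_{Y_f}u_0\,dy=0$ together with $p_0=p_b$ on $\partial\Sigma$; so for homogeneous data $D_y(u_0)=0$, whence $u_0=0$ by \ref{AssumptionGeometry} (as in the proof of Lemma~\ref{LemmaKornInequality}), and then uniqueness of the decomposition gives $\nabla_{\x}p_0=0$ and $p_1=0$ in $L^2/\R$, so $p_0=p_b=0$. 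Thus \eqref{TwoScaleMacroModelAlpha0} has a unique solution, the limit is subsequence-independent, and the whole sequence converges. For Darcy's law, let $(w^k,\pi^k)\in\spaceHn\times L^2(Y_f)/\R$ ($k=1,\dots,n$) and $(w^g,\pi^g)$ solve the Stokes cell problems in $Y_f$ defining $K$ and $\beta$ in \eqref{DefinitionEffectiveTensors} -- with bulk force $e_k$ resp.\ no bulk force, zero tangential stress resp.\ tangential stress datum $g_{\Gamma_D}$ on $\Gamma_D$, and $(0,1)^{n-1}$-periodicity; these are well posed by Lax--Milgram on $\spaceHn$ using the Poincar\'e inequality, Lemma~\ref{PoincareInequalityHn}, and $K=\big(\int_{Y_f}(w^k)_j\,dy\big)_{j,k}$ is symmetric positive definite by testing $w^k$ against $w^l$. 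By linearity and uniqueness, $u_0(\x,y)=\sum_{k}\big(f^0_k(\x)-\partial_{x_k}p_0(\x)\big)w^k(y)+g_\Sigma(\x)w^g(y)$, with the matching representation of $p_1$; inserting this into $\nabla_{\x}\cdot\int_{Y_f}u_0\,dy=0$ and adjoining $p_0=p_b$ on $\partial\Sigma$ gives exactly \eqref{DarcyLawAlphaZero}, whose unique solvability $p_0\in H^1(\Sigma)$, $p_0-p_b\in H^1_0(\Sigma)$, follows from positive-definiteness of $K$.

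\emph{Main obstacle.} The delicate step is the identification of the limit system: one must track all $\eps$-scalings so that the bulk pressure term drops precisely on the two-scale solenoidal test subspace, and then use the decomposition of Section~\ref{SectionTwoPressureDecomposition} to \emph{produce} $p_1$ and extend the identity to all of $L^2(\Sigma,\spaceHn)$, while extracting $p_0=p_b$ on $\partial\Sigma$ from the normal traction on $\Gamma_N^\eps$. Uniqueness is the other sensitive point, as it hinges on assumption \ref{AssumptionGeometry} through the Korn inequality -- without it $u_0$ is determined only up to a rigid motion and the effective model changes.
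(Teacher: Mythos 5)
Your overall architecture coincides with the paper's: energy estimate plus the $\eps$-uniform Korn inequality for the velocity, the restriction-operator/duality argument for the pressure, two-scale compactness, identification of the limit on the two-scale solenoidal space $\spaceV$, the two-pressure decomposition of Section~\ref{SectionTwoPressureDecomposition}, and reduction to Darcy via the cell problems. The uniqueness argument you spell out (which the paper skips) and the cell-problem bookkeeping are fine.

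There is, however, one concrete step that fails: the claim that ``the $\nabla_{\x}p_b$ and $p_b^1$ terms cancel'' for $\phi\in\spaceV$. The $\nabla_y p_b^1$ term does vanish (integration by parts in $y$, using $\nabla_y\cdot\phi=0$, $\phi\cdot\hat n=0$ on $\Gamma_D$, periodicity and $S_N^{\pm}=\emptyset$), but the $\nabla_{\x}p_b$ term does not: writing $\Phi(\x)=\int_{Y_f}\phi\,dy$, one has
\begin{align*}
\int_\Sigma\int_{Y_f}\nabla_{\x}p_b\cdot\phi\,dy\,d\x=\int_\Sigma\nabla_{\x}p_b\cdot\Phi\,d\x=\int_{\partial\Sigma}p_b\,\Phi\cdot\hat n\,d\sigma,
\end{align*}
since $\nabla_{\x}\cdot\Phi=0$; and $\spaceV$ imposes no normal-trace constraint on $\Phi$ at $\partial\Sigma$ (that constraint defines the smaller space $\widetilde{\spaceV}$ of Remark~\ref{RemarkOrthogonalComplement}, relevant only for a no-slip condition on $\Gamma_N^\eps$). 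For instance $\Phi\equiv e_1$ is attainable, giving $\int_\Sigma\partial_1 p_b\,d\x\neq0$ in general. This is not cosmetic: if your reduced identity were correct, Lemma~\ref{CharacterizationComplementW} would represent the residual functional as $\nabla_{\x}p_0+\nabla_y p_1$ with $p_0\in H_0^1(\Sigma)$, i.e.\ $p_0=0$ on $\partial\Sigma$, contradicting the boundary condition $p_0=p_b$ that you then assert by ``reading it off'' from the traction condition --- an appeal that is not a proof in this framework. The correct mechanism, as in \eqref{AuxiliaryEquationMacroModel1}, is to keep $f^0-\nabla_{\x}p_b$ on the right-hand side; the decomposition then yields $P_0\in H_0^1(\Sigma)$ and $p_1$, and absorbing $\nabla_{\x}p_b$ gives $p_0:=P_0+p_b$, whence $p_0-p_b\in H_0^1(\Sigma)$, i.e.\ exactly the Dirichlet datum in \eqref{DarcyLawAlphaZero}. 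So the inhomogeneous boundary condition for the Darcy pressure is generated precisely by the term you discarded; with that correction your argument goes through.
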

The two-scale convergence  is shown in Lemma \ref{ConvergenceResults} and the macroscopic model is derived in Section \ref{SectionCaseAlphaZeroSNZero}.
\begin{rem}
The Darcy velocity is given by the averaged quantity $\bar{u}_0 (\x) := \int_{Y_f} u_0(\x,y) dy $ for almost every $\x \in \Sigma$ and fulfills
\begin{align}
    \bar{u}_0(\x) = \frac{1}{\mu} K(f^0(\x) - \nabla_{\x} p_0(\x)) +\frac{1}{\mu} \beta g_{\Sigma}(\x).
\end{align}
\end{rem}
For the case $\alpha = 0$ and $S_N^{\pm} \neq \emptyset$ we have to add an additional boundary term including the first order pressure corrector $p_b^1$, see assumption \ref{AssumptionPressure}. 

\begin{thm}[$\alpha = 0$ and $S_N^{\pm} \neq \emptyset$]\label{MainResultAlphaZeroSNnotempty}
The microscopic solution $(\ueps,\peps)$ of $\eqref{MicroModelNavierSlip}$ for $\alpha = 0$ and $S_N^{\pm} \neq \emptyset$ fulfills the same compactness result as in Theorem \ref{MainResultAlphaZeroSNempty}. Further, there exists $p_1 \in L^2(\Sigma \times Y_f)$ (unique, not only up to a constant), such that the triple $(u_0,p_0,p_1)$ is the unique weak solution of the  
two pressure homogenized Stokes problem $\eqref{TwoScaleMacroModelAlpha0}$ with the additional boundary condition 
\begin{align*}
    \left[ p_1 I - 2\mu D_y(u_0)\right]\hat{n} = p_b^1 \hat{n} \qquad\mbox{on } \Sigma \times  S_N^{\pm}.
\end{align*}

Additionally, $p_0$ is the unique weak solution of the following Darcy-law:
\begin{align}
\begin{aligned} \label{DarcyLawAlphaZeroSNneqempty}
-\frac{1}{\mu}\nabla_{\x} \cdot \left(K(f - \nabla_{\x} p_0)\right) &= - \frac{1}{\mu} \nabla_{\x} \cdot \left(\beta g_{\Sigma} + \kappa p_{b,\Sigma}^1 \right) &\mbox{ in }& \Sigma,
\\
p_0 &= p_b &\mbox{ on }& \partial \Sigma,
\end{aligned}
\end{align}
where the effective tensors $K$, $\kappa$, and $\beta$ are defined in Section \ref{SectionAlphaZeroSNneqempty}.
\end{thm}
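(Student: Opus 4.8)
\emph{Overall strategy and a priori estimates.} The plan is to follow the scheme of Theorem~\ref{MainResultAlphaZeroSNempty} and only track the modifications forced by $S_N^{\pm}\neq\emptyset$. Since the theorem asserts the same compactness statement, I would first reproduce the $\eps$-uniform estimates verbatim: testing the weak formulation \eqref{MicroModelNavierSlipVarEqu} (with $\alpha=0$) against $\phi^\eps=\ueps$ and invoking the $\eps$-explicit Korn inequality of Lemma~\ref{LemmaKornInequality} --- legitimate because $\ueps\cdot\hat n=0$ on $\Gamma_D^\eps$ and assumption~\ref{AssumptionGeometry} is in force --- together with the data bounds (A1)--(A3), gives $\eps^{-2}\Vert D(\ueps)\Vert_{L^2(\Omega^\eps)}\le C\eps^{-1/2}$ and hence a uniform bound for $\eps^{-2}\ueps$ in the scaled $H^1$-norm. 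For the pressure I would use the restriction operator for thin perforated layers with connected solid phase from Section~\ref{SectionExistenceApriori}, combined with the usual duality argument, to get $\Vert\peps\Vert_{L^2(\Omega^\eps)}\le C\eps^{1/2}$. Lemma~\ref{ConvergenceResults} then supplies a subsequence with $\eps^{-2}\ueps\rats u_0$, $\eps^{-2}\nabla\ueps\rats\nabla_y u_0$ and $\peps\rats p_0$; passing to the limit in $\nabla\cdot\ueps=0$ and $\ueps\cdot\hat n=0$ on $\Gamma_D^\eps$ gives $u_0\in L^2(\Sigma,\spaceHn)$ with $\nabla_y\cdot u_0=0$ and $\nabla_{\x}\cdot\int_{Y_f}u_0\,dy=0$, while the structure of the pressure limit forces $p_0$ to be $y$-independent, and since the limit equation (below) exhibits $\nabla_{\x}p_0$ as an $L^2(\Sigma)$ function we get $p_0\in H^1(\Sigma)$ with $p_0=p_b$ on $\partial\Sigma$.

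\emph{Passage to the limit and the $S_N^{\pm}$-boundary term.} Next I would insert into \eqref{MicroModelNavierSlipVarEqu} (equivalently into the original weak form with the explicit $\Gamma_N^\eps$-traction term) both oscillating test functions $\phi^\eps(x)=\eps^2\phi(\x,x/\eps)$, with $\phi$ smooth, $(0,1)^{n-1}$-periodic in the fast variable and $\phi\cdot\hat n=0$ on $\Gamma_D$, and purely macroscopic test functions $\phi^\eps(x)=\eps^2\psi(\x)$. Using two-scale convergence in the bulk and on the oscillating surface $\Gamma_D^\eps$ (with $\eps^{-1}g^\eps\rats g^0$), the viscous term, the $\Gamma_D^\eps$-term and the force term pass to their counterparts in \eqref{VariationalEquationTwoPressureProblem}, and the pressure term produces $\nabla_{\x}p_0+\nabla_y p_1$. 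The one genuinely new contribution relative to the case $S_N^{\pm}=\emptyset$ is that the part of the exterior boundary $\Gamma_N^\eps$ lying on the oscillating horizontal faces $\eps(S_N^{\pm}+k)$ no longer disappears; a boundary two-scale limit of the exterior traction contribution, together with the expansion $\nabla p_b^\eps\rats\nabla_{\x}p_b+\nabla_y p_b^1$ of assumption~\ref{AssumptionPressure}, leaves the surface term $-\int_\Sigma\int_{S_N^{\pm}}p_b^1\,\hat n\cdot\phi\,d\sigma_y\,d\x$. Thus I expect the limit triple to satisfy, for all $\phi\in L^2(\Sigma,\spaceHn)$,
\begin{align*}
2\mu\int_\Sigma\int_{Y_f}D_y(u_0):D_y(\phi)\,dy\,d\x&+\int_\Sigma\int_{Y_f}\nabla_{\x}p_0\cdot\phi\,dy\,d\x-\int_\Sigma\int_{Y_f}p_1\,\nabla_y\cdot\phi\,dy\,d\x\\
&=\int_\Sigma\int_{Y_f}f^0\cdot\phi\,dy\,d\x+\int_\Sigma\int_{\Gamma_D}g^0\cdot\phi\,d\sigma_y\,d\x-\int_\Sigma\int_{S_N^{\pm}}p_b^1\,\hat n\cdot\phi\,d\sigma_y\,d\x,
\end{align*}
which is the weak form of \eqref{TwoScaleMacroModelAlpha0} together with the extra condition $[p_1 I-2\mu D_y(u_0)]\hat n=p_b^1\hat n$ on $\Sigma\times S_N^{\pm}$.

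\emph{Identification of the pressures and uniqueness.} Restricting the identity above to test fields $\phi$ that are solenoidal in both senses ($\nabla_y\cdot\phi=0$ and $\nabla_{\x}\cdot\int_{Y_f}\phi\,dy=0$), the functional $\phi\mapsto 2\mu\int\!\int D_y(u_0):D_y(\phi)-(\text{data})$ vanishes on that subspace, and the two-pressure decomposition of Section~\ref{SectionTwoPressureDecomposition} --- in its version with the $S_N^{\pm}$-boundary term --- reconstructs $p_0\in H^1(\Sigma)$ and $p_1\in L^2(\Sigma\times Y_f)$. The point I would stress is that, because $\spaceHn$ now contains fields with nonzero normal trace on $S_N^{\pm}$, the identity $\int_{Y_f}\nabla_y\cdot\phi\,dy=\int_{S_N^{\pm}}\phi\cdot\hat n\,d\sigma_y$ is generally nonzero, so the pairing $p_1\mapsto-\int\!\int p_1\,\nabla_y\cdot\phi$ detects constants; hence $p_1$ is pinned down uniquely in $L^2(\Sigma\times Y_f)$, not merely modulo $\R$ as in Theorem~\ref{MainResultAlphaZeroSNempty}. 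Uniqueness of the full triple is then a routine energy argument: testing the homogeneous problem with $\phi=u_0$ bounds $\Vert\nabla_y u_0\Vert_{L^2(\Sigma\times Y_f)}$, so $u_0=0$ by the Poincar\'e inequality of Lemma~\ref{PoincareInequalityHn}; consequently $\nabla_{\x}p_0=0$ with $p_0=0$ on $\partial\Sigma$ gives $p_0=0$, and finally $\nabla_y p_1=0$ with $p_1\hat n=0$ on $S_N^{\pm}$ gives $p_1=0$. By uniqueness the whole sequence, not merely a subsequence, converges.

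\emph{Darcy's law and the main obstacle.} Finally, since $f^0$ is $y$-independent and $g^0=g_\Sigma(\x)g_{\Gamma_D}(y)$, $p_b^1=p_{b,\Sigma}^1(\x)p_{b,N}^1(y)$ by (A3) and \ref{AssumptionPressure}, linearity of the cell problem in $Y_f$ lets me write $u_0(\x,y)=\sum_{i}(f^0-\nabla_{\x}p_0)_i(\x)\,w^i(y)+g_\Sigma(\x)\,w^g(y)+p_{b,\Sigma}^1(\x)\,w^p(y)$, where $w^i$, $w^g$, $w^p$ solve Stokes-type cell problems in $Y_f$ (periodic, with $w\cdot\hat n=0$ on $\Gamma_D$) driven respectively by the constant force $e_i$, by the tangential stress $g_{\Gamma_D}$ on $\Gamma_D$, and by the normal traction $p_{b,N}^1\hat n$ on $S_N^{\pm}$, all remaining data being homogeneous; $p_1$ is the matching combination of the cell pressures, whose additive constants are pinned by the $S_N^{\pm}$-traction conditions. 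Averaging over $Y_f$ defines the effective tensors $K$, $\beta$, $\kappa$ as in Section~\ref{SectionAlphaZeroSNneqempty}, giving $\bar u_0=\tfrac1\mu\bigl(K(f^0-\nabla_{\x}p_0)+\beta g_\Sigma+\kappa p_{b,\Sigma}^1\bigr)$; inserting this into $\nabla_{\x}\cdot\bar u_0=0$ with $p_0=p_b$ on $\partial\Sigma$ produces exactly \eqref{DarcyLawAlphaZeroSNneqempty}, and symmetry together with positive-definiteness of $K$ --- obtained by testing the $w^i$-cell problem with $w^j$ and using Korn --- make this elliptic problem well posed, so $p_0$ is its unique weak solution. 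The step I expect to be the main obstacle is making rigorous the limit of the exterior-boundary traction term on the oscillating faces $S_N^{\pm}$, i.e.\ a two-scale/unfolding convergence for traces on $\Gamma_N^\eps$ that copes with the solid holes in $\Gamma_N^\eps$, together with the accompanying refinement of the two-pressure decomposition that fixes $p_1$ uniquely; the a priori $L^2$-pressure bound via the restriction operator for a connected solid phase is also technically heavy, though it is already provided by Section~\ref{SectionExistenceApriori}.
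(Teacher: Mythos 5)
Your proposal is correct and follows essentially the same route as the paper: the unchanged \emph{a priori} estimates and two-scale compactness, the $S_N^{\pm}\neq\emptyset$ variant of the two-pressure decomposition (Remark~\ref{RemarkOrthogonalComplement}), the additional cell problem \eqref{CellProblemWN} driven by $p_{b,N}^1\hat n$ on $S_N^{\pm}$, and the averaged Darcy law. One clarification on the step you single out as the main obstacle: no two-scale convergence of traces on $\Gamma_N^\eps$ is needed, and trying to take a surface two-scale limit of the traction $(-p^\eps I+2\mu D(u^\eps))\hat n$ there would fail for lack of trace control on $D(u^\eps)$. The traction condition has already been absorbed into the volume formulation \eqref{MicroModelNavierSlipVarEqu}, so the new surface term comes solely from the bulk term $\int_{\Omega^\eps}\nabla p_b^\eps\cdot\phi^\eps\,dx$: its two-scale limit contains $\nabla_y p_b^1$, and integrating this by parts in $y$ against a test field with $\nabla_y\cdot\phi=0$, $\phi\cdot\hat n=0$ on $\Gamma_D$ and periodic lateral traces leaves exactly $-\sum_{\pm}\int_\Sigma\int_{S_N^{\pm}}p_b^1\,\phi\cdot\hat n\,d\sigma_y\,d\x$; your sign is the one consistent with the stated condition $[p_1I-2\mu D_y(u_0)]\hat n=p_b^1\hat n$. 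Two minor points: in the uniqueness argument, testing with $u_0$ controls only $\Vert D_y(u_0)\Vert$, so you need the Korn inequality of Lemma~\ref{LemmaLocalKornInequality} for periodic fields with vanishing normal trace, not merely the Poincar\'e inequality, to conclude $u_0=0$; and the boundary condition $p_0=p_b$ on $\partial\Sigma$ is not obtained by passing to the limit in any direct sense but from the fact that Lemma~\ref{CharacterizationComplementW} produces a macroscopic potential $P_0\in H_0^1(\Sigma)$ which the uniqueness of the decomposition identifies with $p_0-p_b$.
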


For $\alpha >0 $ we additionally have to distinguish between the three cases $\gamma < -1$, $\gamma =-1$, and $\gamma>-1$. 
Only in the critical case $\gamma =-1$ an additional contribution in the macroscopic model occurs on the boundary $\Gamma_D$.  For $\gamma < -1$ the slip conditition becomes a no-slip condition.
\begin{thm}\label{MainResultAlphaNotZero}
Let $\alpha  > 0$. Then we have:
\begin{enumerate}[label = (\roman*)]
    \item\label{MainResultAlphaNotZeroGammaBigger-1} For $\gamma>-1$ we obtain the same results as in Theorem \ref{MainResultAlphaZeroSNempty} and \ref{MainResultAlphaZeroSNnotempty} for $S_N^{\pm}=\emptyset$ resp. $S_N^{\pm} \neq \emptyset$.
    \item\label{MainResultAlphaNotZeroGammaEqual-1} For $\gamma = -1$ we have to replace in the two pressure Stokes system the boundary condition on $\Gamma_D$ with
    \begin{align*}
    2\mu(D_y(u_0)\hat{n})_{\tau} + \alpha u_0 &= g_0 &\mbox{ on }& \Sigma \times \Gamma_D.
    \end{align*}
    Then we obtain again the results from Theorem \ref{MainResultAlphaZeroSNempty} and \ref{MainResultAlphaZeroSNnotempty} with modified boundary condition on $\Gamma_D$, also for the cell problems  $\eqref{CellProblemWi}$, $\eqref{CellProblemWGamma}$, and $\eqref{CellProblemWN}$ for $S_N^{\pm} \neq \emptyset$.
    \item\label{MainResultAlphaNotZeroGammaSmaller-1} For $\gamma < -1$ we have to replace the boundary conditions $\eqref{TwoScaleMacroModelNormalFluxZeroGammaD}$ and $\eqref{TwoScaleMacroModelStressBC}$ on $\Gamma_D$ in the two pressure Stokes system by the no slip condition
    \begin{align*}
        u_0 = 0 \qquad\mbox{on } \Sigma \times \Gamma_D.
    \end{align*}
    Again we obtain similar results as in Theorem \ref{MainResultAlphaZeroSNempty} and \ref{MainResultAlphaZeroSNnotempty} with modified boundary conditions in the cell problems, if we put (formally) $g_{\Sigma}$ and $g_{\Gamma_D}$ equal to zero.
\end{enumerate}

\end{thm}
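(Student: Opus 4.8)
The plan is to follow the route used for Theorems~\ref{MainResultAlphaZeroSNempty} and~\ref{MainResultAlphaZeroSNnotempty}: establish $\eps$-uniform a priori bounds for $(\ueps,\peps)$, pass to the two-scale limit in \eqref{MicroModelNavierSlipVarEqu}, identify the two pressures through the decomposition of Section~\ref{SectionTwoPressureDecomposition}, and read off the Darcy law from the cell problems. The only new ingredient compared with $\alpha=0$ is the boundary term $\alpha\eps^\gamma\int_{\Gamma_D^\eps}\ueps\cdot\phi^\eps\,d\sigma$, whose $\eps$-asymptotics change precisely at $\gamma=-1$.

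Since $\alpha>0$, this term is nonnegative, so testing \eqref{MicroModelNavierSlipVarEqu} with $\phi^\eps=\ueps$ and using $\nabla\cdot\ueps=0$, the assumptions (A1)--(A3), the scaled trace inequality in the thin layer, and a Korn inequality (Lemma~\ref{LemmaKornInequality}, or for $\gamma$ small enough the weaker Korn inequality with boundary term of \cite[Lemma~2.4]{allaire1991homogenization}), I expect the same velocity estimate as for $\alpha=0$, namely $\tfrac1{\sqrt\eps}\|\eps^{-2}\ueps\|_{L^2(\Omega^\eps)}+\tfrac1{\sqrt\eps}\|\eps^{-1}\nabla\ueps\|_{L^2(\Omega^\eps)}\le C$, together with the extra surface bound $\eps^\gamma\|\ueps\|_{L^2(\Gamma_D^\eps)}^2\le C\eps^3$, i.e. $\|\eps^{-2}\ueps\|_{L^2(\Gamma_D^\eps)}^2\le C\eps^{-1-\gamma}$. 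The uniform $L^2(\Omega^\eps)$-bound for $\peps$ comes from the restriction operator of Section~\ref{SectionExistenceApriori}, which does not see the boundary term, and existence and uniqueness for fixed $\eps$ follow from coercivity together with the inf-sup condition for the divergence constraint. The two-scale compactness results of Appendix~\ref{SectionTSConvergence} then yield, along a subsequence, $\eps^{-2}\ueps\rats u_0\in L^2(\Sigma,H_{\#}^1(Y_f)^n)$ with $\nabla_y\cdot u_0=0$, $\nabla_{\x}\cdot\int_{Y_f}u_0\,dy=0$, $u_0\cdot\hat n=0$ on $\Sigma\times\Gamma_D$, $\peps\rats p_0\in H^1(\Sigma)$ and $\nabla\peps\rats\nabla_{\x}p_0+\nabla_yp_1$.

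Taking test functions $\phi^\eps(x)=\eps^2\psi(\x,x/\eps)$ with $\psi\in L^2(\Sigma,\spaceHn)$ smooth and passing to the limit in \eqref{MicroModelNavierSlipVarEqu} exactly as in the proof of Theorem~\ref{MainResultAlphaZeroSNempty}, the only genuinely new contribution is the surface term, which after the same normalization equals $\alpha\eps^{\gamma+1}\int_{\Gamma_D^\eps}(\eps^{-2}\ueps)\cdot\psi(\x,x/\eps)\,d\sigma=O(\eps^{(\gamma+1)/2})$ by the surface bound. For $\gamma>-1$ it vanishes in the limit and one recovers verbatim the identity \eqref{VariationalEquationTwoPressureProblem}, proving~\ref{MainResultAlphaNotZeroGammaBigger-1}. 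For $\gamma=-1$ the prefactor is $1$ and $\eps^{-2}\ueps$ is bounded in $L^2(\Gamma_D^\eps)$, so its surface two-scale limit is the trace of $u_0$ and the term tends to $\alpha\int_\Sigma\int_{\Gamma_D}u_0\cdot\psi\,d\sigma_y\,d\x$; this is the extra term $\alpha u_0$ in the condition on $\Gamma_D$, which gives~\ref{MainResultAlphaNotZeroGammaEqual-1}. For $\gamma<-1$ one has $\|\eps^{-2}\ueps\|_{L^2(\Gamma_D^\eps)}^2\le C\eps^{-1-\gamma}\to0$, so the scaled traces converge strongly to $0$; hence the surface two-scale limit of $\eps^{-2}\ueps$ vanishes and $u_0=0$ on $\Sigma\times\Gamma_D$. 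The limit problem is then posed on the no-slip space $\{v\in H_{\#}^1(Y_f)^n : v=0\text{ on }\Gamma_D\}$, and testing the limit identity only against $\psi$ that vanish on $\Gamma_D$, approximated by $\phi^\eps=\eps^2\psi(\x,x/\eps)$ which has full zero trace on $\Gamma_D^\eps$, makes both the $\alpha\eps^\gamma$- and the $g^\eps$-boundary integrals vanish identically; this yields~\ref{MainResultAlphaNotZeroGammaSmaller-1} with $g_\Sigma=g_{\Gamma_D}=0$. In every case the decomposition of Section~\ref{SectionTwoPressureDecomposition} produces $p_1$ (unique up to a constant when $S_N^\pm=\emptyset$, unique together with the stress condition on $\Sigma\times S_N^\pm$ when $S_N^\pm\neq\emptyset$), and the Darcy law \eqref{DarcyLawAlphaZero}, resp.\ \eqref{DarcyLawAlphaZeroSNneqempty}, follows by superposition: $u_0$ is decomposed along the solutions of the cell problems \eqref{CellProblemWi}, \eqref{CellProblemWGamma}, \eqref{CellProblemWN} --- carrying the Robin datum on $\Gamma_D$ when $\gamma=-1$, or the no-slip condition on $\Gamma_D$ with vanishing surface data when $\gamma<-1$ --- and integrating over $Y_f$ and inserting into $\nabla_{\x}\cdot\int_{Y_f}u_0\,dy=0$ identifies the effective tensors $K,\beta$ (and $\kappa$ when $S_N^\pm\neq\emptyset$).

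\textbf{Main obstacle.} The delicate case is $\gamma<-1$: from the strong vanishing of the scaled $L^2(\Gamma_D^\eps)$-traces one must deduce both that $u_0=0$ on all of $\Sigma\times\Gamma_D$ and that it is then legitimate to restrict the two-scale test functions to those vanishing on $\Gamma_D$ --- equivalently, to discard the boundary integrals while keeping the limiting variational identity equivalent to the no-slip two pressure problem and preserving well-posedness. This is exactly where \cite{capatina2011homogenisation} is to be corrected: the trace of the velocity vanishes, yet $u_0\not\equiv0$ in $Y_f$, so the Darcy velocity $\int_{Y_f}u_0\,dy$ is in general nonzero. A second, more routine issue is to carry out the $\eps$-uniform a priori estimates by a single argument valid for all $\gamma\in\R$, which forces the use of the normal-trace Korn inequality of Lemma~\ref{LemmaKornInequality} (hence assumption~\ref{AssumptionGeometry}) whenever the $\alpha\eps^\gamma$-term is too weak to supply coercivity on its own.
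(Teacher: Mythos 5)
Your proposal is correct and follows essentially the same route as the paper: the trichotomy rests on the surface estimate $\|\eps^{-2}\ueps\|_{L^2(\Gamma_D^\eps)}\le C\eps^{-(1+\gamma)/2}$, which makes the (suitably normalized) boundary term vanish for $\gamma>-1$, converge to $\alpha\int_\Sigma\int_{\Gamma_D}u_0\cdot\psi$ for $\gamma=-1$, and forces the scaled traces (hence $u_0|_{\Gamma_D}$) to zero for $\gamma<-1$, in which case the paper likewise tests only with functions vanishing on $\Gamma_D$ and invokes the no-slip variant of the two-pressure decomposition (Remark~\ref{RemarkOrthogonalComplement}). The only difference is the cosmetic normalization of the oscillating test functions ($\eps^{2}\psi$ versus the paper's $\eps^{-1}\phi$), which does not affect the argument.
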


\begin{rem}
For $\gamma < -1$ the boundary force $g_0$ has no influence on the macroscopic equation, and we only expect a contribution in higher order corrector terms.
\end{rem}

\section{Existence, \textit{a priori} estimates, and two-scale compactness}\label{SectionExistenceApriori}

To show existence of a weak solution and uniform \textit{a priori} estimates with respect to $\eps$ we need a Korn inequality for functions with zero-normal trace. In general, for arbitrary Lipschitz domains $U \subset \R^n$,  $\Vert D(\cdot)\Vert_{L^2(U)}$ is not a norm on the space $H^1(U)^n$ with vanishing normal traces (at least on a part of the boundary $\partial U$). For $U\subset \R^n$ Lipschitz domain, and $\Lambda \subset \partial U$ with positive measure we introduce the space (see also \cite{amrouche2014lp})
\begin{align*}
    \mathcal{T}_{\Lambda}(U) := \left\{ u \in H^1(U)^n \, : \, D(u) = 0, \, \, u\cdot \hat{n} = 0 \text{ on } \Lambda \right\}.
\end{align*}
In \cite[Lemma 3.3]{amrouche2014lp} the following Korn inequality is proved: For all $u \in H^1(U)^n$ it holds that
\begin{align*}
    \mathrm{inf}_{v \in \mathcal{T}_{\partial U}(U) } \Vert u + v \Vert_{L^2(U)}^2 \le C \left(\Vert D(u) \Vert_{L^2(U)}^2 + \int_{\partial U} \vert u \cdot \hat{n} \vert^2 d\sigma \right).
\end{align*}
It is easy to check that the proof can be transferred to the case when we replace $\partial U$ with $\Lambda$. Hence, we obtain for all $u \in H^1(U)^n$
\begin{align*}
   \mathrm{inf}_{v \in \mathcal{T}_{\Lambda}(U)} \Vert u +v \Vert_{L^2(U)}^2 \le C \left(\Vert D(u) \Vert_{L^2(U)}^2 + \int_{\Lambda} \vert u \cdot \hat{n} \vert^2 d\sigma \right).
\end{align*}
Especially, if $U$ and $\Lambda$ are such that $\mathcal{T}_{\Lambda}(U) = \emptyset$, it holds for  all $u \in H^1(U)^n$ with $u \cdot \hat{n} = 0$ on $\Lambda$ that
\begin{align*}
    \Vert u \Vert_{L^2(U)} \le C \Vert D(u)\Vert_{L^2(U)}.
\end{align*}
In the following we investigate how this result is valid for our geometrical setting. 
First, we introduce the set
\begin{align*}
    \mathcal{E}:= \{-1,0,1\}^{n-1} \times \{0\},
\end{align*}
Now we define for $\ast =f,s$ the set
\begin{align*}
    \mathcal{Z}_{\ast}:= \left\{ \hat{Y}_{\ast}^I = \mathrm{int}\bigcup_{k \in I} \overline{Y_{\ast}} + k \, : \, I \subset \mathcal{E} , \, 0 \in I, \, \mathrm{dim}\left(\mathrm{span}(I)\right) = n-1\right\}. 
\end{align*}
The set $\mathcal{Z}_{\ast}$ contains sets including the reference element $Y_{\ast}$ and some neighboring cells. We emphasize that for every microscopic cell $\epsilon (Y_f + k) \subset \Omega^{\epsilon}$ with $k \in \Z^{n-1} \times \{0\}$, there exists $\hat{Y}_f^I \in \mathcal{Z}_f$, such that $\epsilon (Y_f + k ) \subset \epsilon (\hat{Y}_f^I + k) \subset \Omega^{\epsilon}$. Obviously, for every inner cell $\epsilon (Y_f + k)$ not touching the lateral boundary $\partial \Sigma \times (-\epsilon,\epsilon)$, we can choose $I= \mathcal{E}$. However, this is not true for microscopic cells touching the lateral boundary, and therefore the more complicated definition of $\mathcal{Z}_{\ast}$ is needed. Let us define the for elements $\hat{Y}_f^I$ and $\hat{Y}_s^I$ the inner boundary
\begin{align*}
    \hat{\Gamma}_D^I := \mathrm{int} \left(\overline{\hat{Y}_f } \cap \overline{\hat{Y}_s}\right).
\end{align*}
We have the following local Korn-inequalities:
\begin{lem}\label{LemmaLocalKornInequality}\ 
\begin{enumerate}[label = (\roman*)]
\item Let $\hat{Y}_f^I \in \mathcal{Z}_f$. For every $u \in H^1(\hat{Y}_f^I)^n$ with $u \cdot \hat{n}=0$ on $\hat{\Gamma}_D^I$, it holds that
\begin{align}\label{LocalKornInequality}
    \Vert u \Vert_{L^2(\hat{Y}_f^I)} \le C \Vert D(u) \Vert_{L^2(\hat{Y}_f^I)}.
\end{align}
\item For every $u \in H_{\#}^1(Y_f)^n$ with $u \cdot \hat{n} = 0$ on $\Gamma$ inequality $\eqref{LocalKornInequality}$ is also valid (if we replace $\hat{Y}_f^I$ with $Y_f$).
\end{enumerate}
\end{lem}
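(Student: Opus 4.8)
The plan is to argue by contradiction using a compactness argument, exploiting the fact that the domains $\hat Y_f^I$ (resp. $Y_f$) are fixed Lipschitz domains and that the condition $u\cdot\hat n=0$ on $\hat\Gamma_D^I$ together with assumption \ref{AssumptionGeometry} rules out all nontrivial infinitesimal rigid displacements. First I would recall the Korn inequality from \cite{amrouche2014lp} quoted just above the statement: for $u\in H^1(U)^n$ with $u\cdot\hat n=0$ on $\Lambda$,
\begin{align*}
    \mathrm{inf}_{v\in\mathcal T_\Lambda(U)}\Vert u+v\Vert_{L^2(U)}^2 \le C\Vert D(u)\Vert_{L^2(U)}^2,
\end{align*}
applied with $U=\hat Y_f^I$ and $\Lambda=\hat\Gamma_D^I$ (note $\hat\Gamma_D^I$ has positive $(n-1)$-dimensional measure because $Y_s$ is a nonempty open set with Lipschitz boundary, so $\Gamma_D=Y\cap\partial Y_s$ has positive measure, and $0\in I$ guarantees this piece is inherited by $\hat\Gamma_D^I$). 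Then \eqref{LocalKornInequality} follows the moment one shows $\mathcal T_{\hat\Gamma_D^I}(\hat Y_f^I)=\{0\}$, since in that case the infimum on the left is simply $\Vert u\Vert_{L^2}^2$ and one combines with the standard Korn inequality $\Vert\nabla u\Vert_{L^2}\le C(\Vert u\Vert_{L^2}+\Vert D(u)\Vert_{L^2})$ to upgrade to the $H^1$-norm if desired (though the stated inequality is already the $L^2$ bound).

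The heart of the matter is therefore the claim $\mathcal T_{\hat\Gamma_D^I}(\hat Y_f^I)=\{0\}$. Recall that $D(v)=0$ on a connected open set forces $v$ to be an infinitesimal rigid displacement, $v(x)=Ax+b$ with $A\in\Skew(n)$ and $b\in\R^n$ (here I would note, or briefly check, that $\hat Y_f^I$ is connected: it is a union of scaled-and-translated copies of the connected Lipschitz domain $Y_f$ glued along shared open faces $S_N^\pm$-type interfaces — actually along the lateral periodicity faces — and the condition $\mathrm{dim}(\mathrm{span}(I))=n-1$ plus $0\in I$ ensures the union stays connected; if $S_N^\pm=\emptyset$ one may need the cells to be joined through $Y_f$ meeting all lateral faces, which is exactly what the hypothesis that $\Omega^\eps$ is connected provides locally). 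Given $v(x)=Ax+b$ with $v\cdot\hat n=0$ on $\hat\Gamma_D^I$, I would restrict attention to the copy of $\Gamma_D$ sitting in the central cell ($k=0$): there $(Ax+b)\cdot\hat n(x)=0$ for all $x\in\Gamma_D$. Testing this against the two normals $\hat n(x)$ and $\hat n(x')$ at two points, or differentiating along $\Gamma_D$, and using assumption \ref{AssumptionGeometry} (which says $\Span\{\hat n(y):y\in\Gamma_D\}=\R^n$, equivalently the only $\xi$ with $\xi\cdot\hat n\equiv0$ is $\xi=0$) one extracts first that the "translation-like" part vanishes and then that $A=0$. Concretely: write $x\in\Gamma_D$ and $x_0\in\Gamma_D$ a fixed point; from $(Ax+b)\cdot\hat n(x)=0$ and $(Ax_0+b)\cdot\hat n(x)=0$ (the latter only if $x_0$ also lies on $\Gamma_D$ with the same normal $\hat n(x)$ — instead it is cleaner to use that on a Lipschitz hypersurface the normal varies, and to integrate). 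The clean route: since $A=-A^T$, $(Ax)\cdot\hat n(x)$ can be rewritten and one shows $b\cdot\hat n(x)=-(Ax)\cdot\hat n(x)$ for all $x$; taking two points $x_1,x_2\in\Gamma_D$ with $\hat n(x_1)=\hat n(x_2)=:\nu$ (possible on any nontrivial Lipschitz piece, or argue by a density/ measure argument) gives $(A(x_1-x_2))\cdot\nu=0$, and letting the pair range over $\Gamma_D$ forces $A\nu\perp(\text{span of such differences})$, which combined with \ref{AssumptionGeometry} yields $A=0$; then $b\cdot\hat n(x)=0$ on $\Gamma_D$, so $b=0$ again by \ref{AssumptionGeometry}. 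Hence $v\equiv0$.

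The main obstacle I anticipate is the careful treatment of the geometry: verifying that $\hat Y_f^I$ is genuinely connected for every admissible index set $I$ in $\mathcal Z_f$ (in particular for the boundary cells where $I\subsetneq\mathcal E$ and the condition $\mathrm{dim}(\mathrm{span}(I))=n-1$ must do the work), and making the argument "$D(v)=0$ and $v\cdot\hat n=0$ on $\hat\Gamma_D^I$ imply $v=0$" fully rigorous when $\Gamma_D$ is merely Lipschitz — one needs the normal $\hat n$ to be defined a.e. and to span $\R^n$ in the sense of \ref{AssumptionGeometry}. For part (ii) the argument is identical with $\hat Y_f^I$ replaced by $Y_f$ and $\hat\Gamma_D^I$ by $\Gamma$, using that $Y_f$ is connected Lipschitz and that $H^1_\#(Y_f)$ still permits the rigid-displacement classification (periodicity in the lateral directions is compatible with affine maps only when $A=0$ in those directions, which only helps). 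I would present part (i) in detail and remark that (ii) follows verbatim, since no new idea is needed.
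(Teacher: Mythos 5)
Your overall skeleton matches the paper's: reduce \eqref{LocalKornInequality} to showing $\mathcal{T}_{\hat{\Gamma}_D^I}(\hat{Y}_f^I)=\{0\}$ via the Korn inequality of \cite{amrouche2014lp}, classify $D(v)=0$ as rigid displacements $v(y)=\alpha+Ay$ with $A$ skew, and invoke assumption \ref{AssumptionGeometry}. Part (ii) is also handled correctly (periodicity forces $A=0$, then $\alpha\cdot\hat{n}=0$ on $\Gamma_D$ gives $\alpha=0$).

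However, there is a genuine gap in the key step of part (i): your argument for $A=0$. You restrict attention to the copy of $\Gamma_D$ in the central cell and try to extract $A=0$ from the identity $(Ax+b)\cdot\hat{n}(x)=0$ on that single copy, using pairs of points $x_1,x_2\in\Gamma_D$ with equal normals and the claim that \ref{AssumptionGeometry} then forces $A=0$. This cannot work. Assumption \ref{AssumptionGeometry} only controls the span of the normals, not the span of differences of points sharing a normal; worse, the paper's own remark after the lemma exhibits the counterexample: if $Y_s$ is a ball (which satisfies \ref{AssumptionGeometry}), the rotations $y\mapsto\beta\times(y-m)$ lie in $\mathcal{T}_{\Gamma_D}(Y_f)$, so no single-cell argument can kill the skew part — and for a sphere there are in fact no two distinct points with the same normal, so your construction produces nothing. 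The information you are discarding is exactly what the enlarged domain $\hat{Y}_f^I$ provides: for each $e\in I$ the translated patch $\Gamma_D+e$ also lies in $\hat{\Gamma}_D^I$ and, by periodicity of $E_s$, carries the same normal, $\hat{n}(y+e)=\hat{n}(y)$. Evaluating $v\cdot\hat{n}=0$ at $y$ and at $y+e$ and subtracting gives $Ae\cdot\hat{n}(y)=0$ for all $y\in\Gamma_D$, whence $Ae=0$ by \ref{AssumptionGeometry}; since the $e\in I$ span an $(n-1)$-dimensional space and a skew-symmetric matrix has even rank, $A=0$, and then $\alpha=0$ as you say. Your concern about connectedness of $\hat{Y}_f^I$ is legitimate (the paper is silent on it), but the missing translation argument is the essential defect to repair.
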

\begin{proof}
The first part of the lemma follows, if we show that $\mathcal{T}_{\hat{\Gamma}_D^I}(\hat{Y}_f^I) = \{0\}$.  Assume that $v \in \mathcal{T}_{\hat{\Gamma}_D}(\hat{Y}_f^I) $. 
Since $D(v) = 0$, it follows that $v$ is a rigid displacement, i.e., there exist $\alpha \in \R^n$ and $A\in \R^{n\times n}$ with $A^T = -A$, such that $v(y) = \alpha + Ay$. For every $e \in I$ and $y \in \Gamma_D$ it holds that $\hat{n} (y+e) = \hat{n}(y)$ and 
\begin{align*}
    0 = v(y+e) \cdot \hat{n}(y+e) = \underbrace{v(y)\cdot \hat{n}(y)}_{=0} + Ae \cdot \hat{n}(y)  = Ae \cdot \hat{n}(y).
\end{align*}
This implies with assumption \ref{AssumptionGeometry}
$Ae = 0$ for all $e \in I$. Since $\mathrm{span}(I)$ has dimension $n-1$ and the rank of $A$ is even ($A$ is skew-symmetric), we obtain $A=0$. Hence, we have $\alpha \cdot \hat{n} = 0$ on $\Gamma_D$, what implies $\alpha = 0$.
\\
The second statement follows easily,  since for every $u \in H_{\#}^1(Y_f)^n$ with $D(u)=0$ we have $u = \alpha \in \R^n$, and if additionally $u \cdot \hat{n} = 0$ on $\Gamma_D$ we obtain as above that $u=0$. Now, we can argue as in the proof of \cite[Lemma 3.3]{amrouche2014lp}.
\end{proof}

\begin{rem}\ 
\begin{enumerate}[label = (\roman*)]
\item Inequality $\eqref{LocalKornInequality}$ is in general not valid for $u \in H^1(Y_f)^n$. For example take $n=3$, $Y_s $ equal a ball with center $m$ strictly included in $Y$. Then for arbitrary $\beta \in \R^3$ the function 
\begin{align*}
   y \mapsto  \beta \times (y - m)
\end{align*}
is an element of $\mathcal{T}_{\Gamma_D}(Y_f)$ and inequality $\eqref{LocalKornInequality}$ does not hold. For more discussions about the space $\mathcal{T}_{\partial U} (U)$ we refer to \cite{amrouche2014lp}.
\item Lemma \ref{LemmaLocalKornInequality} is valid if $Y_s$ is strictly included in $Y$, since in this case assumption  \ref{AssumptionGeometry} is fulfilled.
\end{enumerate}
\end{rem}
Using the usual Korn inequality, we obtain for $\hat{Y}_f^I\in \mathcal{Z}_f$ and for all $u \in H^1(\hat{Y}_f^I)^n $  with $u \cdot \hat{n} = 0$ on $\hat{Gamma}^I_D$ that
\begin{align}\label{KornInequalityZeroNormalTrace}
    \Vert u \Vert_{H^1(\hat{Y}_f^I)} \le C \Vert D(u) \Vert_{L^2(\hat{Y}_f^I)}.
\end{align}
As a direct consequence we obtain by a simple decomposition argument for $\Omega^\eps$.
\begin{lem}\label{LemmaKornInequality}
For every $v^\eps \in H^1(\Omega^\eps)^n$ with $v^\eps \cdot \hat{n} =0 $ on $\Gamma_D^\eps$ it holds that
\begin{align*}
    \Vert v^\eps \Vert_{L^2(\Omega^\eps)} + \eps \Vert \nabla v^\eps \Vert_{L^2(\Omega^\eps)} \le C\eps \Vert D(v^\eps)\Vert_{L^2(\Omega^\eps)}.
\end{align*}
\end{lem}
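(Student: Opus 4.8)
The strategy is a standard scaling argument combining the local Korn inequality \eqref{KornInequalityZeroNormalTrace} with a covering of $\Omega^\eps$ by rescaled copies of the cells $\hat Y_f^I \in \mathcal{Z}_f$. First I would set up the anisotropic rescaling $y = x/\eps$. Given $v^\eps$ on $\Omega^\eps$ with $v^\eps\cdot\hat n = 0$ on $\Gamma_D^\eps$, and given a microscopic cell $\eps(Y_f + k)\subset\Omega^\eps$, I pick (as noted in the excerpt) an $\hat Y_f^I\in\mathcal{Z}_f$ with $\eps(Y_f+k)\subset\eps(\hat Y_f^I + k)\subset\Omega^\eps$ and define $w(y) := v^\eps(\eps(y+k))$ on $\hat Y_f^I$. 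Since the normal to $\eps\,\partial Y_s$ agrees (up to sign) with the normal to $\partial Y_s$, the condition $v^\eps\cdot\hat n = 0$ on $\Gamma_D^\eps$ transfers to $w\cdot\hat n = 0$ on $\hat\Gamma_D^I$, so \eqref{KornInequalityZeroNormalTrace} applies to $w$, giving $\Vert w\Vert_{H^1(\hat Y_f^I)}\le C\Vert D(w)\Vert_{L^2(\hat Y_f^I)}$ with $C$ independent of $\eps$ and $k$ (there are only finitely many configurations in $\mathcal{Z}_f$, so one takes the worst constant).

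Next I would track how the three norms scale under $y\mapsto\eps(y+k)$. With $dx = \eps^n\,dy$, one has $\Vert v^\eps\Vert_{L^2(\eps(\hat Y_f^I+k))}^2 = \eps^n\Vert w\Vert_{L^2(\hat Y_f^I)}^2$; the gradient picks up a factor $\eps^{-1}$ from the chain rule, so $\Vert\nabla v^\eps\Vert_{L^2(\eps(\hat Y_f^I+k))}^2 = \eps^{n-2}\Vert\nabla w\Vert_{L^2(\hat Y_f^I)}^2$, and likewise $\Vert D(v^\eps)\Vert_{L^2(\eps(\hat Y_f^I+k))}^2 = \eps^{n-2}\Vert D(w)\Vert_{L^2(\hat Y_f^I)}^2$. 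Substituting the local Korn estimate for $w$ and rearranging, I obtain on each enlarged cell
\[
\Vert v^\eps\Vert_{L^2(\eps(\hat Y_f^I+k))}^2 + \eps^2\Vert\nabla v^\eps\Vert_{L^2(\eps(\hat Y_f^I+k))}^2 \le C\eps^2\Vert D(v^\eps)\Vert_{L^2(\eps(\hat Y_f^I+k))}^2,
\]
with $C$ uniform in $\eps$ and $k$.

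Finally I would sum over $k$. The point is that the enlarged cells $\eps(\hat Y_f^I+k)$ have bounded overlap: each $\hat Y_f^I$ is contained in the fixed finite patch $\{-1,0,1\}^{n-1}\times\{0\} + Y$, so any given point of $\Omega^\eps$ lies in at most $N_n$ of the enlarged cells, where $N_n$ depends only on $n$. Summing the cellwise inequalities and absorbing the overlap multiplicity into the constant yields $\Vert v^\eps\Vert_{L^2(\Omega^\eps)}^2 + \eps^2\Vert\nabla v^\eps\Vert_{L^2(\Omega^\eps)}^2 \le C\eps^2\Vert D(v^\eps)\Vert_{L^2(\Omega^\eps)}^2$, and taking square roots (using $\sqrt{a+b}\le\sqrt a+\sqrt b$) gives the claimed estimate.

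The only genuinely delicate point — the ``main obstacle'' — is the covering bookkeeping near the lateral boundary $\partial\Sigma\times(-\eps,\eps)$: there one cannot use the full patch $I=\mathcal{E}$, which is exactly why $\mathcal{Z}_f$ was defined to include all admissible sub-patches $I$ with $0\in I$ and $\dim\operatorname{span}(I)=n-1$. One must check that every microscopic fluid cell, including boundary ones, is covered by $\eps(\hat Y_f^I+k)\subset\Omega^\eps$ for some such $I$, and that the number of distinct configurations (hence the number of constants to maximize over) is finite; both are immediate from the periodic structure and the assumption $\eps^{-1}\in\N$. Everything else is the routine change-of-variables scaling above.
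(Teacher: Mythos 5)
Your proposal is correct and is exactly the ``simple decomposition argument'' the paper invokes without spelling out: cover $\Omega^\eps$ by the scaled patches $\eps(\hat Y_f^I+k)$, apply the local Korn inequality \eqref{KornInequalityZeroNormalTrace} with a constant uniform over the finitely many configurations in $\mathcal{Z}_f$, track the $\eps$-scaling of the three norms, and sum using bounded overlap. The only quibble is terminological: the change of variables $y=x/\eps$ is isotropic here (the vertical direction of $Y=(0,1)^{n-1}\times(-1,1)$ is scaled by the same factor $\eps$), not anisotropic.
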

We also have the following well-known trace inequality for all $v^\eps \in H^1(\Omega^\eps)$:
\begin{align}
    \label{TraceInequality}
    \sqrt{\eps} \Vert v^\eps \Vert_{L^2(\partial \Omega^\eps)} \le C \left( \Vert v^\eps \Vert_{L^2(\Omega^\eps)} + \epsilon \Vert \nabla v^\eps\Vert_{L^2(\Omega^\eps)} \right).
\end{align}
Now, we start with the estimates for the fluid velocity $u^\eps$.
\begin{lem}
\label{AprioriEstimatesVelocity}
The microscopic fluid velocity $u^\eps$ fulfills  for $\alpha > 0$
\begin{align*}
    \frac{1}{\sqrt{\eps}} \Vert u^\eps \Vert_{L^2(\Omega^\eps)} + \sqrt{\eps} \Vert \nabla u^\eps \Vert_{L^2(\Omega^\eps)}  &\le C\eps^2,
    \\
    \Vert u^\eps \Vert_{L^2(\Gamma_D^\eps)} &\le C \min\left\{\eps^2, \eps^{\frac{3-\gamma }{2}} \right\} .
\end{align*}
For $\alpha = 0$ it holds that
\begin{align*}
    \Vert u^\eps \Vert_{L^2(\Gamma_D^\eps)} &\le C \eps^2.
\end{align*}
\end{lem}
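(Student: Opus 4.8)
The plan is the standard energy estimate: test the weak formulation \eqref{MicroModelNavierSlipVarEqu} with $\phi^\eps=\ueps$. Since $\ueps$ is itself an admissible test function ($\ueps\cdot\hat{n}=0$ on $\Gamma_D^\eps$) and divergence free, the term involving $\peps-p_b^\eps$ drops out and one is left with the identity
\begin{align*}
2\mu\|D(\ueps)\|_{L^2(\Omega^\eps)}^2+\alpha\eps^\gamma\|\ueps\|_{L^2(\Gamma_D^\eps)}^2=\int_{\Omega^\eps}(f^\eps-\nabla p_b^\eps)\cdot\ueps\,dx+\int_{\Gamma_D^\eps}g^\eps\cdot\ueps\,d\sigma .
\end{align*}
Everything then reduces to bounding the right-hand side by $\|D(\ueps)\|_{L^2(\Omega^\eps)}$ with constants independent of $\eps$, which is precisely where Lemma~\ref{LemmaKornInequality} and the trace inequality \eqref{TraceInequality} are used.

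For the volume integral I would apply Cauchy--Schwarz together with the assumptions (A1)--(A2), which give $\|f^\eps-\nabla p_b^\eps\|_{L^2(\Omega^\eps)}\le C\sqrt\eps$, and then Lemma~\ref{LemmaKornInequality} to estimate $\|\ueps\|_{L^2(\Omega^\eps)}\le C\eps\,\|D(\ueps)\|_{L^2(\Omega^\eps)}$; this term is therefore $\le C\eps^{3/2}\|D(\ueps)\|_{L^2(\Omega^\eps)}$. For the surface integral I would use Cauchy--Schwarz with (A3), which contributes the factor $\|g^\eps\|_{L^2(\Gamma_D^\eps)}\le C\eps$, and then chain \eqref{TraceInequality} with Lemma~\ref{LemmaKornInequality} to obtain $\|\ueps\|_{L^2(\Gamma_D^\eps)}\le\|\ueps\|_{L^2(\partial\Omega^\eps)}\le C\sqrt\eps\,\|D(\ueps)\|_{L^2(\Omega^\eps)}$; hence this term is again $\le C\eps^{3/2}\|D(\ueps)\|_{L^2(\Omega^\eps)}$. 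Dropping the nonnegative boundary term on the left and cancelling one power of $\|D(\ueps)\|_{L^2(\Omega^\eps)}$ yields $\|D(\ueps)\|_{L^2(\Omega^\eps)}\le C\eps^{3/2}$ for both $\alpha=0$ and $\alpha>0$, and a final application of Lemma~\ref{LemmaKornInequality} upgrades this to $\|\ueps\|_{L^2(\Omega^\eps)}+\eps\|\nabla\ueps\|_{L^2(\Omega^\eps)}\le C\eps^{5/2}$, i.e.\ the first asserted estimate.

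For the trace estimate on $\Gamma_D^\eps$ I would combine two bounds. The first is already in hand: $\|\ueps\|_{L^2(\Gamma_D^\eps)}\le C\sqrt\eps\,\|D(\ueps)\|_{L^2(\Omega^\eps)}\le C\eps^2$, which in particular settles the case $\alpha=0$. The second comes from re-examining the energy identity once $\|D(\ueps)\|_{L^2(\Omega^\eps)}\le C\eps^{3/2}$ is known: its right-hand side is then $\le C\eps^3$, so $\alpha\eps^\gamma\|\ueps\|_{L^2(\Gamma_D^\eps)}^2\le C\eps^3$ and hence $\|\ueps\|_{L^2(\Gamma_D^\eps)}\le C\eps^{(3-\gamma)/2}$ when $\alpha>0$. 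Taking the smaller of the two exponents gives the claimed $\min\{\eps^2,\eps^{(3-\gamma)/2}\}$; the second bound is the sharper one exactly when $\gamma<-1$, consistent with the no-slip degeneration announced for that regime.

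I do not expect a genuine obstacle here: the computation is essentially routine once the two $\eps$-weighted functional inequalities are in place, and the only point where the geometry of $Y_s$ enters is through Lemma~\ref{LemmaKornInequality}, hence through assumption~\ref{AssumptionGeometry}. The delicate part is merely the bookkeeping of powers of $\eps$, since both the trace and the Korn inequality are scale-sensitive. (If one wished to treat $\alpha>0$ without assumption~\ref{AssumptionGeometry}, one could instead invoke an Allaire-type Korn inequality with a boundary term, as in \cite{allaire1991homogenization}, and absorb that term into the coercive $\alpha\eps^\gamma$-contribution; under our standing hypotheses this is not needed.)
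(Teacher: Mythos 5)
Your proposal is correct and follows essentially the same route as the paper: test the weak formulation with $u^\eps$, bound the right-hand side via assumptions (A1)--(A3), the scaled Korn inequality of Lemma~\ref{LemmaKornInequality} and the trace inequality \eqref{TraceInequality} to get $\Vert D(u^\eps)\Vert_{L^2(\Omega^\eps)}\le C\eps^{3/2}$, and then read off the volume and trace bounds, with the $\eps^{(3-\gamma)/2}$ estimate coming from the retained $\alpha\eps^\gamma$-term. The bookkeeping of the powers of $\eps$ matches the paper's computation exactly.
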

\begin{proof}
We test the variational equations $\eqref{MicroModelNavierSlipVarEqu}$ with $u^\eps$ to obtain with the assumptions on $f^\eps$, $g^\eps$, and $p_b^\eps$
\begin{align*}
    2 \mu \Vert & D(u^\eps)\Vert_{L^2(\Omega^\eps)}^2 + \alpha \epsilon^\gamma \Vert u^\eps\Vert_{L^2(\Gamma_D^\eps)}^2 
    \\
    &= \int_{\Omega^\eps} (f^\eps - \nabla p_b^\eps) \cdot u^\eps dx + \int_{\Gamma_D^\eps } g^\eps \cdot u^\eps d\sigma 
    \\
    &\le \left( \Vert f^\eps \Vert_{L^2(\Omega^\eps)} + \Vert \nabla p_b^\eps \Vert_{L^2(\Omega^\eps)} \right) \Vert u^\eps \Vert_{L^2(\Omega^\eps)} + \Vert g^\eps \Vert_{L^2(\Gamma_D^\eps)} \Vert u^\eps \Vert_{L^2(\Gamma_D^\eps)}
    \\
    &\le C \sqrt{\eps} \Vert u^\eps \Vert_{L^2(\Omega^\eps)} + C \eps \Vert u^\eps \Vert_{L^2(\Gamma_D^\eps)}.
\end{align*}
Using the Korn inequality from Lemma \ref{LemmaKornInequality} and the trace inequality in $\eqref{TraceInequality}$, we obtain
\begin{align*}
     \mu \Vert & D(u^\eps)\Vert_{L^2(\Omega^\eps)}^2 + \alpha \epsilon^\gamma \Vert u^\eps\Vert_{L^2(\Gamma_D^\eps)}^2 \le C\epsilon^3.
\end{align*}
Using again the Korn inequality we get
\begin{align*}
   \frac{1}{\sqrt{\eps}} \Vert u^\eps \Vert_{L^2(\Omega^\eps)} + \sqrt{\eps} \Vert \nabla u^\eps \Vert_{L^2(\Omega^\eps)} + \eps^{\frac{\gamma +1}{2}} \Vert u^\eps \Vert_{L^2(\Gamma_D^\eps)} \le C \epsilon^2.
\end{align*}
The trace inequality $\eqref{TraceInequality}$ now implies
\begin{align*}
    \Vert u^\eps \Vert_{L^2(\Gamma_D^\eps)} \le C \eps^2.
\end{align*}
\end{proof}

It remains to estimate the pressure $p^\eps$. For this we make use of the restriction operator $R_\eps$ which was first introduced by Tartar in \cite[Appendix]{SanchezPalencia1980} for perforated domains with strictly included obstacles, and later extended in \cite{allaire1989homogenization} to domains with connected solid phase. Here, we define a restriction operator for thin layers and give a slightly different proof than in \cite{allaire1989homogenization}.


\begin{prop}\label{RestrictionOperatorGlobal}
There exists a linear operator
\[
R_\eps\colon H^1(\Omega^{\epsilon}_M,S_\eps^{\pm})^n \to H^1(\Omega^\eps, \Gamma_D^\eps)^n
\]
such that for all $v\in H^1(\Omega^{\epsilon}_M,S_\eps^{\pm})^n,$ it holds that
\begin{enumerate}[label = (\roman*)]
    \item $\nabla\cdot (R_\eps v) =\nabla\cdot v$ in $\Omega^\eps$ if $v=0$ in $\Omega^\eps_M -\Omega^\eps,$
    \item $\nabla \cdot v = 0$ in $\Omega^\eps_M$ implies $\nabla \cdot R_\eps v = 0$ in $\Omega^\eps$,
    
    \item It holds that
    \begin{align*}
        \Vert R_\eps v \Vert_{L^2(\Omega^\eps)} + \epsilon \Vert \nabla R_\eps v \Vert_{L^2(\Omega^\eps)} \le C\eps \Vert \nabla v \Vert_{L^2(\Omega^\eps_M)}.
    \end{align*}
\end{enumerate}

\end{prop}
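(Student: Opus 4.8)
The plan is to build $R_\eps$ cell-by-cell, mimicking Tartar's original construction but replacing the subtle global correction of \cite{allaire1989homogenization} by a direct use of the Bogovski\u{\i} operator on the enlarged cell clusters $\hat Y_f^I$ introduced before Lemma~\ref{LemmaLocalKornInequality}. First I would fix a reference construction on a single enlarged cell: given $v\in H^1(\hat Y_f^I)^n$, one wants a vector field $\hat R v\in H^1(\hat Y_f^I,\hat\Gamma_D^I)^n$ with $\nabla\cdot(\hat R v)=\nabla\cdot v$ up to a correction supported away from $\hat\Gamma_D^I$, and with $\hat R v = v$ near the lateral and top/bottom faces of $\hat Y_f^I$ so that the local pieces glue into a globally $H^1$ field on $\Omega^\eps$. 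Concretely, let $\theta$ be a fixed cut-off equal to $1$ in a neighborhood of $\hat\Gamma_D^I$ and vanishing near $\partial\hat Y_f^I\setminus\hat\Gamma_D^I$; set $w:=(1-\theta)v$, which already vanishes near $\hat\Gamma_D^I$, and observe $\nabla\cdot v-\nabla\cdot w=\nabla\cdot(\theta v)$ has a potentially nonzero mean over $Y_f$. Subtract a fixed divergence-free-correcting field: choose $\Phi\in H^1_0(Y_f)^n$ (the generator of a constant-in-$\x$ flux through $Y_f$, a fixed function depending only on geometry) with $\int_{Y_f}\nabla\cdot\Phi=1$, actually with $\nabla\cdot\Phi$ a fixed profile, so that $z:=\nabla\cdot v-\nabla\cdot w-(\text{mean term})\,\nabla\cdot\Phi$ has zero mean on the connected Lipschitz domain $Y_f$; then apply the Bogovski\u{\i} operator of Appendix~\ref{SectionAppendixBogovskii} to $z$ to get $B z\in H^1_0(Y_f)^n$ with $\nabla\cdot Bz=z$. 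Defining $\hat Rv:=w+(\text{mean term})\Phi+Bz$ gives a field equal to $v$ near the outer faces, vanishing on $\hat\Gamma_D^I$, with $\nabla\cdot\hat Rv=\nabla\cdot v$, and with the $L^2$ and $H^1$ bounds $\|\hat Rv\|_{H^1(\hat Y_f^I)}\le C\|\nabla v\|_{L^2(\hat Y_f^I)}$ following from continuity of the Bogovski\u{\i} operator plus a Poincaré estimate for the mean term (using that $v$ enters only through $\nabla v$, since adding a constant to $v$ must not change $\hat Rv$ beyond that constant — this is why the correction should be built from $v-\fint v$).

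Next I would pass from the fixed cell to the $\eps$-scaled layer by the standard dilation $y=x/\eps$. On each periodicity cell $\eps(Y_f+k)\subset\Omega^\eps$ pick, as guaranteed in the text, a cluster $\hat Y_f^I\in\mathcal Z_f$ with $\eps(Y_f+k)\subset\eps(\hat Y_f^I+k)\subset\Omega^\eps$, transplant the construction above to $\eps(\hat Y_f^I+k)$, and read off $R_\eps v$ on $\eps(Y_f+k)$. Because $\hat Rv$ coincides with $v$ near the faces shared between neighboring cells, the pieces agree on overlaps and assemble into $R_\eps v\in H^1(\Omega^\eps)^n$; the zero trace on $\hat\Gamma_D^I$ for each cell yields $R_\eps v=0$ on $\Gamma_D^\eps$, i.e. $R_\eps v\in H^1(\Omega^\eps,\Gamma_D^\eps)^n$. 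Scaling in the inequality $\|\hat Rv\|_{H^1}\le C\|\nabla v\|_{L^2}$ introduces the powers of $\eps$: $\|R_\eps v\|_{L^2(\eps(Y_f+k))}+\eps\|\nabla R_\eps v\|_{L^2(\eps(Y_f+k))}\le C\eps\|\nabla v\|_{L^2(\eps(\hat Y_f^I+k))}$, and summing over $k$ — each cluster overlapping a bounded number of others — gives property (iii). Property (i) is the statement that $\nabla\cdot\hat Rv=\nabla\cdot v$ cell-by-cell, which holds by construction provided $v$ vanishes outside $\Omega^\eps$ so the mean term and boundary data are consistent; property (ii) follows since $\nabla\cdot v=0$ on the whole of $\Omega^\eps_M$ forces the mean term to be zero (as $\int_{Y_f}\nabla\cdot v = \int_{\partial Y_f}v\cdot\hat n$, and the flux through the top/bottom faces $S_N^\pm$ cancels in telescoping sums while the $S_\eps^\pm$ part vanishes by the boundary condition $v\in H^1(\Omega^\eps_M,S_\eps^\pm)^n$), whence $z=0$ and $\hat Rv=w+0+0=(1-\theta)v$, which is divergence-free only if... — so here one rather argues directly that $\nabla\cdot R_\eps v = \nabla\cdot v = 0$ from property (i) applied after extending $v$ by itself on the solid part, using that $v$ is already defined on all of $\Omega^\eps_M$.

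The main obstacle I anticipate is exactly the bookkeeping at the lateral boundary $\partial\Sigma\times(-\eps,\eps)$: for cells touching $\partial\Sigma$ the full neighbor cluster $\mathcal E$ is unavailable, so one must check that the smaller admissible clusters in $\mathcal Z_f$ still (a) are connected Lipschitz domains on which Bogovski\u{\i} is bounded with a constant uniform over the finitely many cluster shapes, and (b) share enough face with the target cell $Y_f$ that $\hat Rv$ can be made to equal $v$ on all faces that are interior to $\Omega^\eps$, so the global gluing produces an $H^1$ field with no spurious jump across cell interfaces. A secondary subtlety is ensuring that $R_\eps v$ depends on $v$ only through $\nabla v$ in the right way so that the bound in (iii) has $\nabla v$ and not $v$ on the right — this forces the one nontrivial scalar correction (the flux/mean term) to be expressed via $\int_{\partial Y_f} v\cdot\hat n$ rather than $\int_{Y_f}\nabla\cdot v$ naively, and to be estimated by a trace/Poincaré inequality invariant under $v\mapsto v+\text{const}$. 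Once these uniformity points are settled, (i)–(iii) drop out of the per-cell estimates by scaling and summation.
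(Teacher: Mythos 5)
Your overall architecture (local construction on a reference cell, Bogovski\u{\i} correction of the divergence, gluing via matching traces on shared faces, then scaling and summing) is the same as the paper's, but there is a genuine flaw at the heart of your local step. You require the local restriction to (a) vanish on the inner boundary $\Gamma_D$, (b) agree with $v$ on the lateral faces and vanish on $S^{\pm}$, and (c) satisfy $\nabla\cdot(\hat Rv)=\nabla\cdot v$ \emph{exactly} in $Y_f$. These three demands are incompatible for general $v$: by the divergence theorem, (a) and (b) force $\int_{Y_f}\nabla\cdot(\hat Rv)\,dy=\int_{\mathrm{lateral}}v\cdot\hat n\,d\sigma$, whereas $\int_{Y_f}\nabla\cdot v\,dy=\int_{\mathrm{lateral}}v\cdot\hat n\,d\sigma+\int_{\Gamma_D}v\cdot\hat n\,d\sigma$, and the last term equals $-\int_{Y_s}\nabla\cdot v\,dx\neq 0$ in general. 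Your proposed escape hatch, a fixed field $\Phi\in H^1_0(Y_f)^n$ with $\int_{Y_f}\nabla\cdot\Phi\,dy=1$, cannot exist for the same reason: any $H^1_0(Y_f)$ field has zero total divergence. So the "mean term" cannot be absorbed this way, and your treatment of property (ii) — which you yourself leave unresolved mid-sentence — inherits the problem.

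The paper's proof resolves exactly this obstruction by \emph{not} insisting on exact divergence preservation: the local restriction satisfies $\nabla\cdot(Rv)=\nabla\cdot v+\tfrac{1}{|Y_f|}\int_{Y_s}\nabla\cdot v\,dx$, i.e.\ the flux lost into the solid part is redistributed as a constant over $Y_f$. This weaker identity still yields (i) (if $v=0$ in the solid part the extra term vanishes) and (ii) (if $\nabla\cdot v=0$ everywhere it vanishes too), which is all the proposition claims. Two further, smaller discrepancies: the paper does not keep $Rv=v$ on the lateral faces but replaces $v$ there by a smooth flux-preserving field $\hat v=\sum_i(\int_{S_i}v\cdot\hat n\,d\sigma)\phi_i\hat n_i$, with compatibility of traces across adjacent cells guaranteed by the symmetry conditions imposed on the $\phi_i$; and the construction lives on single cells $Y$, not on the clusters $\hat Y_f^I$ (those are only needed for the Korn inequality). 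Your face-matching variant would be fine if you replaced the impossible $\Phi$ by the paper's constant shift of the divergence; as written, the local step does not go through.
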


\begin{proof}
 As in [Tar80] the restrictions are constructed locally. Moreover, some aspects of the extended result in \cite{allaire1989homogenization} have been simplified.

  Let $\{S_i\}_{i\in I},$ $I=\{\pm 1,\pm 2\, \ldots , \pm (n-1) \},$ be an enumeration of the lateral faces of the  cell $Y$ so that $S_i$ is opposite to $S_{-i}$ and with outward unit normal $\hat{n}_i.$ Subordinate to $\{S_{\pm i}\}_{i\in I}$ we choose a family of functions $\{\phi_{\pm i}\}_{i\in I}$ such that for each $i\in I,$
 \begin{itemize}
\item $\phi_i\in C^\infty(\overline{Y}),$ $\phi_i=0$ in $Y_s$
\item $\phi_i=0$ on $S_j,$ whenever $i\neq j,$
\item $\phi_i\rvert_{S_i}=\phi_{-i}\circ \tau\rvert_{S_i},$ where $\tau$ is the canonical translation that maps $S_i$ to $S_{-i},$
\item $\int_{S_i} \phi_i\,d\sigma=1.$
\end{itemize}

A local restriction of any $v\in H^1(Y,S^{\pm})^n$ is constructed as follows. Let $\hat{v}$ be the vector field defined by
\begin{equation}
\hat{v}(y)=\sum_{i\in I} \left(\int_{S_i} v\cdot \hat{n}\,d\sigma\right)\phi_i(y)\hat{n}_i,\quad (y\in Y)
\end{equation}
Then $\hat{v}\in C^\infty(\overline{Y})^n,$ $\hat{v}=0$ in $Y_s$ and since the outward unit normal $\hat{n}_i$ is a constant vector, $\hat{v}$ preserves the outward flux of $v$ through each lateral face of $Y,$ i.e.
\begin{equation}\label{vflux:eq}
\int_{S_i} \hat{v}\cdot \hat{n}\,d\sigma=\int_{S_i} v\cdot \hat{n}\,d\sigma\quad\text{($i=1,\dotsc,n-1$)}.
\end{equation}
In general, $\hat{v}$ does not  preserve the divergence of $v$ in $Y_f.$ However, from \eqref{vflux:eq} and the divergence theorem, we deduce
\[
\int_{Y_{f}} \nabla\cdot\hat{v}\,dx=\int_{Y_{f}} \nabla\cdot v\,dx+\int_{Y_{s}} \nabla\cdot v\,dx.
\]
Here we used that $v = 0$ on $S^{\pm}$.
Set
\[
f=\nabla\cdot v+\frac{1}{\lvert{Y_{f}} \rvert  }  \left(\int_{Y_{s}}\nabla\cdot v\,dx\right)-\nabla\cdot\hat{v}.
\]
From the Bogovski\u{i} theorem, see the Appendix \ref{SectionAppendixBogovskii},
the boundary value problem
\[
\left\{
\begin{aligned}
\nabla\cdot w& =f&& \text{in }Y_f\\
w& =0 && \text{on }\partial Y_f
\end{aligned}\right.
\]
has a solution $w =Bf \in H_0^1(Y_f)^n$, which we extend by zero to the whole cell $Y$ and fulfills
\begin{align*}
    \Vert \nabla w \Vert_{L^2(Y_f)} \le C \Vert f\Vert_{L^2(Y_f)}.
\end{align*}
Now we define the restriction as
\begin{equation}
Rv=\hat{v}+w.   
\end{equation}
It is readily verified that $Rv$ depends linearly on $v$ and has the following properties:
\begin{itemize}
    \item[(i)] $Rv=0$ in $Y_s$
    \item[(ii)] $\displaystyle\nabla\cdot(Rv)=\nabla\cdot{v}+\frac{1}{\lvert Y_{f}\rvert}\int_{Y_{s}}\nabla\cdot{v}\,dx$ 
    \item[(iii)] $Rv=\hat{v}$ on $\partial Y$
    \item[(iv)] There exists a constant $C$ depending only on $Y$ and $Y_s$ such that
    \[
    \rVert Rv\rVert_{H^1(Y_f)}\leq C\rVert \nabla v\rVert_{L^2(Y)}.
    \]
\end{itemize}

By scaling and translating the cell $Y$ we obtain a family of cells
\[
Y^{\eps,k}=\{y\in \mathbb{R}^n: y=\eps(z+k),\:z\in Y\},\quad k\in \mathbb{Z}^{n-1}\times \{0\}.
\]
For $Y^{\eps,k}$ contained in $\Omega_M^\eps$ we can define a family of local restrictions
\[
R_{\eps,k}\colon H^1(\Omega_M^\eps,S_\eps^\pm)^n\to H^1(Y^{\eps,k},\partial Y_s^{\eps,k})^n,
\]
according to the scheme described above.  The property (iii) ensures that the boundary values of restricted functions are compatible on the faces of adjacent cells. Thus, after a finite number of restrictions, we obtain a global restriction operator $R_\eps$ from $H^1(\Omega_M^\eps)^n$ to $H^1(\Omega^\eps,\Gamma_D^\eps)^n$ that satisfies the properties (i)-(iii).

\end{proof}

\begin{rem}\ 
\begin{enumerate}[label = (\roman*)] 
    \item We emphasize that, in contrast to the restriction operator constructed in \cite{allaire1989homogenization}, our restriction operator does not fulfill $R_{\epsilon} v = v$ for $v \in H^1(\Omega_M^{\epsilon},S_{\epsilon}^{\pm})$ with $v=0 $ in $\Omega_M^{\epsilon} - \Omega^{\epsilon}$, but only the weaker condition $\nabla \cdot R_{\epsilon} v = \nabla \cdot v$, which simplifies the proof. This is enough to extend the pressure to the whole layer and obtain uniform \textit{a priori} bounds in $L^2$. 
    \item The restriction operator is not adapted to the zero normal flux boundary conditions on $\Gamma_D^{\epsilon}$. In fact, it might be more intuitive with respect to the micro-model to define the restriction operator in such way that $R_{\epsilon } v \cdot \hat{n} = 0$ on $\Gamma_D^{\epsilon}$. This is not necessary since we define the extension of the pressure as a gradient in the distributional sense.
\end{enumerate}

\end{rem}

In the following for a Banach space $X$ we denote the duality pairing $\langle \cdot , \cdot \rangle_{X',X}$ with its dual space $X'$ shortly by $\langle \cdot , \cdot \rangle_X$. Lemma \ref{LemmaBasicTSCompactness} implies that the divergence operator (we neglect its dependence on the domain of definition)
\begin{align*}
    \diver : H^1(\Omega^\eps_M , S_\eps^\pm )^n \rightarrow L^2(\Omega^\eps_M)
\end{align*}
is surjective  and for every $\epsilon$ there exists a constant $C_\eps $ such that for every $P^\eps \in L^2(\Omega^\eps_M)$ there exists  $v^\eps \in H^1(\Omega^\eps_M,S_\eps^\pm)^n$ such that
\begin{align*}
   \epsilon \Vert \nabla v^\eps \Vert_{L^2(\Omega^\eps_M)} \le C_\eps \Vert P^\eps \Vert_{L^2(\Omega^\eps_M)}.
\end{align*}
We emphasize that by the Poincar\'e inequality  the left-hand side defines a norm on $H^1(\Omega^\eps_M)^n$. More precisely, it holds that
\begin{align*}
    \Vert v^\eps \Vert_{L^2(\Omega^\eps_M)}  \le C \epsilon \Vert \nabla v^\eps \Vert_{L^2(\Omega^\eps_M)}.
\end{align*}
By a simple scaling  for the $x_3$-component we can transform $\Omega^\eps_M$ to the fixed domain $\Omega^1_M$. Then it is easy to check that 
\begin{align*}
    C_\eps \le C C_1 \qquad \text{for all } \epsilon.
\end{align*}
Hence, we obtain for a constant $C $ independent of $\epsilon$
\begin{align}\label{InequalityBogovskii}
   \epsilon \Vert \nabla v^\eps \Vert_{L^2(\Omega^\eps_M)} \le C \Vert P^\eps \Vert_{L^2(\Omega^\eps_M)}.
\end{align}
We define the space $H_\eps$ as the space of functions in $H^1(\Omega^\eps_M , S_\eps^\pm )^n$ with the norm $\epsilon \Vert \nabla v^\eps \Vert_{L^2(\Omega^\eps_M)}$.
By the surjectivity of $\diver$ and the closed range theorem, we obtain for every $F^\eps \in H_\eps'$ 
with
\begin{align*}
    \langle F^\eps , v^\eps \rangle_{H_\eps} = 0 
\end{align*}
for all $v^\eps \in H^1(\Omega^\eps_M , S_\eps^\pm )^n$ with $\nabla \cdot v^\eps = 0$ the existence of $P^\eps \in L^2(\Omega^\eps_M)$ such that
\begin{align}\label{RepresentationFeps}
     \langle F^\eps , v^\eps \rangle_{H_\eps} = - \int_{\Omega^\eps_M} P^\eps \nabla \cdot v^\eps dx \qquad \mbox{ for all } v^\eps \in  H^1(\Omega^\eps_M , S_\eps^\pm )^n.
\end{align}
In other words, we have $F^{\epsilon} =- \diver^{\ast} P_{\epsilon}$, where $\diver^{\ast}$ denotes the adjoint of $\diver: H_\eps \rightarrow L^2(\Omega^\eps_M)$. Using inequality $\eqref{InequalityBogovskii}$, we easily obtain 
\begin{align}\label{InequalityL2Gradient}
    \Vert P^\eps \Vert_{L^2(\Omega^\eps_M)} \le C \Vert F^\eps \Vert_{H_\eps'}.
\end{align}
Now we are able to give an estimate for the microscopic pressure $p^\eps$:
\begin{lem}
\label{AprioriEstimatePressure}
There exists an extension $P^\eps \in L^2(\Omega^\eps_M)$ of $p^\eps - p_b^\eps$ such that
\begin{align*}
    \Vert P^\eps \Vert_{L^2(\Omega^\eps_M)} \le C \sqrt{\eps}.
\end{align*}
Especially, we obtain 
\begin{align*}
    \Vert p^\eps \Vert_{L^2(\Omega^\eps)} \le  C \sqrt{\eps}.
\end{align*}
Moreover
\[
P^\eps=
\left\{
\begin{aligned}
&p^\eps-p_b^\eps && \text{in }\Omega^\eps,\\
&\frac{1}{\lvert \eps Y_f\rvert}\int_{\eps(Y_f+k)} (p^\eps-p_b^\eps)\,dx && \text{in }\eps(Y_s+k),
\end{aligned}\right.
\]
for all $k\in \mathbb{Z}^{n-1}\times \{0\}$ such that the cell $\eps(Y+k)$ is contained in $\Omega_M^\eps.$
\end{lem}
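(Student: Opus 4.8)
The plan is to use the abstract duality framework established just before the statement — namely the factorization $F^\eps = -\diver^\ast P^\eps$ together with the uniform estimate $\eqref{InequalityL2Gradient}$ — applied to the functional $F^\eps$ that arises from the weak formulation $\eqref{MicroModelNavierSlipVarEqu}$ composed with the restriction operator $R_\eps$ of Proposition~\ref{RestrictionOperatorGlobal}. Concretely, for $v^\eps \in H_\eps = H^1(\Omega^\eps_M, S_\eps^\pm)^n$ I would define
\begin{align*}
    \langle F^\eps, v^\eps \rangle_{H_\eps} := 2\mu \int_{\Omega^\eps} D(u^\eps):D(R_\eps v^\eps)\,dx + \alpha \eps^\gamma \int_{\Gamma_D^\eps} u^\eps \cdot R_\eps v^\eps\,d\sigma - \int_{\Omega^\eps} (f^\eps - \nabla p_b^\eps)\cdot R_\eps v^\eps\,dx - \int_{\Gamma_D^\eps} g^\eps \cdot R_\eps v^\eps\,d\sigma.
\end{align*}
Since $R_\eps v^\eps \in H^1(\Omega^\eps, \Gamma_D^\eps)^n$ vanishes on $\Gamma_D^\eps$, it is an admissible test function in $\eqref{MicroModelNavierSlipVarEqu}$; the boundary integrals over $\Gamma_D^\eps$ involving $R_\eps v^\eps$ therefore drop out, and $\eqref{MicroModelNavierSlipVarEqu}$ gives $\langle F^\eps, v^\eps\rangle_{H_\eps} = \int_{\Omega^\eps}(p^\eps - p_b^\eps)\,\nabla\cdot(R_\eps v^\eps)\,dx$.

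Next I would check that $F^\eps$ annihilates divergence-free fields: if $\nabla\cdot v^\eps = 0$ in $\Omega^\eps_M$, then by property~(ii) of Proposition~\ref{RestrictionOperatorGlobal} we get $\nabla\cdot(R_\eps v^\eps) = 0$ in $\Omega^\eps$, hence $\langle F^\eps, v^\eps\rangle_{H_\eps} = 0$. Thus the abstract result $\eqref{RepresentationFeps}$ produces $P^\eps \in L^2(\Omega^\eps_M)$ with $\langle F^\eps, v^\eps\rangle_{H_\eps} = -\int_{\Omega^\eps_M} P^\eps \nabla\cdot v^\eps\,dx$ for all $v^\eps \in H_\eps$. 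To identify $P^\eps$ with $p^\eps - p_b^\eps$ on $\Omega^\eps$ and with the stated cell-averages on the solid cells, I would test against $v^\eps$ supported in a single scaled cell $\eps(Y+k)\subset\Omega^\eps_M$ and against $v^\eps$ vanishing in $\Omega^\eps_M - \Omega^\eps$: in the latter case property~(i) gives $\nabla\cdot(R_\eps v^\eps) = \nabla\cdot v^\eps$ on $\Omega^\eps$, so $\int_{\Omega^\eps_M} P^\eps \nabla\cdot v^\eps = \int_{\Omega^\eps}(p^\eps - p_b^\eps)\nabla\cdot v^\eps$, forcing $P^\eps = p^\eps - p_b^\eps$ a.e.\ in $\Omega^\eps$ (both sides being in $L^2(\Omega^\eps)$ and the relation holding against all such $v^\eps$ whose divergences span $L^2(\Omega^\eps)$ up to the cell-constant constraint). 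Localizing to $\eps(Y+k)$ and using property~(ii) of the \emph{local} restriction — $\nabla\cdot(Rv) = \nabla\cdot v + |Y_f|^{-1}\int_{Y_s}\nabla\cdot v\,dx$ — pins down the value of $P^\eps$ on the solid part $\eps(Y_s+k)$ as the displayed average, after a change of variables; this is essentially a rescaling of the computation underlying Allaire's extension.

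Finally, for the quantitative bound I would combine $\eqref{InequalityL2Gradient}$, $\Vert P^\eps\Vert_{L^2(\Omega^\eps_M)} \le C\Vert F^\eps\Vert_{H_\eps'}$, with an estimate of the norm of $F^\eps$: using property~(iii) of Proposition~\ref{RestrictionOperatorGlobal}, $\Vert R_\eps v^\eps\Vert_{L^2(\Omega^\eps)} + \eps\Vert\nabla R_\eps v^\eps\Vert_{L^2(\Omega^\eps)} \le C\eps\Vert\nabla v^\eps\Vert_{L^2(\Omega^\eps_M)} = C\Vert v^\eps\Vert_{H_\eps}$, plus the trace inequality $\eqref{TraceInequality}$ on $R_\eps v^\eps$ to control $\Vert R_\eps v^\eps\Vert_{L^2(\Gamma_D^\eps)}$ by $\eps^{-1/2}\Vert v^\eps\Vert_{H_\eps}$, the a priori bounds of Lemma~\ref{AprioriEstimatesVelocity} for $\Vert D(u^\eps)\Vert_{L^2(\Omega^\eps)}$ and $\Vert u^\eps\Vert_{L^2(\Gamma_D^\eps)}$ (noting $\alpha\eps^\gamma\Vert u^\eps\Vert_{L^2(\Gamma_D^\eps)}\le C\eps^{(\gamma+3)/2}\le C\eps^{3/2}$ when $\gamma\ge-1$, and the term is absent when $\alpha=0$), and the data assumptions (A1)--(A3) giving $\Vert f^\eps - \nabla p_b^\eps\Vert_{L^2(\Omega^\eps)} \le C\sqrt\eps$ and $\Vert g^\eps\Vert_{L^2(\Gamma_D^\eps)}\le C\eps$. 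Each of the four terms defining $\langle F^\eps, v^\eps\rangle_{H_\eps}$ is then $\le C\sqrt\eps\,\Vert v^\eps\Vert_{H_\eps}$, so $\Vert F^\eps\Vert_{H_\eps'}\le C\sqrt\eps$ and hence $\Vert P^\eps\Vert_{L^2(\Omega^\eps_M)}\le C\sqrt\eps$; restricting to $\Omega^\eps$ gives $\Vert p^\eps - p_b^\eps\Vert_{L^2(\Omega^\eps)}\le C\sqrt\eps$, and $\Vert p^\eps\Vert_{L^2(\Omega^\eps)}\le C\sqrt\eps$ follows from assumption \ref{AssumptionPressure}. I expect the main obstacle to be the bookkeeping in the identification of $P^\eps$ on the solid cells: one must carefully track how the local restriction's divergence defect $|Y_f|^{-1}\int_{Y_s}\nabla\cdot v$ propagates through the global gluing and the $\eps$-rescaling, and verify that the resulting piecewise-constant values on $\eps(Y_s+k)$ are exactly the stated cell averages rather than some other affine combination — this is the only place where property~(ii) of the local (as opposed to global) restriction is used in an essential, non-trivial way.
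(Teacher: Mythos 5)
Your plan is correct and is essentially the paper's own proof: the paper likewise defines a functional $F^\eps\in H_\eps'$ through the restriction operator (but with the opposite sign, $\langle F^\eps,v^\eps\rangle_{H_\eps}:=-(p^\eps-p_b^\eps,\nabla\cdot R_\eps v^\eps)_{L^2(\Omega^\eps)}$, which you should adopt so that $\eqref{RepresentationFeps}$ returns $P^\eps=p^\eps-p_b^\eps$ rather than its negative), rewrites it via the weak formulation $\eqref{MicroModelNavierSlipVarEqu}$, shows it vanishes on divergence-free fields, bounds $\Vert F^\eps\Vert_{H_\eps'}\le C\sqrt{\eps}$ from Lemma~\ref{AprioriEstimatesVelocity}, the data assumptions and Proposition~\ref{RestrictionOperatorGlobal}(iii), and identifies $P^\eps$ cellwise exactly as you describe. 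Note also that the boundary integrals over $\Gamma_D^\eps$ are identically zero because $R_\eps v^\eps\in H^1(\Omega^\eps,\Gamma_D^\eps)^n$, so the trace estimate and the case distinction in $\gamma$ you sketch for those terms are superfluous.
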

\begin{proof}
 We define the functional $F^\eps \in H_\eps'$ by 
 \begin{align*}
     \langle F^\eps , v^\eps \rangle_{H_\eps} := -(p^\eps - p_b^\eps , \nabla \cdot R_\eps v^\eps)_{L^2(\Omega^\eps)},
 \end{align*}
i.e., we have $ \langle F^\eps , v^\eps \rangle_{H_\eps} =  \langle \diver^{\ast} (p^\eps - p_b^\eps), R_\eps v^\eps)_{L^2(\Omega^\eps)}$. Hence, using the microscopic equation $\eqref{MicroModelNavierSlipVarEqu}$, we get for all $v^\eps \in H^1(\Omega^\eps_M,S_\eps^\pm)^n$ (using $R_\eps v^\eps = 0$ on $\Gamma_D^\eps$)
 \begin{align*}
     \langle F^\eps , v^\eps \rangle_{H_\eps} =& -2\mu \int_{\Omega^\eps } D(u^\eps): D(R_\eps v^\eps) dx + \int_{\Omega^\eps} (f^\eps - \nabla p_b^\eps) \cdot R_\eps v^\eps dx .
 \end{align*}
Using the \textit{a priori} estimates from Lemma \ref{AprioriEstimatesVelocity} for the fluid velocity, the assumptions on the data, and the properties of the restriction operator $R_\eps $ from Proposition \ref{RestrictionOperatorGlobal} we obtain
\begin{align}
    \vert \langle F^\eps , v^\eps \rangle_{H_\eps} \vert &\le C \eps^{\frac32} \Vert \nabla R_\eps v^\eps  \Vert_{L^2(\Omega^\eps)} + \sqrt{\eps} \Vert R_\eps v^\eps  \Vert_{L^2(\Omega^\eps)}
    \\
    &\le C \eps^{\frac32} \Vert \nabla v^\eps \Vert_{L^2(\Omega^\eps_M) } = C \sqrt{\eps}\Vert v^\eps \Vert_{H_\eps} .
\end{align}
Hence, we have 
\begin{align*}
    \Vert F^\eps \Vert_{H_\eps'} \le C \sqrt{\eps}.
\end{align*}
Further, for all $v^\eps \in H^1(\Omega^\eps_M,S_\eps^{\pm})^n$ with $\nabla \cdot v^\eps = 0$ we obtain from the microscopic variational equation $\eqref{MicroModelNavierSlipVarEqu}$ that $\langle F^\eps , v^\eps \rangle_{H_\eps} = 0$. Hence, there exists $P^\eps \in L^2(\Omega^\eps_M)$ which fulfills $\eqref{RepresentationFeps}$. Especially, we obtain using inequality $\eqref{InequalityL2Gradient}$ 
\begin{align*}
     \Vert P^\eps \Vert_{L^2(\Omega^\eps_M)} \le C \Vert F^\eps \Vert_{H_\eps'} \le  C \sqrt{\eps}.
\end{align*}
It remains to check  that $P^\eps$ is an extension of $p^\eps - p_b^\eps$. 
For every $\veps \in H^1(\Omega_M^\eps, S_{\epsilon}^{\pm})^n$ it holds that
\begin{align*}
    -(P^\eps , \nabla \cdot \veps )_{L^2(\Omega_M^\eps)} = \langle F^\eps , \veps \rangle_{H_\eps} &= -(p^\eps -p^\eps_b , \nabla \cdot R_\eps \veps )_{L^2(\Omega^\eps)}
\end{align*}
Since the divergence operator is onto, for any $f\in L^2(\epsilon(Y+k))$ extended by zero to the whole layer $\Omega_M^{\epsilon}$, there exists $v^\eps\in H^1(\Omega_M^\eps, S_\eps^\pm)$ such that 
\[
\nabla\cdot v^\eps = f \mbox{ in } \Omega_M^\eps.
\]
For such $v^\eps$ it holds that
\begin{align*}
\int_{\eps(Y+k)} P^\eps\nabla \cdot \veps \,dx& =\int_{{\eps(Y_f+k)}}(p^\eps -p^\eps_b)\nabla\cdot (R_\eps v^\eps)\,dx\\
& =\int_{{\eps(Y_f+k)}}(p^\eps -p^\eps_b)\left(\nabla\cdot v^\eps+\frac{1}{\lvert\eps Y_f\rvert}\int_{\eps(Y_s+k)}\nabla\cdot v^\eps\,dx\right)\,dx\\
& =\int_{{\eps(Y_f+k)}}(p^\eps -p^\eps_b)\nabla\cdot v^\eps\,dx\\
&\quad +\int_{\eps(Y_s+k)}\frac{1}{\lvert\eps Y_f\rvert}\left(\int_{{\eps(Y_f+k)}}(p^\eps -p^\eps_b)\,dx\right)\nabla\cdot v^\eps\,dx.
\end{align*}
This implies the desired extension property.

\end{proof}


With the uniform estimates for $\ueps$ and $\peps$ obtained above and the general two-scale compactness results summarized in the Appenix \ref{SectionTSConvergence}, we immediately obtain the following convergence result.

\begin{lem}\label{ConvergenceResults}
There exists  $(u_0, p_0) \in L^2(\Sigma,H_{\#}^1(Y_f))^n \times L^2(\Sigma)$ with $\nabla_y \cdot u_0 = 0$ and $\nabla_{\x} \cdot \int_{Y_f} u_0 dy = 0$, such that up to a subsequence
\begin{align*}
   \chi_{\Omega^{\epsilon}} \frac{u^\eps}{\eps^2} &\rats \chi_{Y_f}u_0,
    \\
   \chi_{\Omega^{\epsilon}} \eps \frac{\nabla u^\eps } {\eps^2} &\rats \chi_{Y_f}\nabla_y u_0,
    \\
   \chi_{\Omega^{\epsilon}} p^\eps &\rats \chi_{Y_f}p_0.
\end{align*}
Further it holds that up to a subsequence
\begin{align*}
    \frac{u^\eps \vert_{\Gamma_D^\eps}}{\eps^2} \rats u_0\vert_{\Gamma_D} \qquad\text{on } \Gamma_D^\eps.
\end{align*}
Additionally, it holds that $u_0 \cdot \hat{n} = 0$ on $\Gamma_D$.
For $\alpha > 0$ and  $\gamma < -1$  we have up to a subsequence
\begin{align*}
    \frac{u^\eps \vert_{\Gamma_D^\eps}}{\epsilon^2} \rats 0 \qquad\text{on } \Gamma_D^\eps.
\end{align*}
Especially, we obtain $u_0 = 0$ on $\Gamma_D$ for $\alpha > 0$ and $\gamma < -1$.
\end{lem}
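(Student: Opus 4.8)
The plan is to apply the abstract two-scale compactness machinery from Appendix~\ref{SectionTSConvergence} to the sequences $\eps^{-2}\ueps$, $\eps^{-1}\nabla\ueps/\eps^{2}$, $\peps$, and the trace $\eps^{-2}\ueps|_{\Gamma_D^\eps}$, using precisely the \textit{a priori} bounds established in Lemmas~\ref{AprioriEstimatesVelocity} and~\ref{AprioriEstimatePressure}. First I would observe that Lemma~\ref{AprioriEstimatesVelocity} gives $\eps^{-1}\|\eps^{-2}\ueps\|_{L^2(\Omega^\eps)}\le C$ and $\sqrt{\eps}\,\|\eps^{-1}\nabla(\eps^{-2}\ueps)\|_{L^2(\Omega^\eps)}\le C$, i.e. both $\eps^{-2}\ueps$ and $\eps\,\nabla(\eps^{-2}\ueps)$ are bounded in the scaled $L^2$-norm $\eps^{-1/2}\|\cdot\|_{L^2(\Omega^\eps)}$ appropriate for thin perforated layers; likewise Lemma~\ref{AprioriEstimatePressure} gives $\eps^{-1/2}\|\peps\|_{L^2(\Omega^\eps)}\le C$. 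By the general two-scale compactness result (the analogue of the standard Stokes two-scale lemma, valid here for thin layers), there is a subsequence and a limit $u_0\in L^2(\Sigma,H^1_{\#}(Y_f))^n$ with $\chi_{\Omega^\eps}\eps^{-2}\ueps\rats\chi_{Y_f}u_0$ and $\chi_{\Omega^\eps}\eps\,\nabla(\eps^{-2}\ueps)\rats\chi_{Y_f}\nabla_y u_0$, and a limit $p_0\in L^2(\Sigma)$ with $\chi_{\Omega^\eps}\peps\rats\chi_{Y_f}p_0$.

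Next I would extract the two divergence constraints. Since $\nabla\cdot\ueps=0$ in $\Omega^\eps$, testing against $\eps\,\psi(\x)\,\phi(x/\eps)$ with $\phi\in C^\infty_{\#}(Y)^n$, $\phi=0$ near $\Gamma_D$, and passing to the two-scale limit yields $\nabla_y\cdot u_0=0$ in $\Sigma\times Y_f$; testing instead against $\psi(\x)$ with no oscillation (and using the scaling $\eps\nabla\ueps\sim\eps^2$ so that the $y$-derivative term contributes $\nabla_y$ of something that integrates to zero over $Y_f$ by periodicity) gives, after integrating over $Y_f$, the macroscopic constraint $\nabla_{\x}\cdot\int_{Y_f}u_0\,dy=0$ in $\Sigma$. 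For the trace statements I would use the two-scale trace compactness result for oscillating boundaries: the bound $\|\eps^{-2}\ueps\|_{L^2(\Gamma_D^\eps)}\le C$ from Lemma~\ref{AprioriEstimatesVelocity} (valid for $\alpha=0$, and for $\alpha>0$ via the $\eps^2$ part of the min) together with the interior two-scale convergence and the trace theorem identifying two-scale limits of traces with traces of two-scale limits, gives $\eps^{-2}\ueps|_{\Gamma_D^\eps}\rats u_0|_{\Gamma_D}$; the normal-flux boundary condition $\ueps\cdot\hat n=0$ on $\Gamma_D^\eps$ then passes to the limit to give $u_0\cdot\hat n=0$ on $\Gamma_D$.

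Finally, for the case $\alpha>0$ and $\gamma<-1$ I would revisit the sharper trace estimate $\|\ueps\|_{L^2(\Gamma_D^\eps)}\le C\eps^{(3-\gamma)/2}$ from Lemma~\ref{AprioriEstimatesVelocity}. Dividing by $\eps^2$ gives $\|\eps^{-2}\ueps\|_{L^2(\Gamma_D^\eps)}\le C\eps^{(3-\gamma)/2-2}=C\eps^{-(1+\gamma)/2}$, and since $\gamma<-1$ the exponent $-(1+\gamma)/2$ is strictly positive, so the scaled trace norm $\eps^{-1/2}\|\eps^{-2}\ueps\|_{L^2(\Gamma_D^\eps)}$ tends to $0$; hence the two-scale limit of the trace is $0$, and by the trace-identification above $u_0=0$ on $\Gamma_D$. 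The main obstacle is the correct bookkeeping of scalings in the thin-layer two-scale framework — making sure the factors of $\eps$ in the volume element ($|\Omega^\eps|=\mathcal O(\eps)$), in the gradient scaling ($\eps\nabla\leftrightarrow\nabla_y$), and in the surface measure ($|\Gamma_D^\eps|=\mathcal O(1)$, so the natural scaled trace norm has no extra $\eps$) all match the normalizations in Appendix~\ref{SectionTSConvergence} so that the abstract compactness lemmas apply verbatim; once that is set up, each convergence is a routine application together with a standard density/test-function argument and the two-scale trace theorem.
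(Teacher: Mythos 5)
Your overall strategy coincides with the paper's: the interior convergences follow from Lemma~\ref{AprioriEstimatesVelocity}, Lemma~\ref{AprioriEstimatePressure} and the compactness results of Appendix~\ref{SectionTSConvergence}; the divergence constraints come from testing $\nabla\cdot\ueps=0$ with (oscillating) test functions; the trace convergence follows from the uniform bound $\|\eps^{-2}\ueps\|_{L^2(\Gamma_D^\eps)}\le C$ together with Lemma~\ref{LemmaBasicTSCompactness} and identification of the two-scale limit of the trace with the trace of $u_0$; and the vanishing of the trace for $\alpha>0$, $\gamma<-1$ is exactly the exponent computation $\eps^{-2}\|\ueps\|_{L^2(\Gamma_D^\eps)}\le C\eps^{-(1+\gamma)/2}\to 0$.

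There is, however, one step you skip over that is a genuine (if standard) part of the argument: you assert that compactness yields a pressure limit $p_0\in L^2(\Sigma)$, but the abstract compactness lemma only produces a limit $p_0\in L^2(\Sigma\times Y_f)$ with $\chi_{\Omega^\eps}\peps\rats\chi_{Y_f}p_0$. The independence of $p_0$ from the fast variable $y$ is not a consequence of the \textit{a priori} bound alone; it has to be extracted from the equation. The paper does this by inserting the test function $\phi^\eps(x)=\phi(\x)\psi(x/\eps)$ with $\phi\in C_0^\infty(\Sigma)$ and $\psi\in C_0^\infty(Y_f)^n$ into \eqref{MicroModelNavierSlipVarEqu}: all terms except the pressure term vanish in the limit because of the scalings of $\ueps$ and the data, and one is left with $\int_\Sigma\int_{Y_f}p_0\,\nabla_y\cdot\psi\,dy\,d\x=0$, i.e.\ $\nabla_y p_0=0$ and hence $p_0=p_0(\x)$. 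Your proposal should include this (or an equivalent) argument before claiming $p_0\in L^2(\Sigma)$; with that addition it matches the paper's proof.
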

\begin{proof}
 The convergences of $\eps^{-2} u^\eps$ and the gradients follow directly from  the \textit{a priori} estimates in Lemma \ref{AprioriEstimatesVelocity} and Lemma \ref{TwoScaleCompactnessPerforated} in the appendix. The equations $\nabla_y \cdot u_0 = 0$ and $\nabla_{\x} \cdot \int_{Y_f} u_0 dy = 0$ are quite standard, see \cite{Allaire_TwoScaleKonvergenz} for similar arguments. For the pressure we use the \textit{a priori} bound of $p^\eps$ from Lemma \ref{AprioriEstimatePressure} to obtain the existence of $p_0 \in L^2(\Sigma \times Y_f)$ such that up to subsequence
 \begin{align}
    \chi_{\Omega^\eps} p^\eps \rats \chi_{Y_f} p_0.
 \end{align}
 Let us check that $p_0 $ is independent of $y$. We test the equation $\eqref{MicroModelNavierSlipVarEqu}$ with $\phi^\eps(x):= \phi(\x) \psi\left(\dfrac{x}{\epsilon}\right)$ for $\phi \in C_0^{\infty}(\Sigma)$ and $\psi \in C_0^{\infty}(Y_f)^n$ to obtain for $\epsilon \to 0$ almost everywhere in $\Sigma$
\begin{align*}
    \int_{\Sigma} \int_{Y_f } p_0(x,y) \nabla_y \cdot \psi(y) dy = 0.
\end{align*}
This implies $p_0(\x,y) = p_0(\x)$.  Let us show the convergence of the traces. For $\gamma \geq -1 $ (and $\alpha \geq 0$) we have from Lemma \ref{AprioriEstimatesVelocity}
\begin{align*}
    \Vert u^\eps \Vert_{L^2(\Gamma_D^\eps)} \le C\eps^2.
\end{align*}
With Lemma \ref{LemmaBasicTSCompactness} we obtain the two-scale convergence of $\eps^{-2} u^\eps\vert_{\Gamma_D^\eps} $ to $u_0\vert_{\Gamma}$. Now, for $\alpha \neq 0 $ and $\gamma < -1$ we get with Lemma \ref{AprioriEstimatesVelocity}
\begin{align}
    \eps^{-2} \Vert u^\eps \Vert_{L^2(\Gamma_D^\eps)} \le C\eps^{-\frac{1 + \gamma}{2}} \overset{\eps \to 0}{\longrightarrow} 0,
\end{align}
since $0  < -\frac{1 +\gamma}{2} $.
\end{proof}

\section{Two pressure characterization}
\label{SectionTwoPressureDecomposition}

In this section we show that every functional  $v^{\ast} \in L^2(\Sigma,\spaceHn)'$ vanishing on a suitable subspace, including divergence free functions with respect to the micro-variable and vanishing divergence for the mean with respect to the macro-variable, can be decomposed into the sum of the gradients of two pressures $v^{\ast} = \nabla_{\x} p_0  + \nabla_y p_1$. This result is crucial to obtain the pressures in the two pressure Stokes system, see for example $\eqref{TwoScaleMacroModelAlpha0}$. Similar results for perforated domains (not thin) and a no-slip boundary condition on the interior oscillating surface can be found in \cite{allaire1997one} and \cite[Section 14]{chechkin2007homogenization}. Here, we give a detailed proof for thin perforated domains and the zero normal flux boundary condition.

We define the spaces (see $\eqref{DefinitionSpaceHn}$ for the definition of the space $\spaceHn$)
\begin{align*}
    \spaceHndiv&:= \left\{ u \in H_{\#}^1(Y_f)^n \, : \, \nabla_y \cdot u = 0 \text{ in } Y_f , \, u\cdot \hat{n} = 0 \text{ on } \Gamma_D\right\} \subset \spaceHn,
    \\
    \spaceV &    := \left\{ \phi \in L^2(\Sigma,\spaceHndiv) \, : \, \nabla_{\x} \cdot \int_{Y_f} \phi(\cdot_{\x},y) dy = 0 \text{ in } \Sigma \right\} \subset L^2(\Sigma, \spaceHn).
\end{align*}
First of all, let us show a Poincar\'e inequality on $\spaceHn$, which especially implies that $\Vert \cdot \Vert_{\spaceHn}$ defines a norm on $\spaceHn$ equivalent to the usual $H^1$-norm.
\begin{lem}\label{PoincareInequalityHn}
For all $\phi \in H^1_{\#}(Y_f)^n$ it holds that
\begin{align*}
    \Vert \phi \Vert_{L^2(Y_f)}^2 \le C \left( \Vert \nabla_y \phi \Vert_{L^2(Y_f)}^2 + \int_{\Gamma} \vert \phi \cdot \hat{n} \vert^2 d\sigma \right).
\end{align*}
Especially, we have for all $\phi \in \spaceHn$
\begin{align*}
    \Vert \phi \Vert_{L^2(Y_f)} \le C \Vert \nabla_y \phi \Vert_{L^2(Y_f)}.
\end{align*}
\end{lem}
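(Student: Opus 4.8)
The plan is to prove the first (more general) inequality by a standard compactness–contradiction argument, and then deduce the second as an immediate consequence once we know that $\Vert\cdot\Vert_{\spaceHn}$ separates points of $\spaceHn$.

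\textbf{Compactness–contradiction for the first inequality.}
Suppose the inequality fails. Then there is a sequence $\phi_k \in H^1_{\#}(Y_f)^n$ with $\Vert \phi_k \Vert_{L^2(Y_f)} = 1$ but $\Vert \nabla_y \phi_k \Vert_{L^2(Y_f)}^2 + \int_\Gamma \vert \phi_k \cdot \hat n\vert^2\, d\sigma \to 0$. In particular $(\phi_k)$ is bounded in $H^1(Y_f)^n$, so after passing to a subsequence $\phi_k \rightharpoonup \phi$ weakly in $H^1(Y_f)^n$ and, by Rellich compactness on the bounded Lipschitz domain $Y_f$, $\phi_k \to \phi$ strongly in $L^2(Y_f)^n$; hence $\Vert\phi\Vert_{L^2(Y_f)} = 1$. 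Since $\nabla_y \phi_k \to 0$ in $L^2$, weak lower semicontinuity forces $\nabla_y \phi = 0$, so $\phi$ is a constant vector $\alpha \in \R^n$ (using that $Y_f$ is connected; periodicity is automatically preserved). By continuity of the trace operator $H^1(Y_f)^n \to L^2(\Gamma)^n$ and the convergence $\int_\Gamma \vert\phi_k\cdot\hat n\vert^2\,d\sigma \to 0$, we get $\alpha\cdot\hat n = 0$ on $\Gamma = \Gamma_D$. Assumption \ref{AssumptionGeometry} (equivalently $\Span\{\hat n(y): y\in\Gamma_D\} = \R^n$) then yields $\alpha = 0$, contradicting $\Vert\phi\Vert_{L^2(Y_f)} = 1$.

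\textbf{Deduction of the second inequality.}
For $\phi\in\spaceHn$ we have $\phi\cdot\hat n = 0$ on $\Gamma$ by definition, so the boundary term in the first inequality vanishes and $\Vert\phi\Vert_{L^2(Y_f)}^2 \le C\Vert\nabla_y\phi\Vert_{L^2(Y_f)}^2$, which is exactly the claim. Combined with the trivial bound $\Vert\nabla_y\phi\Vert_{L^2(Y_f)} \le \Vert\phi\Vert_{H^1(Y_f)}$, this shows $\Vert\cdot\Vert_{\spaceHn} = \Vert\nabla_y\cdot\Vert_{L^2(Y_f)}$ is equivalent to the full $H^1$-norm on $\spaceHn$, and in particular it is genuinely a norm (it vanishes only on constants, and the only constant in $\spaceHn$ is $0$ by \ref{AssumptionGeometry}); this justifies the earlier remark that $\Vert\cdot\Vert_{\spaceHn}$ is a norm.

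\textbf{Main obstacle.}
The only delicate point is the passage from $\nabla_y\phi = 0$ to $\phi$ constant, and then using the boundary condition to kill that constant: this is precisely where the geometric assumption \ref{AssumptionGeometry} on $\Gamma_D$ enters, and it is exactly the same mechanism already used in the proof of Lemma \ref{LemmaLocalKornInequality}. Everything else — boundedness, Rellich compactness, weak lower semicontinuity of the gradient norm, continuity of the trace — is routine for the bounded Lipschitz domain $Y_f$. One should be slightly careful that the periodicity constraint defining $H^1_{\#}(Y_f)$ is a closed condition preserved under weak $H^1$-convergence, so the limit $\phi$ (and the constant $\alpha$) indeed lies in the right space, but this causes no real difficulty.
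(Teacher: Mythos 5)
Your proof is correct and coincides with the paper's own argument: both proceed by contradiction with a normalized sequence, use Rellich compactness and strong convergence of traces to obtain a constant limit $\alpha$ with $\alpha\cdot\hat n=0$ on $\Gamma_D$, and then invoke assumption \ref{AssumptionGeometry} to conclude $\alpha=0$. The deduction of the second inequality and the norm equivalence on $\spaceHn$ is likewise the intended one.
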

\begin{proof}
 The proof is quite standard. However, for the sake of completeness and to illustrate where some geometrical assumptions on $Y_f$ come into play, we give some details. We argue by contradiction. Assume that there exists a sequence $(\phi_k) \subset H^1_{\#}(Y_f)^n$ such that
 \begin{align}\label{ContradictionAssumptionPoincare}
     1 = \Vert \phi_k \Vert_{L^2(Y_f)}^2 \geq k \left(\Vert \nabla_y \phi_k \Vert_{L^2(Y_f)}^2 + \int_{\Gamma} \vert \phi_k \cdot \hat{n} \vert^2 d\sigma  \right) .
 \end{align}
 Then we obtain for a $\phi \in H^1_{\#}(Y_f)^n$  (up to a subsequence) 
 \begin{align*}
     \phi_k &\rightharpoonup \phi &\mbox{ in }& H^1(Y_f)^n,
     \\
     \nabla_y \phi_k &\rightarrow 0 &\mbox{ in }& L^2(Y_f)^{n\times n},
     \\
     \phi_k &\rightarrow \phi &\mbox{ in }& L^2(Y_f)^n,
     \\
     \phi_k\vert_{\Gamma_D} &\rightarrow \phi\vert_{\Gamma_D} &\mbox{ in }& L^2(\Gamma_D)^n.
 \end{align*}
 Especially, we have $\phi = const$. Using again $\eqref{ContradictionAssumptionPoincare}$ and the strong convergence of the trace of $\phi_k$, we obtain
 \begin{align*}
     \int_{\Gamma_D} \vert \phi \cdot \hat{n} \vert^2 d\sigma = 0
 \end{align*}
 and therefore $\phi \cdot \hat{n} = 0$ on $\Gamma_D$. Due to assumption \ref{AssumptionGeometry} we have $\phi = 0$, which contradicts  $1 = \Vert \phi \Vert_{L^2(Y_f)}$.
\end{proof}

\begin{rem}\ 
\begin{enumerate}[label = (\roman*)]
\item A similar Poincar\'e inequality is used in the proof of \cite[Lemma 2.5]{allaire1991homogenization} without the additional assumption \ref{AssumptionGeometry} on the domain $Y_f$. However, in our situation this assumption is necessary, since otherwise we could have cylindrical inclusions in the layer. Nevertheless, the assumption \ref{AssumptionGeometry} is always fulfilled for strict inclusions $Y_s$.
\item The periodicity of the functions in Lemma \ref{PoincareInequalityHn} is not used.
\end{enumerate}
\end{rem}

We have the following characterization for the orthogonal complement of $\spaceV$. 

\begin{lem}\label{CharacterizationComplementW}
It holds that
\begin{align}\label{DecompositionV}
    \spaceV^\perp = \left\{\nabla_{\x} p_0 + \nabla_y p_1 \in L^2(\Sigma,\spaceHn)' \, : \,  p_0 \in H_0^1(\Sigma), \, p_1 \in L^2(\Sigma \times Y_f)/\R \right\}. 
\end{align}
Further, the decomposition on the right-hand side is unique.
\end{lem}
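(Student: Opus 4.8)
The plan is to prove the two inclusions making up the equality in \eqref{DecompositionV} and then the uniqueness of the decomposition. The inclusion ``$\supseteq$'' is the easy direction: given $p_0 \in H_0^1(\Sigma)$ and $p_1 \in L^2(\Sigma\times Y_f)/\R$, I would check that the functional $\phi \mapsto \int_\Sigma\int_{Y_f}(\nabla_{\x}p_0 + \nabla_y p_1)\cdot\phi\,dy\,d\x$ is well-defined and bounded on $L^2(\Sigma,\spaceHn)$ (using the Poincar\'e inequality of Lemma~\ref{PoincareInequalityHn} to control $\|\phi\|_{L^2}$ by $\|\phi\|_{\spaceHn}$), and that it vanishes on $\spaceV$. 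For the $\nabla_y p_1$ part one integrates by parts in $y$: the boundary term on $\Gamma_D$ vanishes because $\phi\cdot\hat n = 0$ there, the boundary term on $S_N^{\pm}$ vanishes by periodicity (or is absent when $S_N^{\pm}=\emptyset$), and the interior term vanishes because $\nabla_y\cdot\phi = 0$ for $\phi\in\spaceV$. For the $\nabla_{\x}p_0$ part one integrates by parts in $\x$, using $p_0\in H_0^1(\Sigma)$ to kill the boundary term on $\partial\Sigma$ and $\nabla_{\x}\cdot\int_{Y_f}\phi\,dy = 0$ to kill the interior term. Hence the functional lies in $\spaceV^\perp$.

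The inclusion ``$\subseteq$'' is the substantive one and is proved via a two-step de Rham / closed-range argument. First, fix a.e.\ $\x\in\Sigma$ and view $v^*\in\spaceV^\perp$ as acting in the $y$-variable: the restriction of $v^*$ to test functions of the form $\phi(\x,y)$ with $\phi(\x,\cdot)\in\spaceHndiv$ — i.e.\ functions that are already solenoidal and tangential, so their $Y_f$-average need not be $\x$-divergence free — is treated by a de Rham-type lemma on $Y_f$ adapted to the normal-trace condition: a functional on $\spaceHn$ vanishing on $\spaceHndiv$ is $\nabla_y p_1$ for some $p_1\in L^2(Y_f)/\R$. This is the appropriately modified version of \cite[Lemma~1.5]{allaire1997one} or \cite[Lemma~14.3]{chechkin2007homogenization}, and I would prove it by the standard scheme: $\spaceHndiv$ is the kernel of $\div_y:\spaceHn\to L^2(Y_f)$ (or $L^2(Y_f)/\R$, depending on whether $S_N^{\pm}=\emptyset$), this divergence map has closed range because $\div_y$ admits a bounded right inverse on $\spaceHn$ (a Bogovski\u{\i}-type operator compatible with the zero normal trace — or one can split into the Bogovski\u{\i} operator on $Y_f$ plus a correction supported near $\Gamma_D$), and then the closed range theorem gives $v^*|_{L^2(\Sigma,\spaceHndiv)^{\perp,\,y\text{-wise}}} = \div_y^* p_1 = -\nabla_y p_1$; measurable dependence of $p_1$ on $\x$ follows from measurable selection. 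After subtracting $\nabla_y p_1$, the remaining functional $v^* - \nabla_y p_1$ vanishes on all of $L^2(\Sigma,\spaceHndiv)$, so it depends on $\phi$ only through the macroscopic average $\bar\phi(\x):=\int_{Y_f}\phi(\x,y)\,dy$, which ranges over all of $H_0^1(\Sigma)^n$ (after quotienting by the appropriate space) with $\nabla_{\x}\cdot\bar\phi = 0$ on $\spaceV$; a second application of de Rham's lemma in the $\x$-variable on $\Sigma$ yields $v^* - \nabla_y p_1 = \nabla_{\x}p_0$ for some $p_0\in L^2(\Sigma)/\R$, and then the constraint that $v^*$ be defined on functions with $p_0 = p_b$-type freedom forces $p_0\in H_0^1(\Sigma)$ (heuristically: the $\partial\Sigma$ boundary term in the integration by parts must vanish because test functions need not vanish on $\partial\Sigma$, which pins $p_0$ down as an $H^1$ function with zero trace).

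For uniqueness: if $\nabla_{\x}p_0 + \nabla_y p_1 = 0$ as a functional on $L^2(\Sigma,\spaceHn)$, then testing with $\phi$ supported in $\x$ and with $\phi(\x,\cdot)$ of mean zero over $Y_f$ isolates $\nabla_y p_1 = 0$ for a.e.\ $\x$, whence $p_1$ is constant in $y$, i.e.\ $p_1 = 0$ in $L^2(\Sigma\times Y_f)/\R$; then $\nabla_{\x}p_0 = 0$ with $p_0\in H_0^1(\Sigma)$ forces $p_0 = 0$.

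\textbf{Main obstacle.} The hard part will be the de Rham lemma on $Y_f$ with the normal-trace constraint, i.e.\ establishing closed range of $\div_y:\spaceHn\to L^2(Y_f)$ (or onto $L^2(Y_f)/\R$ when $S_N^{\pm}=\emptyset$) together with the correct identification of the range — this is where assumption~\ref{AssumptionGeometry}, the connectedness of $Y_f$, and the distinction $S_N^{\pm}=\emptyset$ versus $S_N^{\pm}\neq\emptyset$ all enter, and where a Bogovski\u{\i}-type right inverse respecting $u\cdot\hat n=0$ on $\Gamma_D$ must be built (or the standard Bogovski\u{\i} operator on $Y_f$ must be shown to already produce, after a harmless correction, an element of $\spaceHn$). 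The second, $\x$-variable de Rham step is comparatively routine once the structure $v^* - \nabla_y p_1$ vanishing on $\spaceV$ has been extracted, but care is needed to argue that $\bar\phi$ sweeps out the full constrained space and to justify the regularity $p_0\in H_0^1(\Sigma)$ rather than merely $p_0\in L^2(\Sigma)/\R$.
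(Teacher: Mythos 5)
Your ``$\supseteq$'' direction and the general closed-range philosophy are fine, but the ``$\subseteq$'' direction has a structural error in the order of extraction. You apply a $y$-wise de~Rham lemma of the form ``a functional on $\spaceHn$ vanishing on $\spaceHndiv$ equals $\nabla_y p_1$'' to (slices of) $v^{\ast}$. However, $v^{\ast}\in\spaceV^{\perp}$ is only known to vanish on $\spaceV=V_1\cap V_2$ (with $V_1=L^2(\Sigma,\spaceHndiv)$ and $V_2$ the subspace whose $Y_f$-average is $\x$-divergence free), \emph{not} on all of $V_1$ --- and you explicitly admit test functions whose average is not $\x$-divergence free, i.e.\ functions on which $v^{\ast}$ need not vanish. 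So the hypothesis of your lemma is not satisfied; and if it were, the lemma would already give $v^{\ast}=\nabla_y p_1$ on all of $L^2(\Sigma,\spaceHn)$, leaving no room for $\nabla_{\x}p_0$. The subsequent claim that $v^{\ast}-\nabla_y p_1$ ``vanishes on $L^2(\Sigma,\spaceHndiv)$ and hence depends on $\phi$ only through $\bar\phi$'' is a non sequitur for the same reason: a functional vanishing on $V_1$ lies in $V_1^{\perp}$, which consists of $y$-gradients, not of functionals of the average. The extraction can only be salvaged by first testing with fields of zero $Y_f$-average (which automatically lie in $\spaceV$); the $y$-wise de~Rham step then produces $\nabla_y p_1$ \emph{plus} a residual functional of the form $\lambda(\x)\cdot\int_{Y_f}\phi\,dy$, and identifying $\lambda$ with $\nabla_{\x}p_0$ is exactly the hard part you have skipped.

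That hard part is the characterization $V_2^{\perp}=\{\nabla_{\x}p_0 : p_0\in H_0^1(\Sigma)\}$, which the paper obtains by constructing a bounded right inverse $F$ of the averaging map $\phi\mapsto\int_{Y_f}\phi\,dy$ with values in $L^2(\Sigma,\spaceHn)$, built from the auxiliary cell problems for $h_i$ and the positive definite matrix $B_{ij}=\int_{Y_f}\nabla_y h_i:\nabla_y h_j\,dy$; positivity rests on assumption \ref{AssumptionGeometry} and the Poincar\'e inequality of Lemma~\ref{PoincareInequalityHn}. Only with $F$ in hand can one identify $G\circ F$ with some $g\in L^2(\Sigma)^{n-1}$ vanishing on $L^2_{\div}(\Sigma)$ and invoke $L^2(\Sigma)^{n-1}=L^2_{\div}(\Sigma)\oplus\nabla_{\x}H_0^1(\Sigma)$ to get $p_0\in H_0^1(\Sigma)$; the same construction (via the Stokes cell solutions $w_i$ and the permeability matrix) is needed in the uniqueness argument to produce solenoidal test functions with prescribed averages. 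By contrast, the obstacle you single out as the main one --- a Bogovski\u{\i} operator compatible with the normal-trace condition --- is not an obstacle at all: since $H^1_{\#,0}(Y_f)^n\subset\spaceHn$, the ordinary Bogovski\u{\i} right inverse of $\div_y$ with values in $H^1_0(Y_f)^n$ already suffices, and the closed range theorem then gives $V_1^{\perp}$ directly.
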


\begin{rem}
 We emphasize that the annihilator of $\spaceV$ is taken with respect to the space $L^2(\Sigma,\spaceHn)$ and for $p_0 \in H_0^1(\Sigma)$ and $p_1 \in L^2(\Sigma \times Y_f)/\R$ we have the following embeddings for the gradients into the space $L^2(\Sigma,\spaceHn)'$:
\begin{align*}
    \langle \nabla_{\x} p_0, \phi \rangle_{L^2(\Sigma,\spaceHn)', L^2(\Sigma,\spaceHn)}&= (\nabla_{\x} p_0, \phi)_{L^2(\Sigma \times Y_f)},
    \\
    \langle \nabla_y p_1 , \phi \rangle_{L^2(\Sigma,\spaceHn)', L^2(\Sigma,\spaceHn)}&= - (p_1, \nabla_y \cdot \phi )_{L^2(\Sigma \times Y_f)}.
\end{align*}
The right-hand sides obviously define functionals on $L^2(\Sigma,\spaceHn)$ (see also the Poincar\'e-inequaltiy in Lemma \ref{PoincareInequalityHn}). The pressure $p_1$ on the right-hand side in the decomposition $\eqref{DecompositionV}$ is unique only up to a constant.
\end{rem}
\begin{proof}[Proof of Lemma \ref{CharacterizationComplementW}]
 We describe the space $\spaceV$ as the intersection of $V_1$ and $V_2$ defined via
 \begin{align*}
     V_1 &:= L^2(\Sigma,\spaceHndiv) = \left\{ \phi \in L^2(\Sigma,\spaceHn) \, : \, \nabla_y \cdot \phi = 0\right\}
     \\
     V_2 &:= \left\{ \phi \in L^2(\Sigma, \spaceHn) \, : \, \nabla_{\x} \cdot \int_{Y_f } \phi dy = 0 \right\}.
 \end{align*}
 Obviously, we have $\spaceV = V_1 \cap  V_2$. Since $V^{\perp} = (V_1 \cap V_2)^{\perp} = V_1^{\perp} \oplus V_2^{\perp}$, it is enough to characterize the spaces $V_1^{\perp}$ and $V_2^{\perp}$. First, we show
 \begin{align*}
     V_1^{\perp} =  \left\{ \nabla_y \phi_1 \, : \, \phi_1 \in L^2(\Sigma \times Y_f)/\R\right\}.
 \end{align*}
 This follows from the closed range theorem. In fact, since 
 \begin{align*}
      H_{\#,0}^1(Y_f):= \left\{ u \in H_{\#}^1(Y_f)\, : \, u=0 \text{ on } \Gamma_D \right\} \subset \spaceHn,
 \end{align*}
we have that the operator 
 \begin{align*}
     \div_y : \spaceHn \rightarrow L^2_0(Y_f) := \left\{ f \in L^2(Y_f) \, : \, \int_{Y_f} f dy = 0 \right\}
 \end{align*}
 is surjective and therefore has closed range. Hence, its adjoint
 \begin{align}
     -\nabla_y : L^2(Y_f) \rightarrow \spaceHn', \qquad \langle -\nabla_y p , \phi \rangle_{\spaceHn',\spaceHn} = (p , \nabla_y \cdot \phi )_{L^2(Y_f)}
 \end{align}
 has closed range with 
 \begin{align*}
     R(-\nabla_y) = N(\div_y)^{\perp} = \left\{ F \in \spaceHn' \, : \, \langle F,\phi \rangle_{\spaceHn',\spaceHn} = 0 \, \mbox{ for all } \phi \in \spaceHndiv \right\}.
 \end{align*}
 Since $\x \in \Sigma$ only acts as a parameter, we obtain
 \begin{align}
     V_1^{\perp}  = L^2(\Sigma, R(-\nabla_y)).
 \end{align}
 Further, we have $N(-\nabla_y) = R(\div_y)^{\perp} = L_0^2(Y_f)^{\perp} = \R$,
 which gives the desired result for $V_1^{\perp}$.
 
 Next, we show that
 \begin{align*}
     V_2^{\perp} = \left\{ \nabla_{\x} p_0 \, : \, p_0 \in H_0^1(\Sigma)\right\}.
 \end{align*}
 First of all, for $i=1,\ldots, n-1$, let $h_i \in \spaceHn$ be the unique solution of the problem (upper index $i$ stands for the component of a vector)
 \begin{align}\label{AuxiliaryCellProblemHiWeak}
     (\nabla_y h_i , \nabla_y \phi)_{L^2(Y_f)} = \int_{Y_f} \phi^i  dy \qquad\mbox{for all } \phi \in \spaceHn.
 \end{align}
 In other words, $h_i$ is the unique weak solution of the problem
 \begin{align}
 \begin{aligned}\label{AuxiliaryCellProblemHi}
     -\Delta_y h_i &= e_i &\mbox{ in }& Y_f,
     \\
     h_i \cdot \hat{n} &= 0 &\mbox{ on }& \Gamma_D,
     \\
     [\nabla_y h_i \hat{n}]_{\tau} &= 0 &\mbox{ on }& \Gamma_D,
     \\
     h_i &\mbox{ is } (0,1)^{n-1}\mbox{-periodic}.
 \end{aligned}
 \end{align}
Due to the Poincar\'e inequality from Lemma \ref{PoincareInequalityHn}, this problem has a unique weak solution. Now, we define the matrix $B \in \R^{(n-1) \times (n-1)}$ via ($i,j=1,\ldots,n-1$)
 \begin{align*}
     B_{ij} := \int_{Y_f} \nabla_y h_i : \nabla_y h_j dy = \int_{Y_f } h_i^j dy = \int_{Y_f} h_j^i dy.
 \end{align*}
 Obviously, $B$ is symmetric. Let us show that it is also positive. The proof is quite similar to the proof of the positivity of the permeability tensor for the Darcy problem, see \cite[Chapter 7, Proposition 2.2]{SanchezPalencia1980}. However, we give some details. Let $\xi = \R^{n-1} \times \{0\}$ and $h:= \sum_{i=1}^n \xi_i h_i \in \spaceHn$. Then we have (with $ \bar{\xi} =(\xi_1,\ldots,\xi_{n-1})$)
 \begin{align*}
     B\bar{\xi} \cdot \bar{\xi} = \Vert \nabla_y h \Vert_{L^2(Y_f)}^2 = \Vert h \Vert_{\spaceHn}^2 \geq 0.
 \end{align*}
 In the case of equality we have $h=0$. Then we have for all $\phi \in \spaceHn$
 \begin{align}
     0 = (\nabla_y h , \nabla_y \phi )_{L^2(Y_f)} = \xi \cdot \int_{Y_f} \phi dy.
 \end{align}
 Choosing $\phi$ in such a way that $\int_{Y_f} \phi dy = \xi$, we obtain $\xi = 0$. This gives the positivity of $B$. 
 
 Now, we define the operator (see \cite[Lemma 2.10]{Allaire_TwoScaleKonvergenz} for a similar construction)
 \begin{align*}
     F: L^2(\Sigma)^{n-1} \rightarrow L^2(\Sigma,\spaceHn), \qquad F(\theta)(\x,y) = \sum_{i=1}^{n-1} \left(B^{-1}\theta(\x)\right)^i h_i.
 \end{align*}
 We use the short notation $\phi_{\theta} := F(\theta)$. The operator $F$ is linear and bounded, and it holds for $i=1,\ldots,n-1$ that
 \begin{align}\label{MeanValueF}
     \int_{Y_f} \phi_{\theta}^i dy = \theta^i.
 \end{align}
 We emphasize that the $n$-th component in the construction of $\phi_{\theta}$ has no influence. In fact, it is easy to check that a functional in $V_2^{\perp}$ is not depending on the $n$-th component of the test-function.
  Using the positivity of $B^{-1}$, we obtain
 \begin{align*}
     \Vert \phi_{\theta} \Vert_{L^2(\Sigma,\spaceHn)}^2 &=\sum_{i,j=1}^{n-1} \int_{\Sigma} \left(B^{-1} \theta(\x)\right)^i \left(B^{-1}\theta(\x)\right)^j \underbrace{(\nabla_y h_i ,\nabla_y h_j)_{L^2(Y_f)}}_{=B_{ij}} d\x 
     \\
     &= \sum_{i,j,k,l=1}^{n-1} \int_{\Sigma} B_{ik}^{-1} B_{jl}^{-1} \theta^k(\x) \theta^l(\x) B_{ij} d\x 
     \\
    &= \int_{\Sigma} B^{-1}\theta(\x) \cdot \theta(\x) d\x \geq c_0 \Vert \theta\Vert_{L^2(\Sigma)}^2.
 \end{align*}
 Hence, the operator $F$ has closed range $R(F)$. Next, we prove that the orthogonal complement of $R(F)$ in $L^2(\Sigma,\spaceHn)$ fulfills
 \begin{align}\label{CharacterizationRF}
     R(F)^{\perp} = \left\{ \psi \in L^2(\Sigma, \spaceHn) \, : \, \int_{Y_f} \psi^i dy = 0 \mbox{ for } i =1,\ldots,n-1 \right\} \subset V_2.
 \end{align}
 For all $\theta \in L^2(\Sigma)^{n-1}$ and all $\psi \in L^2(\Sigma,\spaceHn)$ it holds that (with a similar calcluation as above and using the weak equation for $h_i$)
 \begin{align*}
     (\phi_{\theta} , \psi)_{L^2(\Sigma,\spaceHn)} &= \sum_{i=1}^{n-1} \int_{\Sigma} \left(B^{-1}\theta(\x)\right)^i \int_{Y_f} \psi^i dy  d\x 
     \\
     &= \left(B^{-1} \theta , \int_{Y_f} (\psi^1,\ldots,\psi^{n-1})^T dy \right)_{L^2(\Sigma)}.
 \end{align*}
 This immediately implies $\eqref{CharacterizationRF}$. 
 
 Now, let $G \in V_2^{\perp} \subset L^2(\Sigma,\spaceHn)'$. We define the operator $\widetilde{G}: L^2(\Sigma)^{n-1} \rightarrow \R$ by the composition $\widetilde{G} := G \circ F$, more precisely we have 
 \begin{align*}
     \widetilde{G}(\theta) = \langle G, \phi_{\theta} \rangle_{L^2(\Sigma,\spaceHn)',L^2(\Sigma,\spaceHn)}.
 \end{align*}
 $\widetilde{G}$ is linear and bounded, and we can identify it with an element $g \in L^2(\Sigma)^{n-1}$. It holds the following well known (and easy to check) decomposition
 \begin{align}\label{DecompositionL2}
     L^2(\Sigma)^{n-1} = L_{\div}^2(\Sigma) \oplus \nabla_{\x} H_0^1(\Sigma),
 \end{align}
 with 
 \begin{align*}
     L_{\div}^2(\Sigma) := \left\{ \phi \in L^2(\Sigma)^{n-1} \, : \, \nabla_{\x} \cdot \phi = 0 \right\}.
 \end{align*}
 Since the mean of the first $n-1$ components of $\phi_{\theta} $ is equal to $\theta$, see $\eqref{MeanValueF}$, we have for all $\theta \in L_{\div}^2(\Sigma)$ that $\phi_{\theta} \in V_2$. This implies for all $\theta \in L_{\div}^2(\Sigma)$
 \begin{align*}
     (g, \theta )_{L^2(\Sigma)} = \langle G , \phi_{\theta}\rangle_{L^2(\Sigma,\spaceHn)',L^2(\Sigma,\spaceHn)} = 0.
 \end{align*}
 Now, the decomposition $\eqref{DecompositionL2}$ above implies the existence of $p \in H_0^1(\Sigma)$ with $g = \nabla_{\x} p$. Especially we obtain using again the mean value property $\eqref{MeanValueF}$ for every $\theta \in L^2(\Sigma)^{n-1}$
 \begin{align}\label{IdentityGwithGradpOnRF}
      \langle G , \phi_{\theta}\rangle_{L^2(\Sigma,\spaceHn)',L^2(\Sigma,\spaceHn)} = (g,\theta)_{L^2(\Sigma)} = (\nabla_{\x} p , \phi_{\theta})_{L^2(\Sigma \times Y_f)}.
 \end{align}
 In other words $G = \nabla_{\x} p$ on $R(F)$. Since $R(F) $ is closed we have $L^2(\Sigma,\spaceHn) = R(F) \oplus R(F)^{\perp}$, and because $R(F)^{\perp} \subset V_2$ the operator $G$ vanishes on $R(F)^{\perp}$.  Now, let $\phi \in L^2(\Sigma,\spaceHn)$ with decomposition $\phi = \phi_{\theta} + \phi^{\perp}$ for $\theta \in L^2(\Sigma)^{n-1}$ and $\phi^{\perp} \in R(F)^{\perp}$. Especially we have, see $\eqref{CharacterizationRF}$, that $\int_{Y_f} (\phi^{\perp} )^idy = 0$  for $i=1,\ldots,n-1$, and therefore
 \begin{align*}
     (\nabla_{\x} p , \phi^{\perp} )_{L^2(\Sigma \times Y_f)} =0.
 \end{align*}
 Altogether, we obtain with $\eqref{IdentityGwithGradpOnRF}$
 \begin{align*}
     \langle G, \phi \rangle_{L^2(\Sigma,\spaceHn)', L^2(\Sigma,\spaceHn)} &= \langle G, \phi_{\theta}\rangle_{L^2(\Sigma,\spaceHn)', L^2(\Sigma,\spaceHn)} 
     \\
     &= (\nabla_{\x} p, \phi_{\theta})_{L^2(\Sigma\times Y_f)} = (\nabla_{\x} p , \phi)_{L^2(\Sigma \times Y_f)},
 \end{align*}
 which gives the representation of $\spaceV^{\perp}$.
 \\
 
It remains to establish the uniqueness of the decomposition. Let $v^{\ast} \in \spaceV^{\perp}$ and $v^{\ast} = \nabla_{\x} p_i + \nabla_y q_i$ with $p_i \in H_0^1(\Sigma)$ and $q_i \in L^2(\Sigma \times Y_f)/\R$ for $i=1,2$. For every $\theta \in L^2(\Sigma)^{n-1}$ there exists $\phi \in L^2(\Sigma,\spaceHndiv)$ such that $\int_{Y_f} \phi^i dy = \theta^i$. We only sketch the proof, since the construction follows by similar arguments as for the operator $F$ above, where we additionally have to take into account that $\phi$ has to be solenoidal with respect to $y$. We define for almost every $(\x,y) \in \Sigma \times Y_f$
\begin{align*}
    \phi(\x,y):= \sum_{i=1}^{n-1} \left(\tilde{K}^{-1} \theta(\x)\right)^i w_i,
\end{align*}
where $w_i$ is the solution of the cell problem $\eqref{CellProblemWi}$  below and $\tilde{K}_{lk}:= K_{lk}$ for $l,k=1,\ldots,n-1$ with $K$ the permeability tensor defined in $\eqref{DefinitionEffectiveTensors}$. It is easy to check that this function has the desired properties. Hence, we obtain for every $\theta \in L^2(\Sigma)^{n-1}$ with $\nabla_{\x} \cdot \theta \in L^2(\Sigma)$ (using $ \phi \in V_1$)
\begin{align*}
    0 &= \langle \nabla_{\x} (p_1 - p_2) + \nabla_y (q_1 - q_2), \phi \rangle_{L^2(\Sigma,\spaceHn)',L^2(\Sigma,\spaceHn)}
    \\
     &= -(p_1 - p_2, \nabla_{\x} \cdot \phi )_{L^2(\Sigma \times Y_f)} = -(p_1 - p_2, \nabla_{\x} \cdot \theta )_{L^2(\Sigma)}.
\end{align*}
This implies $p_1 = p_2 =: p$. Hence, we obtain $v^{\ast} - \nabla_{\x} p = \nabla_y q_i \in V_1^{\perp}$ for $i=1,2$. It is easy to check that this representation is unique and therefore we have $q_1 = q_2$, what finishes the proof. 
 \end{proof}

\begin{rem}\label{RemarkOrthogonalComplement}\
\begin{enumerate}[label = (\roman*)]
    \item Lemma \ref{CharacterizationComplementW} can be easily modified for the problem with no-slip condition on $\Gamma_N^{\epsilon}$, i.e., for $u^{\epsilon} = 0$ on $\Gamma_N^{\epsilon}$. In this case, we have to replace $\spaceV$ by 
    \begin{align*}
        \widetilde{\spaceV} = \left\{ \phi \in \spaceV \, : \, \int_{Y_f} \phi dy \cdot \hat{n} = 0 \mbox{ on } \partial \Sigma\right\},
    \end{align*}
    and the space $V_2$ with 
    \begin{align*}
        \tilde{V}_2 = \left\{ \phi \in V_2 \, : \, \int_{Y_f} \phi dy \cdot \hat{n} = 0 \mbox{ on } \partial \Sigma\right\}.
    \end{align*}
    In this case, we obtain the same characterization for $\widetilde{\spaceV}^{\perp}$ as for $\spaceV^{\perp}$ with $p_0 \in H^1(\Sigma)$, where in the proof we only have to replace the decomposition $\eqref{DecompositionL2}$ with the usual Helmholtz decomposition
    \begin{align*}
        L^2(\Sigma)^{n-1} = L_{\sigma}^2(\Sigma) \oplus \nabla_{\x} H^1(\Sigma),
    \end{align*}
    with $L_{\sigma}^2(\Sigma) = \left\{ \phi \in L^2(\Sigma)^{n-1} \, : \, \nabla_{\x} \cdot \phi = 0 \mbox{ in } \Sigma, \, \phi \cdot \hat{n} = 0 \mbox{ on } \partial \Sigma\right\}$.
    
        \item\label{ItemRemarkCharacterizationSNpmNotEmpty} A similar result is  valid for $S_N^{\pm} \neq \emptyset $. In fact, it holds
    \begin{align*}
         \spaceV^\perp = \left\{\nabla_{\x} p_0 + \nabla_y p_1 \in L^2(\Sigma,\spaceHn)' \, : \,  p_0 \in H_0^1(\Sigma), \, p_1 \in L^2(\Sigma \times Y_f) \right\}.
    \end{align*}
    The only difference is, that the pressure $p_1$ in the decomposition is unique. This follows from the characterization of $V_1^{\perp}$, where now the divergence operator $\div_y$ is surjective from $\spaceHn$ to $L^2(Y_f)$. This leads (by the same arguments as for the case $S_N^{\pm}= \emptyset$)
    \begin{align*}
        V_1^{\perp} =  \left\{\nabla_y p_1 \, : \, p_1 \in L^2(\Sigma \times Y_f)\right\}.
    \end{align*}
    In the proof for the characterization of $V_2^{\perp}$ we only have the add in problem $\eqref{AuxiliaryCellProblemHi}$ the boundary condition 
    \begin{align*}
        -\nabla_y h_i\nu = 0 \qquad \mbox{ on } S^{\pm}_N.
    \end{align*}
    The crucial point is that the weak formulation $\eqref{AuxiliaryCellProblemHiWeak}$ is still the same.
    
    \item Our proof of Lemma \ref{CharacterizationComplementW} is also valid for a no-slip condition on the inner boundary $\Gamma_D^{\epsilon}$. In this case we just have to replace $\spaceHn$ with $H_{\#,0}^1(Y_f)^n$. In the case $S_N^{\pm} \neq \emptyset$ the pressure $p_1$ is unique as in \ref{ItemRemarkCharacterizationSNpmNotEmpty} above.
    
\end{enumerate}
\end{rem}

\section{Derivation of the macroscopic models}\label{Derivation:sec}

In this section we derive the macroscopic problems for different values for $\alpha$,  $\gamma$, and for the cases $S_N^{\pm}$ empty or not. This derivation is based on the two-scale compactness results in Lemma \ref{ConvergenceResults}. It is enough to establish a macroscopic equation for the limit fluid velocity $u_0$ on the space of test-functions $\spaceV$, see Section \ref{SectionTwoPressureDecomposition}. The associated pressures are then obtained by the orthogonality result in Lemma \ref{CharacterizationComplementW}. Since the steps are quite similar for the different cases, we will focus on the case $ \alpha = 0$ and $S_N^{\pm} = \emptyset$. For the other cases we shortly explain the  arising differences.
\\


\subsection{The case $\alpha = 0$ and $S_N^{\pm } = \emptyset$ (Proof of Theorem \ref{MainResultAlphaZeroSNempty})}
\label{SectionCaseAlphaZeroSNZero}

To pass to the limit in the microscopic variational equation  $\eqref{MicroModelNavierSlipVarEqu}$ we choose smooth test-functions. Therefore, we introduce the dense subset $\spaceV^{\infty}$ of  the space $\spaceV$ defined by
\begin{align*}
   \spaceV^{\infty} :=\left\{ \phi \in C^{\infty}(\overline{\Sigma} , \spaceHndiv) \, : \, \nabla_{\x} \cdot \int_{Y_f} \phi(\cdot_{\x},y) dy = 0 \text{ in } \Sigma\right\}.
\end{align*}
As a test-function in the microscopic equation $\eqref{MicroModelNavierSlipVarEqu}$ we choose $\phi^\eps(x):= \eps^{-1} \phi\left(\x,\dfrac{x}{\eps}\right)$ with $\phi \in C^{\infty}(\overline{\Sigma} , \spaceHndiv)  $  and obtain
\begin{align}
\begin{aligned}\label{MicroEquationTestedOscillatingFunctions}
    \frac{2\mu}{\eps} \int_{\Omega^\eps} \eps^{-1}& D(u^\eps) : \left[ \eps D_{\x}(\phi)\left(\x,\dfrac{x}{\eps}\right) + D_y (\phi)\left(\x,\dfrac{x}{\eps}\right)\right]  dx 
    \\
    -& \frac{1}{\eps} \int_{\Omega^{\eps}} (p^\eps - p_b^\eps) \nabla_{\x} \cdot \phi \left(\x,\dfrac{x}{\eps}\right) dx 
    \\
    &=  \frac{1}{\eps}\int_{\Omega^{\eps}} (f^\eps - \nabla p_b^\eps) \cdot \phi\left(\x,\dfrac{x}{\eps}\right) dx + \frac{1}{\eps}\int_{\Gamma_D^\eps } g^\eps \cdot \phi\left(\x,\dfrac{x}{\eps}\right) d\sigma 
\end{aligned}
   \end{align}
Using the convergence results from Lemma \ref{ConvergenceResults} and the assumptions on the data  we obtain for $\eps \to 0$
\begin{align}\label{AuxiliaryEquationMacroModel1}
\begin{aligned}
       2\mu \int_{\Sigma}\int_{Y_f}& D_y(u_0) : D_y(\phi) dy d\x -\int_{\Sigma} \int_{Y_f } (p_0 - p_b) \nabla_{\x} \cdot \phi dy d\x
    \\
    &= \int_{\Sigma } \int_{Y_f } (f^0 - \nabla_{\x} p_b - \nabla_y p_b^1) \cdot \phi dy d\x + \int_{\Sigma} \int_{\Gamma_D } g^0 \cdot \phi d\sigma_y d\x
    \\
    &= \int_{\Sigma } \int_{Y_f } (f^0 - \nabla_{\x} p_b) \cdot \phi dy d\x + \int_{\Sigma} \int_{\Gamma_D } g^0 \cdot \phi d\sigma_y d\x
\end{aligned}
\end{align}
We emphasize that the last equation is only valid for $S^{\pm}_N = \emptyset$.
By density this equation is also valid for all functions in $L^2(\Sigma,\spaceHndiv)$.
For  $\phi \in \spaceV^{\infty}$ the second term on the left-hand side vanishes and we obtain
\begin{align*}
    2\mu \int_{\Sigma}\int_{Y_f}& D_y(u_0) : D_y(\phi) dy d\x 
    \\
    &= \int_{\Sigma } \int_{Y_f } (f^0 - \nabla_{\x} p_b) \cdot \phi dy d\x   + \int_{\Sigma} \int_{\Gamma_D } g^0 \cdot \phi d\sigma_y d\x.
\end{align*}
By density this result holds for all $\phi \in \spaceV$.
Due to Lemma \ref{CharacterizationComplementW}, there exist $P_0 \in H_0^1(\Sigma)$ and $p_1 \in L^2(\Sigma\times Y_f)/\R$ such that
\begin{align*}
        2\mu \int_{\Sigma}\int_{Y_f}& D_y(u_0) : D_y(\phi) dy d\x + \int_{\Sigma} \int_{Y_f } \nabla_{\x} P_0 \cdot \phi dy d\x - \int_{\Sigma } \int_{Y_f } p_1 \nabla_y \cdot \phi dy d\x
    \\
    &= \int_{\Sigma } \int_{Y_f } (f^0 - \nabla_{\x} p_b) \cdot \phi dy d\x  + \int_{\Sigma} \int_{\Gamma_D } g^0 \cdot \phi d\sigma_y d\x.
\end{align*}
for all $\phi \in L^2(\Sigma, \spaceHn)$. From the uniqueness of the decomposition in Lemma \ref{CharacterizationComplementW} we immediately obtain that $P_0 = p_0 - p_b$.
Altogether we showed that the triple $(u_0, p_0, p_1) $ solves the variational equation  $\eqref{VariationalEquationTwoPressureProblem}$ and therefore is a weak solution of the macroscopic model $\eqref{TwoScaleMacroModelAlpha0}$. Uniqueness of the solution follows by standard arguments and we skip the proof.

To finish the proof of Theorem \ref{MainResultAlphaZeroSNempty} we have to derive the Darcy-law for $p_0$. Due to the linearity of the problem $\eqref{TwoScaleMacroModelAlpha0}$, we easily obtain the following representation for $u_0$ and $p_1$
\begin{align}
\begin{aligned}\label{RepresentationVelocityPressure}
    u_0(\x,y) &= \frac{1}{\mu} \sum_{i=1}^{n-1} \left(f^0_i(\x) - \partial_{x_i} p_0(\x)\right) w_i (y) + \frac{1}{\mu}g_{\Sigma}(\x) w_{\Gamma_D}(y),
    \\
    p_1(\x,y) &= \sum_{i=1}^{n} \left(f^0_i(\x)- \partial_{x_i}p_0(\x) \right) \pi_i(y) + g_{\Sigma}(\x) \pi_{\Gamma_D}(y), 
\end{aligned}
\end{align}
where $(w_i,\pi_i) \in \spaceHndiv \times L^2(Y_f)/\R$ for $i=1,\ldots,n$, and $(w_{\Gamma_D},\pi_{\Gamma_D} ) \in \spaceHndiv \times L^2(Y_f)/\R$ are the unique weak solutions of the following cell problems:
\begin{align}
\begin{aligned}\label{CellProblemWi}
-2 \nabla_y \cdot (D_y(w_i)) + \nabla_y \pi_i &= e_i &\mbox{ in }& Y_f,
\\
\nabla_y \cdot w_i &= 0 &\mbox{ in }& Y_f,
\\
w_i \cdot \hat{n} &= 0 &\mbox{ on }& \Gamma_D,
\\
2[D_y(w_i)\hat{n}]_{\tau} &= 0 &\mbox{ on }& \Gamma_D,
\\
w_i &\mbox{ is } (0,1)^{n-1}\mbox{-periodic},
\end{aligned}
\end{align}
and 
\begin{align}
\begin{aligned}\label{CellProblemWGamma}
-2 \nabla_y \cdot (D_y(w_{\Gamma_D})) + \nabla_y \pi_{\Gamma_D} &= 0 &\mbox{ in }& Y_f,
\\
\nabla_y \cdot w_{\Gamma_D} &= 0 &\mbox{ in }& Y_f,
\\
w_{\Gamma_D} \cdot \hat{n} &= 0 &\mbox{ on }& \Gamma_D,
\\
2[D_y(w_{\Gamma_D})\hat{n}]_{\tau} &= g_{\Gamma_D} &\mbox{ on }& \Gamma_D,
\\
w_{\Gamma_D} &\mbox{ is } (0,1)^{n-1}\mbox{-periodic}.
\end{aligned}
\end{align}
Obviously, we have $(w_n,\pi_n) = (0,y_n)$, and therefore it is enough to consider in the representation for $u_0$ in $\eqref{RepresentationVelocityPressure}$ the sum from $1$ to $n-1$. 

Further, we define the tensor $K \in \R^{n\times n}$  and the effective vector $\beta \in \R^n$ by ($i,j=1,\ldots,n$)
\begin{align}
\begin{aligned}\label{DefinitionEffectiveTensors}
    K_{ij} &:= \int_{Y_f} D_y(w_i): D_y (w_j) dy,
    \\
    \beta &:= \int_{Y_f} w_{\Gamma_D} dy.
\end{aligned}
\end{align}
\begin{rem}
Of course, since $w_n = 0$, it holds for $i=1,\ldots,n$ that 
\begin{align*}
    K_{ni} = K_{in} = 0.
\end{align*}
However, to keep the notation a little bit simpler, we consider $K$ as a $(n\times n)$-matrix. Further, a simple calculation shows (using again $S_N^{\pm} = \emptyset$)
\begin{align*}
    \beta^n = 0.
\end{align*}
\end{rem}

Now, we define the Darcy-velocity $\bar{u}_0: \Sigma \rightarrow \R^n$ by
\begin{align*}
    \bar{u}_0(\x) := \int_{Y_f} u_0(\x,y) dy = \frac{1}{\mu} K(f^0(\x) - \nabla_{\x} p_0(\x)) +\frac{1}{\mu} \beta g_{\Sigma}(\x).
\end{align*}
Using $\nabla_{\x} \cdot \bar{u}_0 = 0$, we obtain the Darcy law $\eqref{DarcyLawAlphaZero}.$

\begin{rem}
We have $\bar{u}_0^n = 0$ and $\bar{u}_0 $ is independent of $(f^0)^n$. This force only has an influence on the pressure $p_1.$
\end{rem}

\subsection{The case $\alpha = 0$ and  $S_N^{\pm} \neq \emptyset$ (Proof of Theorem \ref{MainResultAlphaZeroSNnotempty})}
\label{SectionAlphaZeroSNneqempty}
 For  $S_N^{\pm} \neq \emptyset$ we need an additional boundary condition in problem $\eqref{TwoScaleMacroModelAlpha0}$ on $S_N^{\pm}$. It is easy to check that the first equality in $\eqref{AuxiliaryEquationMacroModel1}$ is still valid. However, now the term including $p_b^1$ does not vanish. More precisely, we have for all $\phi \in L^2(\Sigma, \spaceHndiv)$ 
\begin{align*}
    2\mu \int_{\Sigma}&\int_{Y_f} D_y(u_0) : D_y(\phi) dy d\x -\int_{\Sigma} \int_{Y_f } (p_0 - p_b) \nabla_{\x} \cdot \phi dy d\x
    \\
    &= \int_{\Sigma } \int_{Y_f } (f^0 - \nabla_{\x} p_b ) \cdot \phi dy d\x \int_{\Sigma} + \sum_{\pm}\int_{\Sigma}\int_{S_N^{\pm}}p_b^1 \phi \cdot \hat{n} d\sigma_y d\x +   \int_{\Gamma_D } g^0 \cdot \phi d\sigma_y d\x.
\end{align*}
Using similar arguments as above, see also Remark \ref{RemarkOrthogonalComplement}, we obtain $p_1 \in L^2(\Sigma \times Y_f)$ such that the triple $(u_0,p_0,p_1)$ solves problem $\eqref{TwoScaleMacroModelAlpha0}$ with the additional boundary condition
\begin{align*}
    \left[ p_1 I - 2\mu D_y(u_0)\right]\hat{n} = p_b^1 \hat{n} \qquad\mbox{on } S_N^{\pm}.
\end{align*}
Using the separation for $p_b^1$ from assumption \ref{AssumptionPressure}, we obtain
\begin{align*}
        u_0(\x,y) &= \frac{1}{\mu} \sum_{i=1}^{n} \left(f^0_i(\x) - \partial_{x_i} p_0(\x)\right) w_i (y) + \frac{1}{\mu}g_{\Sigma}(\x) w_{\Gamma_D}(y) + \frac{1}{\mu} p_{b,\Sigma}^1(\x) w_{N}(y),
    \\
    p_1(\x,y) &= \sum_{i=1}^{n} \left(f^0_i(\x)- \partial_{x_i}p_0(\x) \right) \pi_i(y) + g_{\Sigma}(\x) \pi_{\Gamma_D}(y) + p_{b,\Sigma}^1(\x) \pi_{N}(y), 
\end{align*}
where $(w_i,\pi_i) \in \spaceHndiv \times L^2(Y_f)$ for $i=1,\ldots,n$, and $(w_{\Gamma_D},\pi_{\Gamma_D} ) \in \spaceHndiv \times L^2(Y_f)$ solve the cell problems $\eqref{CellProblemWi}$ and $\eqref{CellProblemWGamma}$ with the additional boundary conditions
\begin{align*}
    [\pi_i I - 2D_y(w_i)]\hat{n} &= 0 &\mbox{ on }& S_N^{\pm} ,
    \\
    [\pi_{\Gamma_D} I - 2D_y(w_{\Gamma_D})]\hat{n} &= 0 &\mbox{ on }& S_N^{\pm},
\end{align*}
and $(w_N,\pi_N) \in  \spaceHndiv \times L^2(Y_f)$ is the unique weak solution of the cell problem
\begin{align}
    \begin{aligned}\label{CellProblemWN}
    -2 \nabla_y \cdot (D_y(w_N)) + \nabla_y \pi_N &= 0 &\mbox{ in }& Y_f,
\\
\nabla_y \cdot w_N &= 0 &\mbox{ in }& Y_f,
\\
w_N \cdot \hat{n} &= 0 &\mbox{ on }& \Gamma_D,
\\
2[D_y(w_N)\hat{n}]_{\tau} &= 0 &\mbox{ on }& \Gamma_D,
\\
[\pi_N I - 2D_y(w_N)]\hat{n} &= p_{b,N} \hat{n} &\mbox{ on }& S_N^{\pm},
\\
w_N &\mbox{ is } (0,1)^{n-1}\mbox{-periodic}.
\end{aligned}
\end{align}
We emphasize that here the pressures $\pi_i$, $\pi_{\Gamma_D}$, and $\pi_N$ are unique (not only up to a constant) and $w_n$ is in general not equal to zero. Defining $K$ and $\beta$ as in $\eqref{DefinitionEffectiveTensors}$ (for the modified cell problems), and 
\begin{align*}
    \kappa := \int_{Y_f} w_N dy,
\end{align*}
we obtain
\begin{align*}
     \bar{u}_0(\x) = \frac{1}{\mu} K(f(\x) - \nabla_{\x} p_0(\x)) +\frac{1}{\mu} \beta g_{\Sigma}(\x) +  \frac{1}{\mu} \kappa p_{b,\Sigma}^1(\x),
\end{align*}
and $p_0$ solves the Darcy-law $\eqref{DarcyLawAlphaZeroSNneqempty}$. This finishes the proof of Theorem \ref{MainResultAlphaZeroSNnotempty}.
We emphasize that in this case $K_{ni} = K_{in}$ and also $\beta_n$ are not zero, and $K$ is even a positive matrix. Hence, $\bar{u}_0^n \neq 0$ and we have a Darcy-flow vertical to the thin layer.

\subsection{The case $\alpha > 0$ (Proof of Theorem \ref{MainResultAlphaNotZero})}

In the following we only consider the case $S_N^{\pm}  = \emptyset$, since the other cases can be treated in a similar way with slight modifications as in Section \ref{SectionAlphaZeroSNneqempty}.
For $\alpha \neq 0$ the limit equation is also dependent on the parameter $\gamma$. While the parameter $\gamma$ has no influence on the \textit{a priori} estimates of the fluid velocity and pressure in the layer $\Omega^{\epsilon}$, the fluid velocity at the oscillating surface $\Gamma_D^{\epsilon}$ is depending on $\gamma$. In fact, due to Lemma \ref{AprioriEstimatesVelocity}, it holds that
\begin{align*}
    \Vert \ueps \Vert_{L^2(\Gamma_D^{\epsilon})} \le C 
    \begin{cases}
        \epsilon^2 &\mbox{ for } \gamma \geq -1,
        \\
        \epsilon^{\frac{3-\gamma}{2}} &\mbox{ for } \gamma \leq -1.
    \end{cases}
\end{align*}
Hence, we can expect that the behavior of the limit model changes in the critical value $\gamma = -1$. Compared to the result for $\alpha =0$, for the homogenization we only have to deal in the microscopic equation $\eqref{MicroModelNavierSlipVarEqu}$ with the additional term
\begin{align*}
 \alpha \epsilon^{\gamma} \int_{\Gamma_D^{\epsilon}} \ueps \cdot \phi^{\epsilon} d\sigma.
\end{align*}
For the derivation of the macroscopic model we tested equation $\eqref{MicroModelNavierSlipVarEqu}$ with $\epsilon^{-1} \phi\left(\x,\frac{x}{\epsilon}\right)$ for $\phi \in C^{\infty}\left(\overline{\Sigma} , \spaceHndiv\right)$. Hence, to pass to the limit $\epsilon \to 0$ we have to identify the limit of the term
\begin{align*}
    B_{\epsilon}^{\gamma}:= \alpha \epsilon^{\gamma - 1} \int_{\Gamma_D^{\epsilon}} \ueps \cdot \phi \left(\x ,\frac{x}{\epsilon}\right)d\sigma.
\end{align*}
Using the trace inequality $\eqref{TraceInequality}$ and the Poincar\'e inequality $\eqref{PoincareInequalityHn}$, it is easy to check that
\begin{align*}
    \left\Vert \phi\left(\x,\frac{x}{\epsilon}\right) \right\Vert_{L^2(\Gamma_D^{\epsilon})} \le C \left(\sup_{\overline{x}\in \Sigma} \int_{Y_f} \lvert\nabla_y\phi(\overline{x},y)\rvert^2+ \epsilon^2\lvert\nabla_{\overline{x}}\phi(\overline{x},y)\rvert^2\,dy \right)^{1/2}.
\end{align*}
Together with the \textit{a priori} estimates from Lemma \ref{AprioriEstimatesVelocity}, we obtain
\begin{align}\label{EstimatesBepsGamma}
    \vert B_{\epsilon}^{\gamma} \vert \le C 
    \begin{cases}
    \epsilon^{\gamma + 1} &\mbox{ for } \gamma > -1,
    \\
    1 &\mbox{ for } \gamma = -1,
    \\
    \epsilon^{\frac{\gamma +1}{2}} &\mbox{ for } \gamma < -1.
    \end{cases}
\end{align}
For $\gamma < -1$ we cannot control in this way the term $B_{\epsilon}^{\gamma}$.  However, in this case we  will test with functions vanishing on the boundary $\Gamma_D$. This is consistent with the fact that the two-scale limit of $\ueps$ has zero trace on $\Gamma_D$ for $\gamma < -1$, see Lemma \ref{ConvergenceResults}, and the slip condition becomes a no-slip condition in the limit.
\begin{proof}[Proof of Theorem \ref{MainResultAlphaNotZero}]\ref{MainResultAlphaNotZeroGammaBigger-1}
 Let $\gamma > -1$. The proof follows the same lines as in the case $\alpha = 0$. The only difference occurs in equation $\eqref{MicroEquationTestedOscillatingFunctions}$, where the additional term $B_{\epsilon}^{\gamma}$ occurs on the left-hand side. However, due to estimate $\eqref{EstimatesBepsGamma}$, we have
 \begin{align*}
     B_{\epsilon}^{\gamma } \rightarrow 0 \qquad \mbox{ for } \epsilon \to 0.
 \end{align*}
 Hence, we obtain again equation $\eqref{AuxiliaryEquationMacroModel1}$ and the result follows by the same calculations as before.
\end{proof}

\begin{proof}[Proof of Theorem \ref{MainResultAlphaNotZero}\ref{MainResultAlphaNotZeroGammaEqual-1}]
Let  $\gamma = -1$. In this case we obtain a contribution from the term $B_{\epsilon}^{-1}$. In fact, from the two-scale convergence of the traces in Lemma \ref{ConvergenceResults} we obtain
\begin{align*}
    B_{\epsilon}^{-1} = \alpha \frac{1}{\epsilon^2} \int_{\Gamma_D^{\epsilon}} \ueps \cdot \phi\left(\x,\dfrac{x}{\epsilon}\right) d\sigma \overset{\epsilon \to 0}{\longrightarrow} \alpha \int_{\Sigma} \int_{\Gamma_D} u_0 \cdot \phi(\x,y) d\sigma_y d\x. 
\end{align*}
Hence, the boundary condition for the tangential normal stress on $\Gamma_D$ in the two-scale homogenized problem $\eqref{TwoScaleMacroModelAlpha0}$ has to be replaced by 
\begin{align*}
    2\mu(D_y(u_0)\hat{n})_{\tau} + \alpha u_0 &= g_0 &\mbox{ on }& \Sigma \times \Gamma_D,
\end{align*}
and in the same way we have to modify the cell problems $\eqref{CellProblemWi}$ and $\eqref{CellProblemWGamma}$ (and $\eqref{CellProblemWN}$ for $S_N^{\pm} \neq \emptyset$).
\end{proof}

\begin{proof}[Proof of Theorem \ref{MainResultAlphaNotZero}\ref{MainResultAlphaNotZeroGammaSmaller-1}] Let $\gamma <-1$. From Lemma \ref{ConvergenceResults} we obtain $u_0 = 0$ on $\Gamma_D$. Now, we test equation $\eqref{MicroModelNavierSlipVarEqu}$ with $\epsilon^{-1} \phi\left(\x,\frac{x}{\epsilon}\right)$ for $\phi \in C^{\infty}\left(\overline{\Sigma}, H_{\#,0}^1(Y_f)^n\right) $ such that $\nabla_y \cdot \phi = 0$. Hence, the boundary terms on $\Gamma_D^{\epsilon}$ vanish and we obtain for $\epsilon \to 0$ the identity $\eqref{AuxiliaryEquationMacroModel1}$ without the boundary term on $\Gamma_D$. By density this equation is valid for all functions  $\phi \in L^2(\Sigma, H_{\#,0}^1(Y_f)^n)$ with $\nabla_y \cdot \phi=0$. Using again Lemma \ref{CharacterizationComplementW} and Remark \ref{RemarkOrthogonalComplement}, we obtain the desired result. 
 
\end{proof}

\section{Conclusion}

We derived Darcy-laws for Stokes flow through a thin perforated layer with a Navier-slip condition on the oscillating boundary of the perforations. For this we established multi-scale techniques combining dimension reduction and homogenization, such as the restriction operator for thin domains, a two pressure decomposition for functionals, and an $\epsilon$-uniform Korn inequality for vector fields with vanishing normal traces. We emphasize that the Korn inequality is not restricted to thin perforated layers, but can also be used for the treatment of perforated domains, which seems to be open in the literature on homogenization. 

In our paper we only considered the case that the thickness of the layer and the periodicity within the layer (as well as the size of the solid phase $Y_s$) are of order $\epsilon$. However, other scalings for the geometry are also important for applications, and we expect that our methods and results give a contribution to solve such problems.

In many applications the thin layer is coupled to other regions, like free flow domains, other thin layers leading to multi-layered systems, or porous medium with elastic properties. In this case, coupling conditions between the different regions are needed and other external boundary conditions are necessary. The multi-scale techniques developed in this paper might be an important step in the treatment of such more complex geometrical structures including thin layers. However, such problems lead to additional difficulties like the choice of test-functions, and often one has to take into account other concepts like boundary layers.

\appendix

\section{Two-scale convergence in thin domains}
\label{SectionTSConvergence}

We briefly introduce two-scale convergence concepts for thin layers \cite{BhattacharyaGahnNeussRadu,NeussJaeger_EffectiveTransmission}, and recall the compactness results used in this paper.
\begin{dfn}\
\begin{enumerate}
[label = (\roman*)]
\item\,[Two-scale convergence in the thin layer $\oem$] We say the sequence $\weps \in L^2( \oem)$ converges (weakly) in the two-scale sense to a limit function $w_0 \in L^2(\Sigma \times Y)$ if 
\begin{align*}
\lim_{\epsilon\to 0} \foe \int_{\oem} \weps(x) \phi \bxfxe dx = \int_{\Sigma} \int_Y w_0(\x,y) \psi(\x,y) dy d\x 
\end{align*}
for all $\phi \in L^2(\Sigma,C_{\#}^0(\overline{Y}))$. We write 
\begin{align*}
\weps \rats w_0.
\end{align*}
\item\,[Two-scale convergence on the oscillating surface $\Gamma_D^{\epsilon}$] We say the sequence $\weps \in L^2( \Gamma_D^{\epsilon})$ converges (weakly) in the two-scale sense to a limit function $w_0 \in L^2( \Sigma \times \Gamma_D)$ if 
\begin{align*}
\lim_{\epsilon\to 0}   \int_{\Gamma_D^{\epsilon}} \weps(x) \phi \bxfxe d\sigma  =  \int_{\Sigma} \int_{\Gamma_D} w_0(\x,y) \psi(\x,y) d\sigma_y d\x 
\end{align*}
for all $\phi \in C^0(\overline{\Sigma},C_{\#}^0(\Gamma_D))$.  We write  
\begin{align*}
\weps \rats w_0 \qquad \mbox{on } \Gamma_D^{\epsilon}.
\end{align*}
\end{enumerate}
\end{dfn}
The following lemma gives basic compactness results for the two-scale convergence in thin layers. 
\begin{lem}\label{LemmaBasicTSCompactness}\
\begin{enumerate}
[label = (\roman*)]
\item Let $\weps \in H^1(\oem)$ be a sequence with
\begin{align*}
\frac{1}{\sqrt{\epsilon}}\Vert \weps \Vert_{L^2( \oem)} + \sqrt{\epsilon}\Vert \nabla \weps \Vert_{L^2( \oem)}  \le C.
\end{align*}
Then there exists a subsequence (again denoted $\weps$) and a limit function $w_0 \in  L^2(  \Sigma, H_{\#}^1(Y))$ such that the following two-scale convergences hold
\begin{align*}
\weps  &\rats w_0,
\\
\epsilon \nabla \weps &\rats \nabla_y w_0.
\end{align*}
\item  Consider the sequence $\weps \in L^2( \Gamma_D^{\epsilon})$ with
\begin{align*}
\Vert \weps \Vert_{L^2(\Gamma_D^{\epsilon})} \le C.
\end{align*}
Then there exists a subsequence (again denoted $\weps$) and a limit function $w_0 \in  L^2(  \Sigma\times \Gamma_D)$ such that  
\begin{align*}
\weps \rats w_0 \qquad \mbox{on } \Gamma_D^{\epsilon}.
\end{align*}
\end{enumerate}
\end{lem}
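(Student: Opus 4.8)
The plan is to run the standard functional-analytic proof of two-scale compactness (in the spirit of \cite{Allaire_TwoScaleKonvergenz}) with the thin-layer scaling built in; the only inputs specific to the thin perforated geometry are the two ``oscillation'' identities
\begin{align*}
\foe\int_{\oem}\Bigl|\phi\bxfxe\Bigr|^{2}\,dx &\;\longrightarrow\; \int_{\Sigma}\int_{Y}|\phi(\x,y)|^{2}\,dy\,d\x ,
\\
\int_{\Gamma_D^{\eps}}\Bigl|\phi\bxfxe\Bigr|^{2}\,d\sigma &\;\longrightarrow\; \int_{\Sigma}\int_{\Gamma_D}|\phi(\x,y)|^{2}\,d\sigma_y\,d\x ,
\end{align*}
valid for $\phi\in L^{2}(\Sigma,C_{\#}^{0}(\overline{Y}))$ resp.\ $\phi\in C^{0}(\overline{\Sigma},C_{\#}^{0}(\Gamma_D))$. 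They follow from a periodic Riemann-sum argument based on the cell structure (using $|\oem|=\mathcal O(\eps)$ and $|\Gamma_D^{\eps}|=\mathcal O(1)$), exactly as in \cite{MarusicMarusicPalokaTwoScaleConvergenceThinDomains,NeussJaeger_EffectiveTransmission,BhattacharyaGahnNeussRadu}, and I would quote them as preliminaries.

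For part~(i) I first produce the two-scale limits. The linear maps $T_{\eps}(\phi):=\foe\int_{\oem}\weps\,\phi\bxfxe\,dx$ on the separable space $L^{2}(\Sigma,C_{\#}^{0}(\overline{Y}))$ satisfy, by Cauchy--Schwarz together with the hypothesis $\eps^{-1/2}\Vert\weps\Vert_{L^{2}(\oem)}\le C$ and the first oscillation identity, the uniform bound $|T_{\eps}(\phi)|\le C\Vert\phi\Vert_{L^{2}(\Sigma\times Y)}$. A diagonal extraction over a countable dense subset then gives a subsequence with $T_{\eps}(\phi)\to T(\phi)$ for every $\phi$, with $|T(\phi)|\le C\Vert\phi\Vert_{L^{2}(\Sigma\times Y)}$; density of $L^{2}(\Sigma,C_{\#}^{0}(\overline{Y}))$ in $L^{2}(\Sigma\times Y)$ and Riesz representation then yield $w_0\in L^{2}(\Sigma\times Y)$ with $\weps\rats w_0$. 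Applying the same reasoning to $\eps\nabla\weps$, which obeys $\eps^{-1/2}\Vert\eps\nabla\weps\Vert_{L^{2}(\oem)}\le C$, we obtain, after a further subsequence, $\xi_0\in L^{2}(\Sigma\times Y)^{n}$ with $\eps\nabla\weps\rats\xi_0$.

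It then remains to identify $\xi_0=\nabla_y w_0$, which also places $w_0$ in $L^{2}(\Sigma,H_{\#}^{1}(Y))$. I would test $\eps\nabla\weps$ against $\phi\bxfxe$ with $\phi$ smooth, compactly supported in $\Sigma$ in the slow variable, $(0,1)^{n-1}$-periodic in the first $n-1$ components of the fast variable, and with last component $\phi_n$ compactly supported in the $y_n$-variable. Since $\nabla\bigl[\phi\bxfxe\bigr]=(\nabla_{\x}\phi)\bxfxe+\foe(\nabla_y\phi)\bxfxe$ and all boundary integrals over $\partial\oem$ vanish — the lateral ones because $\phi\equiv0$ near $\partial\Sigma$, those on $\Sigma\times\{\pm\eps\}$ because there $\phi\cdot\hat n=\pm\phi_n=0$ — integration by parts gives
\begin{align*}
\foe\int_{\oem}\eps\nabla\weps\cdot\phi\bxfxe\,dx = -\foe\int_{\oem}\weps\,\Bigl[\eps(\nabla_{\x}\cdot\phi)\bxfxe+(\nabla_y\cdot\phi)\bxfxe\Bigr]\,dx .
\end{align*}
The summand carrying the extra factor $\eps$ is $\mathcal O(\eps)$ by Cauchy--Schwarz and the bound on $\weps$, so in the limit $\int_{\Sigma}\int_{Y}\xi_0\cdot\phi = -\int_{\Sigma}\int_{Y}w_0\,(\nabla_y\cdot\phi)$ for all such $\phi$. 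Reading this componentwise yields $\partial_{y_k}w_0=\xi_0^{(k)}$ in $L^{2}$ for $k=1,\dots,n-1$ with $(0,1)^{n-1}$-periodicity (from the test class) and $\partial_{y_n}w_0=\xi_0^{(n)}$ in $L^{2}$ (the test functions vanishing near $y_n=\pm1$), hence $w_0(\x,\cdot)\in H_{\#}^{1}(Y)$ with $\nabla_y w_0=\xi_0$ and $w_0\in L^{2}(\Sigma,H_{\#}^{1}(Y))$. If one prefers to allow $\phi_n$ arbitrary, the additional surface term on $\Sigma\times\{\pm\eps\}$ is controlled by $\sqrt{\eps}\Vert\weps\Vert_{L^{2}(\partial\oem)}\le C$, which follows from \eqref{TraceInequality}.

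Part~(ii) is only the functional-analytic step: the functionals $\phi\mapsto\int_{\Gamma_D^{\eps}}\weps\,\phi\bxfxe\,d\sigma$ on the separable space $C^{0}(\overline{\Sigma},C_{\#}^{0}(\Gamma_D))$ are bounded by $\Vert\weps\Vert_{L^{2}(\Gamma_D^{\eps})}\Vert\phi\bxfxe\Vert_{L^{2}(\Gamma_D^{\eps})}\le C$ thanks to the second oscillation identity and the hypothesis $\Vert\weps\Vert_{L^{2}(\Gamma_D^{\eps})}\le C$, and the same extraction/density/Riesz scheme produces $w_0\in L^{2}(\Sigma\times\Gamma_D)$ with $\weps\rats w_0$ on $\Gamma_D^{\eps}$. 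I expect the main obstacle to be the careful treatment of the boundary of the thin cell in the identification of $\nabla_y w_0$ (handled by the choice of test functions above) and, underlying everything, the proof of the two oscillation identities under the simultaneous refinement of the periodic cell and shrinking of the layer; both are classical for thin periodic structures, so I would invoke the cited works rather than reprove them.
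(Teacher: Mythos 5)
Your argument is correct and is precisely the standard two-scale compactness proof: the oscillation (Riemann-sum) identities, uniform boundedness of the linear functionals plus diagonal extraction over a countable dense set and Riesz representation, and identification of $\xi_0=\nabla_y w_0$ by integration by parts against oscillating test fields whose lateral support and vanishing normal component on $\Sigma\times\{\pm\eps\}$ kill the boundary terms. The paper does not reprove the lemma but simply defers to \cite{NeussJaeger_EffectiveTransmission} for part (i) and \cite{BhattacharyaGahnNeussRadu} for part (ii), and those references follow exactly the route you describe, so your proposal matches the intended proof.
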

\begin{proof}
 For the first result we refer to \cite{NeussJaeger_EffectiveTransmission}. A proof for the second result can be found in \cite{BhattacharyaGahnNeussRadu} for surfaces of thin channels. However, the proof is valid with the same arguments for our geometrical setting.
\end{proof}
Since we are working on perforated domains $\Omega^{\epsilon}$, for an application of the two-scale convergence we have to extend the functions to the whole layer $\Omega_{\epsilon}^M$. The easiest way is to use the zero extension, but we loose the spatial regularity. Using the results from \cite{Acerbi1992}, we obtain for every $\weps \in H^1(\Omega^{\epsilon})$ the existence of an extension $\tweps\in H^1(\Omega_{\epsilon}^M)$, such that
\begin{align*}
    \Vert \tweps \Vert_{L^2(\Omega_{\epsilon}^M)} \le C \Vert \weps \Vert_{L^2(\Omega^{\epsilon})}, \qquad \Vert \nabla \tweps \Vert_{L^2(\Omega_{\epsilon}^M)} \le C \Vert \nabla \tweps \Vert_{L^2(\Omega^{\epsilon})}.
\end{align*}
\begin{lem}\label{TwoScaleCompactnessPerforated}
Let $\weps \in H^1(\Omega^{\epsilon})$ be a sequence with 
\begin{align*}
    \frac{1}{\sqrt{\epsilon}}\Vert \weps \Vert_{L^2( \Omega^{\epsilon})} + \sqrt{\epsilon}\Vert \nabla \weps \Vert_{L^2( \Omega^{\epsilon})}  \le C.
\end{align*}
Then, there exists $w_0 \in L^2(\Sigma, H_{\#}^1(Y_f))$, such that up to a subsequence it holds that 
\begin{align*}
   \chi_{\Omega^{\epsilon}} \weps  &\rats \chi_{Y_f} w_0,
\\
\chi_{\Omega^{\epsilon}}\epsilon \nabla \weps &\rats \chi_{Y_f} \nabla_y w_0,
\\
\weps\vert_{\Gamma_D^{\epsilon}} &\rats w_0\vert_{\Gamma_D}.
\end{align*}
\end{lem}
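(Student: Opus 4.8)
The plan is to combine the $\epsilon$-uniform extension operator of Acerbi et al.\ \cite{Acerbi1992} with the two-scale compactness results for thin layers from Lemma~\ref{LemmaBasicTSCompactness}. First I would apply the extension result quoted just before the statement: given $\weps \in H^1(\Omega^\eps)$ with the stated $\epsilon$-uniform bound, we obtain $\tweps \in H^1(\Omega_\eps^M)$ with $\Vert \tweps\Vert_{L^2(\Omega_\eps^M)} \le C\Vert\weps\Vert_{L^2(\Omega^\eps)}$ and $\Vert\nabla\tweps\Vert_{L^2(\Omega_\eps^M)} \le C\Vert\nabla\weps\Vert_{L^2(\Omega^\eps)}$, so that $\tweps$ inherits the bound $\eps^{-1/2}\Vert\tweps\Vert_{L^2(\Omega_\eps^M)} + \eps^{1/2}\Vert\nabla\tweps\Vert_{L^2(\Omega_\eps^M)} \le C$. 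By part~(i) of Lemma~\ref{LemmaBasicTSCompactness} there is a subsequence and a limit $\tilde w_0 \in L^2(\Sigma, H^1_\#(Y))$ with $\tweps \rats \tilde w_0$ and $\eps\nabla\tweps \rats \nabla_y \tilde w_0$. Since $\chi_{\Omega^\eps}\weps = \chi_{\Omega^\eps}\tweps$ and $\chi_{\Omega^\eps}\nabla\weps = \chi_{\Omega^\eps}\nabla\tweps$, and since the characteristic functions $\chi_{\eps E_f}(x)$ two-scale converge (strongly, as periodic coefficients) to $\chi_{Y_f}(y)$, I would pass to the limit in the products to get $\chi_{\Omega^\eps}\weps \rats \chi_{Y_f}\tilde w_0$ and $\chi_{\Omega^\eps}\eps\nabla\weps \rats \chi_{Y_f}\nabla_y\tilde w_0$. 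Setting $w_0 := \tilde w_0\vert_{Y_f}$ (which lies in $L^2(\Sigma, H^1_\#(Y_f))$) yields the first two convergences.

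For the trace convergence, the natural route is to bound $\Vert\weps\Vert_{L^2(\Gamma_D^\eps)}$ by the trace inequality \eqref{TraceInequality}, which gives $\sqrt\eps\,\Vert\weps\Vert_{L^2(\Gamma_D^\eps)} \le C(\Vert\weps\Vert_{L^2(\Omega^\eps)} + \eps\Vert\nabla\weps\Vert_{L^2(\Omega^\eps)}) \le C\eps$, hence $\Vert\weps\Vert_{L^2(\Gamma_D^\eps)} \le C\sqrt\eps \le C$; part~(ii) of Lemma~\ref{LemmaBasicTSCompactness} then produces a further subsequence and some limit $v_0 \in L^2(\Sigma\times\Gamma_D)$ with $\weps\vert_{\Gamma_D^\eps} \rats v_0$. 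It then remains to identify $v_0 = w_0\vert_{\Gamma_D}$. Here I would test the surface two-scale limit against $\phi\in C^0(\overline\Sigma, C^1_\#(\overline{Y_f}))$ and use the microscopic divergence/trace identity: for such $\phi$, writing $\phi^\eps(x) = \phi(\bar x, x/\eps)$, an integration by parts on each perforated cell relates $\int_{\Gamma_D^\eps}\weps\,(\phi^\eps\cdot\hat n)\,d\sigma$ to $\int_{\Omega^\eps}(\weps\,\nabla\cdot\phi^\eps + \nabla\weps\cdot\phi^\eps)\,dx$ plus a boundary term on $\Gamma_N^\eps$ (or $S_N^{\pm,\eps}$). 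Passing to the two-scale limit in the volume terms (using the first two convergences) and in the surface term, and comparing with the same identity satisfied by the limit $\tilde w_0$ on $Y_f$ via the divergence theorem in $Y_f$, forces $v_0(\bar x,\cdot) = \tilde w_0(\bar x,\cdot)\vert_{\Gamma_D}$ for a.e.\ $\bar x$. An alternative, perhaps cleaner, identification is to quote directly the corresponding compactness-with-trace statement in \cite{NeussJaeger_EffectiveTransmission,BhattacharyaGahnNeussRadu}, whose proofs already cover the simultaneous volume–surface two-scale convergence for functions with uniformly bounded $H^1$-norm on perforated thin layers.

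The main obstacle I expect is precisely this last identification of the surface limit with the trace of the volume limit: two-scale convergence on $\Gamma_D^\eps$ and in $\Omega^\eps$ are a priori unrelated, so one genuinely needs either the integration-by-parts argument sketched above or a direct appeal to the extended continuity-of-trace result. The slightly delicate point in the integration-by-parts approach is handling the outer boundary contributions on $\Gamma_N^\eps$ (they vanish in the interior but not near $\partial\Sigma\times(-\eps,\eps)$); choosing test functions $\phi$ that are compactly supported in $\Sigma$ in the macroscopic variable removes these terms and suffices to identify $v_0$ a.e.\ in $\Sigma$. Everything else — the extension, the application of Lemma~\ref{LemmaBasicTSCompactness}, and the passage to the limit in products with the periodic characteristic function — is routine. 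A final remark: the limit $w_0$ inherits $(0,1)^{n-1}$-periodicity from $H^1_\#$ automatically, so no extra work is needed there.
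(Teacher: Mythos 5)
Your proposal follows essentially the same route as the paper's (sketched) proof: extend $\weps$ to $\Omega_\eps^M$ via the Acerbi et al.\ result, apply Lemma~\ref{LemmaBasicTSCompactness} together with the trace inequality \eqref{TraceInequality} to obtain the volume and surface two-scale limits, and identify the surface limit with the trace of $w_0$ by integration by parts. (One minor arithmetic slip: with the stated bounds the trace inequality yields $\sqrt{\eps}\,\Vert\weps\Vert_{L^2(\Gamma_D^\eps)}\le C\sqrt{\eps}$, not $C\eps$, but the resulting uniform bound $\Vert\weps\Vert_{L^2(\Gamma_D^\eps)}\le C$ is exactly what part~(ii) of Lemma~\ref{LemmaBasicTSCompactness} requires.)
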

\begin{proof}
 Such kind of results are well known in the two-scale convergence theory. However, for the sake of completeness we give a sketch for the proof. From the scaled trace inequality $\eqref{TraceInequality}$ and the bounds for the extension $\tweps$, we obtain
 \begin{align*}
     \Vert \tweps \Vert_{L^2(\Gamma_D^{\epsilon})} + \frac{1}{\sqrt{\epsilon}} \Vert \tweps \Vert_{L^2(\Omega_{\epsilon}^M)} + \sqrt{\epsilon} \Vert \nabla \tweps \Vert_{L^2(\Omega_{\epsilon}^M)} \le C.
 \end{align*}
 Hence, the convergence results for $\chi_{\Omega^{\epsilon}} \weps$ and $\chi_{\Omega^{\epsilon}} \epsilon \nabla \weps $ follow directly from Lemma \ref{LemmaBasicTSCompactness}. Further, we obtain that the trace of $\weps$ also converges (up to a subsequence) in the two-scale sense, and the limit function can easily be identified with the trace of $w_0$ by integration by parts.
\end{proof}

\section{The Bogovski\u{i} operator}
\label{SectionAppendixBogovskii}

In the following we denote by $\Omega \subset \R^n$ an arbitrary bounded Lipschitz domain and  $\Gamma \subset \partial \Omega$.
A famous result of Bogovski\u{i} \cite{BogovskiSolutionFirstBVP}, see also \cite{BorchersSohr}, says that there exists a linear and bounded operator $B:L^2_0(\Omega) \rightarrow H_0^1(\Omega)^n$ ($L^2_0(\Omega)$ is the space of $L^2$-functions with mean value zero) such that
\begin{align*}
    \nabla \cdot Bf = f. \qquad \mbox{for all } f \in L^2_0(\Omega).
\end{align*}
This implies the following result (see $\eqref{SpaceZeroTracesPartBoundary}$ for the definition of the space $H^1(\Omega,\Gamma)$):

\begin{lem}\label{LemmaBogovskii}
Let $\vert \partial \Omega  - \Gamma \vert > 0 $. Then there exists a linear and bounded operator $B_{\Gamma}: L^2(\Omega) \rightarrow H^1(\Omega,\Gamma)^n$, such that
\begin{align*}
    \nabla \cdot B_{\Gamma} f = f \qquad \mbox{for all } f \in L^2(\Omega).
\end{align*}
\end{lem}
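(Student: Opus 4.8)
The plan is to reduce everything to the classical Bogovski\u{\i} operator $B\colon L^2_0(\Omega)\to H_0^1(\Omega)^n$ by peeling off the mean value of $f$ and absorbing it into one fixed vector field that is adapted to $\Gamma$. Concretely, I would first construct, once and for all, a field $v_*\in H^1(\Omega)^n$ with $v_*=0$ on $\Gamma$ and $\int_\Omega\nabla\cdot v_*\,dx=1$, and then for arbitrary $f\in L^2(\Omega)$ set
\[
B_\Gamma f:=B\bigl(f-\lambda(f)\,\nabla\cdot v_*\bigr)+\lambda(f)\,v_*,\qquad \lambda(f):=\int_\Omega f\,dx .
\]

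The construction of $v_*$ is the only place where the hypothesis $\lvert\partial\Omega-\Gamma\rvert>0$ is used, and it is the step I expect to require the most care. Since $\partial\Omega$ is Lipschitz and $\lvert\partial\Omega-\Gamma\rvert>0$, one can pick a ball $B$ centred at a surface density point of $\partial\Omega-\Gamma$ (in all the cases actually occurring in the paper — $\Gamma=S^{\pm}$ or $\Gamma=\Gamma_D$ or a face of $Y$ — one may simply take $\overline{B}\cap\Gamma=\emptyset$) such that, up to a null set, $\partial\Omega\cap B$ is a single Lipschitz graph contained in $\partial\Omega-\Gamma$; in suitable coordinates the outward normal $\hat n$ there has a component of fixed sign, say $\hat n\cdot e_n\le -c<0$. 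Taking $\chi\in C_0^\infty(B)$ with $\chi\ge0$ and $\chi\not\equiv0$ on $\partial\Omega\cap B$, the field $\chi\,e_n$ belongs to $C_0^\infty(B)^n$, hence vanishes on $\partial\Omega\setminus B\supseteq\Gamma$, and by the divergence theorem $\int_\Omega\nabla\cdot(\chi e_n)\,dx=\int_{\partial\Omega\cap B}\chi\,(e_n\cdot\hat n)\,d\sigma<0$; rescaling gives the desired $v_*$.

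With $v_*$ in hand the rest is bookkeeping. For $f\in L^2(\Omega)$ the function $g(f):=f-\lambda(f)\,\nabla\cdot v_*$ has $\int_\Omega g(f)\,dx=0$, so $g(f)\in L^2_0(\Omega)$ and the classical Bogovski\u{\i} operator yields $B\,g(f)\in H_0^1(\Omega)^n$ with $\nabla\cdot\bigl(B\,g(f)\bigr)=g(f)$ and $\lVert B\,g(f)\rVert_{H^1(\Omega)}\le C\lVert g(f)\rVert_{L^2(\Omega)}$. Then $B_\Gamma$ is linear (both $\lambda$ and $B$ are), $\nabla\cdot(B_\Gamma f)=g(f)+\lambda(f)\nabla\cdot v_*=f$, and the trace of $B_\Gamma f$ vanishes on $\Gamma$ because $B\,g(f)\in H_0^1(\Omega)^n$ has zero trace on all of $\partial\Omega$ and $v_*$ vanishes on $\Gamma$; hence $B_\Gamma f\in H^1(\Omega,\Gamma)^n$. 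Boundedness follows from $\lvert\lambda(f)\rvert\le\lvert\Omega\rvert^{1/2}\lVert f\rVert_{L^2(\Omega)}$, so that $\lVert g(f)\rVert_{L^2(\Omega)}\le(1+\lvert\Omega\rvert^{1/2}\lVert\nabla\cdot v_*\rVert_{L^2(\Omega)})\lVert f\rVert_{L^2(\Omega)}$ and consequently $\lVert B_\Gamma f\rVert_{H^1(\Omega)}\le\lVert B\,g(f)\rVert_{H^1(\Omega)}+\lvert\lambda(f)\rvert\,\lVert v_*\rVert_{H^1(\Omega)}\le C\lVert f\rVert_{L^2(\Omega)}$.

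I expect the main obstacle to be exactly the construction of $v_*$, i.e. exhibiting a single $H^1$ field that simultaneously vanishes on $\Gamma$ and carries nonzero net flux through $\partial\Omega-\Gamma$; for merely measurable $\Gamma$ this needs the Lebesgue density theorem and a Lipschitz chart as above, whereas for the sets $\Gamma$ relevant in this paper it is immediate. Alternatively, one can bypass the explicit $v_*$ by the observation that the range of $\nabla\cdot\colon H^1(\Omega,\Gamma)^n\to L^2(\Omega)$ already contains the codimension-one subspace $L^2_0(\Omega)$ (via the classical Bogovski\u{\i} operator, since $H_0^1(\Omega)^n\subset H^1(\Omega,\Gamma)^n$), so it is all of $L^2(\Omega)$ as soon as one such $v_*$ with $\int_\Omega\nabla\cdot v_*\neq0$ exists, and the bounded right inverse is then furnished by the closed range and open mapping theorems.
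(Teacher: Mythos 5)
Your argument is correct and complete for the situations in which the lemma is actually invoked, but it is worth noting that the paper does not prove this statement at all: it simply refers to \cite[Theorem 5.4]{fabricius2017stokes}, so your write-up is a genuine self-contained substitute rather than a variant of an in-paper argument. The core device --- peel off the mean with a fixed field $v_*\in H^1(\Omega,\Gamma)^n$ carrying unit net flux and feed the mean-zero remainder to the classical Bogovski\u{\i} operator --- is the standard way to pass from $L^2_0(\Omega)$ to $L^2(\Omega)$, and your verification of linearity, the divergence identity, the trace condition and the operator bound is airtight. The only soft spot is exactly the one you flag: for a merely measurable $\Gamma$ with $\lvert\partial\Omega-\Gamma\rvert>0$, a surface density point of $\partial\Omega-\Gamma$ does \emph{not} give a ball $B$ with $\partial\Omega\cap B$ contained in $\partial\Omega-\Gamma$ up to a null set (density tends to $1$, it need not equal $1$), so the field $\chi e_n$ could fail to vanish on $\Gamma\cap B$; for such pathological $\Gamma$ the existence of any $v_*\in H^1(\Omega,\Gamma)^n$ with nonvanishing flux is a nontrivial trace/capacity question. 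Since every $\Gamma$ occurring in the paper ($\Gamma_D$, $S^\pm$, faces of $Y$) is a relatively open Lipschitz piece whose complement in $\partial\Omega$ contains a relatively open set, your construction of $v_*$ goes through there without the density-point detour, and your closing remark --- that surjectivity plus the closed range and open mapping theorems already yield the bounded right inverse once a single $v_*$ with $\int_\Omega\nabla\cdot v_*\neq 0$ is exhibited --- is a clean way to package the conclusion.
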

\begin{proof}
 A proof can be found in \cite[Theorem 5.4]{fabricius2017stokes}.
\end{proof}

%

\bibliographystyle{abbrv}
\bibliography{Literature_ThinDomainsPressureBC}
\end{document}